\documentclass[11pt]{article}

\usepackage{tikz,amsthm,amsmath,amstext,amssymb,epsfig,euscript, mathrsfs, dsfont, enumerate,multicol,graphics,graphicx,times}
\usepackage{hyperref}

\def\ve{\varepsilon}

\def\sothat{\mbox{ s.t. }}



\def\bC{{\mathbb{C}}}
\def\bD{{\mathbb{D}}}
\def\bR{{\mathbb{R}}}

\renewcommand{\d}{{\partial}}

\def\cB{{\mathscr{B}}}
\def\cC{{\mathscr{C}}}

\def\cH{{\mathscr{H}}}
\def\cI{{\mathscr{I}}}

\def\cN{{\mathscr{N}}}

\def\cQ{{\mathscr{Q}}}





\def\star{({\bf C1})\ }


\newcommand{\ps}[1]{\left( #1 \right)}

\newcommand{\cnj}[1]{\overline{#1}}
\newcommand{\isif}[1]{\left\{\begin{array}{cc} #1
\end{array}\right.}

\def\dist{\mbox{dist}}


\def\otimes{\otimes\cdots\otimes}

\def\lec{\lesssim}
\def\gec{\gtrsim}

\def\barintgerm_#1{\mathchoice
{\mathop{\vrule width 6pt height 3 pt depth -2.5pt
\kern -8.8pt \intop}\nolimits_{#1}}%
{\mathop{\vrule width 5pt height 3 pt depth -2.6pt
\kern -6.5pt \intop}\nolimits_{#1}}%
{\mathop{\vrule width 5pt height 3 pt depth -2.6pt
\kern -6pt \intop}\nolimits_{#1}}%
{\mathop{\vrule width 5pt height 3 pt depth -2.6pt \kern -6pt
\intop}\nolimits_{#1}}}



\def\diam{\mbox{diam}}


\newcommand{\bc}{\begin{center}}
\newcommand{\ec}{\end{center}}
\newcommand{\be}{\begin{equation}}
\newcommand{\ee}{\end{equation}}
\newcommand{\bd}{\begin{description}}
\newcommand{\ed}{\end{description}}

\newcommand{\ei}{\end{itemize}}
\newcommand{\bn}{\begin{enumerate}}
\newcommand{\en}{\end{enumerate}}
\newcommand{\ba}{\begin{array}}
\newcommand{\ea}{\end{array}}

\newcommand{\bt}{\begin{tabular}}
\newcommand{\et}{\end{tabular}}


\newcounter{prop}
\setcounter{prop}{0}

\theoremstyle{plain}
\newtheorem{theorem}{Theorem}[section]

\newtheorem{lemma}[theorem]{Lemma}
\newtheorem{conjecture}[theorem]{Conjecture}
\newtheorem{proposition}[theorem]{Proposition}
\newtheorem{corollary}[theorem]{Corollary}
\newtheorem{remark}[theorem]{Remark}

\numberwithin{equation}{section}

\def\Claim{ {\bf Claim: }}

 \newcommand{\pic}[4]{
 \begin{figure*}[h]
 \begin{center}
{\includegraphics[width=#1]{#2}}
 \caption{#3}\label{fig:#4}
\end{center}
\end{figure*}}


\newcommand{\typezero}{Flat-Good\,}
\newcommand{\typeone}{Flat-Bad\,}
\newcommand{\typetwo}{Non-Flat\,}

\begin{document}

\pagestyle{myheadings}

\title{How to take shortcuts in Euclidean space: making a given set into  a short quasi-convex set}
\author{
Jonas Azzam\footnote{
	Dep. of Mathematics,
	Univ. of Washington,
	Box 354350,
	Seattle, WA 98195-4350,
	U.S.A.
} \ and Raanan Schul\footnote{
	Dep. of Mathematics,
	Stony Brook Univ.,
	Stony Brook, NY 11794-3651,
	U.S.A.}}
\maketitle

\begin{abstract}
For a given connected set $\Gamma$ in $d-$dimensional Euclidean space,
we construct a connected set $\tilde\Gamma\supset \Gamma$ such that the two sets have comparable Hausdorff length, and the set  $\tilde\Gamma$ has the property that it is quasiconvex, i.e.
any two points $x$ and $y$ in $\tilde\Gamma$ can be connected via a path, all of which is in $\tilde\Gamma$, which has length bounded by a fixed constant multiple of the Euclidean distance between $x$ and $y$.
Thus, for any set $K$ in $d-$dimensional Euclidean space we have a set $\tilde\Gamma$ as above such that $\tilde\Gamma$ has comparable Hausdorff length
to a shortest connected set containing $K$. 
Constants appearing here depend only on the ambient dimension $d$.
In the case where  $\Gamma$  is Reifenberg flat, our constants are also independent the dimension $d$, and in this case,  our theorem holds for $\Gamma$ in an infinite dimensional Hilbert space.
This work  closely related to  $k-$spanners, which appear in computer science.\\
{\bf Mathematics Subject Classification (2000):} 28A75\\ 
{\bf Keywords:} chord-arc, quasiconvex, k-spanner, traveling salesman.

\end{abstract}

\tableofcontents


\section{Statement of main theorem}
For a 
curve $\gamma$ in $\bR^d$, let $\ell(\gamma)$ denote the arclength of $\gamma$.
For a set $K\subset \bR^d$, 
let  $\cH^1(K)$ denote the $1-$dimensional Hausdorff measure of $K$.
We prove the following theorem.
\begin{theorem}\label{t:main-theorem}
Let $d\geq 2$. There exist constants $C_1,C_2>1$, $C_{1}$ depending on $d$, such that for any subset $K\subset \bR^{d}$ there exists a connected set $\tilde\Gamma\subset\bR^d$ such that:
\begin{enumerate}
\item[(i)] 
$\tilde\Gamma\supset K$.
\item[(ii)] 
$\cH^{1}(\tilde\Gamma)\leq C_1 \cH^{1}(\Gamma)$ for any  connected $\Gamma\supset K$.
\item[(iii)] 
For any $x,y\in \tilde\Gamma$ there is a path connecting $x$ and $y$, $\gamma_{x,y}\subset \tilde\Gamma$, with 
$$\ell(\gamma_{x,y})\leq C_2|x-y|\,.$$ 
\end{enumerate}
\end{theorem}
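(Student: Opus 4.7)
The plan is to reduce the problem to the following statement: given any connected set $\Gamma\subset\bR^d$ with $\cH^1(\Gamma)<\infty$, one can build a quasiconvex enlargement $\tilde\Gamma\supset\Gamma$ with $\cH^1(\tilde\Gamma)\lec\cH^1(\Gamma)$, the constant depending only on $d$. Applied to a $\Gamma$ that nearly achieves the infimum in (ii), this yields the theorem; if no connected $\Gamma\supset K$ has finite length, condition (ii) is vacuous and one can take $\tilde\Gamma$ to be, say, a closed ball containing $K$. So from now on assume a fixed connected $\Gamma$ of finite length is given.

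For the construction, at each integer scale $k$ pick a maximal $2^{-k}$-separated net $N_k\subset\Gamma$; connectedness of $\Gamma$ yields $|N_k|\lec 2^{k}\cH^1(\Gamma)$ by a standard covering argument. Fix a large dimensional constant $\lambda$, and call a pair $x,y\in N_k$ with $|x-y|\lec 2^{-k}$ a \emph{candidate} if the $\Gamma$-geodesic distance $d_\Gamma(x,y)$ exceeds $\lambda|x-y|$. Let $\tilde\Gamma$ be the union of $\Gamma$ with the line segment $[x,y]$ for every candidate pair at every scale $k$. In regions where $\Gamma$ is already quasiconvex at scale $2^{-k}$, no segment is added; new segments are placed only where $\Gamma$ makes an inefficient detour.

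The central technical step, and the main obstacle, is to show $\sum|x-y|\lec\cH^1(\Gamma)$ where the sum ranges over candidates. This is where the three-part classification suggested by the macros \typezero, \typeone, \typetwo in the preamble would come in: attach to each dyadic ball $B$ of radius $\sim 2^{-k}$ centered on $\Gamma$ a type according to how well $\Gamma\cap 2B$ is approximated by a line and, if so, whether it tracks that line essentially once (\typezero) or makes large detours (\typeone); otherwise the ball is \typetwo. No candidate has both endpoints inside a \typezero ball, so these are discarded. For a \typetwo ball one has $\cH^1(\Gamma\cap 2B)\gec\diam(B)$, so the candidate cost at $B$ can be charged directly to $\cH^1(\Gamma\cap 2B)$, and a bounded-overlap sum over such balls contributes $\lec\cH^1(\Gamma)$. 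For a \typeone ball, the candidate length is comparable to the deviation of $\Gamma$ from its best approximating line inside $2B$, i.e.\ to $\beta_\Gamma(2B)^2\diam(B)$, and the Jones/Okikiolu traveling salesman inequality $\sum\beta^2\cdot\diam\lec\cH^1(\Gamma)$ closes the argument. The subtle point throughout is to organize the charging so no piece of $\Gamma$ is used more than a bounded number of times across scales, which forces a Carleson packing estimate on the set of bad $(B,k)$ at each location.

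Finally, to verify (iii), given $x,y\in\tilde\Gamma$ with $|x-y|=r$, pick $k$ with $2^{-k}\sim r$ and net points $x_k,y_k\in N_k$ within $2^{-k}$ of $x,y$. Because $|x_k-y_k|\lec r$, a bounded number of candidate shortcut edges, or short $\Gamma$-geodesics where the relevant pair is not a candidate (hence $d_\Gamma\lec|\cdot|$), link $x_k$ to $y_k$ inside $\tilde\Gamma$. A downward induction on scale, using nets $N_{k+1},N_{k+2},\ldots$, converts the local approaches $x\leftrightarrow x_k$ and $y\leftrightarrow y_k$ into geometric series of shortcut hops summing to length $\lec r$. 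Concatenating produces a path in $\tilde\Gamma$ from $x$ to $y$ of length $\lec r$, with dimensional constant $C_2=C_2(d)$.
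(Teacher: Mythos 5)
Your proposal captures the broad strategy — fix $\Gamma$, add shortcuts at bad scales, pay for them with the traveling-salesman sum, then route between any two points by a multiscale induction — but there are three genuine gaps, each of which is precisely a place where the paper has to do substantial work.

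First, adding straight segments $[x,y]$ between net points does not give a quasiconvex set. The issue the paper flags explicitly: two segments added at different places can pass very close to one another (a ``narrow overpass'') without being connected at all, so pairs of points on different bridges could violate (iii) badly. This is why the paper replaces $[x,y]$ by a polygonal path whose vertices lie in a Whitney-type net $\cN$ of $\Gamma^c$ and which is confined to a cone $C_\alpha(\xi)$ (Lemma \ref{thm:bridgelemma}). The geometric properties forced on those polygonal edges (the empty-lens condition $R_\lambda(x,y)\cap\cN=\emptyset$) are exactly what is used to prove that non-adjacent bridge segments satisfy $d(s_1,s_2)\geq a\min\{|s_1|,|s_2|\}$, which in turn reduces (iii) to the case $x,y\in\Gamma$. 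With raw segments this reduction fails.

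Second, your charging estimate for the \typeone (Flat-Bad) balls is wrong as stated. You claim the candidate length at such a ball $B$ is comparable to $\beta_\Gamma(2B)^2\diam(B)$. But a large geodesic detour at scale $2^{-k}$ need not register on $\beta(2B)$ at that single scale: $\Gamma$ can stay within a thin tube while oscillating along it, so all individual $\beta$'s can be $O(\ve)$ yet the arclength inside $B$ is $\gg\diam(B)$. The paper handles this by defining \typeone via \emph{accumulation} of $\beta^2$ over a chain of ancestor cubes (condition \eqref{eq:Reifenberg-stop}, a truncated Jones function reaching a threshold $\ve$), and then the sum over \typeone cubes is reorganized so each $\beta(MQ')^2|Q'|$ is counted a bounded number of times; this requires the auxiliary estimate $\sum_{Q\in D(Q')}|Q|\lec|Q'|$ of Lemma \ref{thm:bj}. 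Your single-scale $\beta$ cannot replace this.

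Third, the route-finding step is not a simple downward induction with ``shortcuts wherever you need them.'' You must locate bridges whose endpoints are \emph{nearly collinear} with $x$ and $y$; otherwise each recursive step can eat more than a fixed fraction of $|x-y|$ and the geometric series does not close. The paper's Main Lemma (Lemma \ref{thm:thefinallemma}) does exactly this bookkeeping, producing at each step segments $S$ with $\sum_{s\in S}\ell(s)\leq|x-y|$ and a residual set $P$ with $\cH^1(P)\lec|x-y|-\sum\ell(s)$; the geometric input (Proposition \ref{thm:prop}, the cone claims, the case split on $\beta(B(z,M'r_z))$) is where most of the difficulty lives and is not recoverable from ``a bounded number of shortcut hops.'' As written, your verification of (iii) is a sketch of the \emph{goal}, not of a proof.
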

\noindent
A set $\tilde\Gamma$ satisfying property (iii) above is called quasiconvex.

The case $d=2$ was first shown by Peter Jones \cite{Jones-TSP} using complex analysis machinery. This was a main tool in his proof of the planar analyst's Traveling Salesman Theorem.  

Let us mention a relation to computer-science.
For a (possibly weighted) graph $G=(V,E)$, a k-spanner is a subgraph with the same vertices, $G'=(V,E')$, in which every two vertices are at most $k$ times as far apart on $G'$ (in the graph metric) than on $G$. 
This is a useful concept in studying  network optimization.
A geometric $k-$spanner is a graph over a set of vertices $K$ in Euclidean space, such that the graph distance is bounded by $k$ times the Euclidean distance for 
any two points in $K$. See \cite{mitchell-spn-04,spanner-book} for more details on how these are useful  in computer science.
We note that the problem we are dealing with is harder than finding k-spanners.  
For a given set $K$, we are concerned with finding a `not too long' set $\tilde\Gamma$, such that $\tilde\Gamma$ is a geometric $k-$spanner for itself, not just for the set $K$, in particular we are building a network which is not too long, and in which all new nodes are also well connected. 
(Also note that in our case, we must also treat the edges as continua of nodes.)

The Traveling Salesman Theorem is a major tool used in our proof (see Theorem \ref{thm:TST} below). It holds in the setting of an infinite dimensional Hilbert space. This is one reason why the  authors  believe the following.
\begin{conjecture}
Theorem \ref{t:main-theorem} holds with constants independent of dimension and in fact holds in the for the case where $K$ is a subset of an infinite dimensional Hilbert space.
\end{conjecture}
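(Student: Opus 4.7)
The plan is to revisit the proof of Theorem \ref{t:main-theorem} and eliminate every appearance of the ambient dimension from the constants, replacing Euclidean covering arguments by intrinsic Christ--David cube arguments on $\Gamma$. The main engine for the length comparison (ii) is the Traveling Salesman Theorem (Theorem \ref{thm:TST}), whose Hilbert-space version holds with universal constants: for any connected $\Gamma$ in a separable Hilbert space $H$, $\cH^{1}(\Gamma)$ is comparable to $\diam(\Gamma)$ plus the multi-scale sum of Jones $\beta$-numbers over a Christ--David system on $\Gamma$. If the construction of $\tilde\Gamma$ and all its length estimates are rewritten so that every comparison factors through this sum, then each universal bound on the TST side becomes a universal bound in our theorem.

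The Reifenberg flat case, already handled in this paper with dimension-free constants, serves as the template. There $\Gamma$ admits a good approximating line at every scale, so the shortcut arcs added to form $\tilde\Gamma$ sit close to $\Gamma$ and their total length can be charged directly against $\sum_{Q}\beta_{\Gamma}(Q)^{2}\,\diam(Q)$; no Euclidean packing in $\bR^{d}$ is invoked. The plan is to emulate this intrinsic book-keeping in general by running a Christ--David decomposition of $\Gamma$ (which is intrinsically doubling with a universal constant because $\Gamma$ is a connected set of finite length) and stratifying its cubes into \typezero, \typeone and \typetwo according to local flatness, as in the $d$-dependent proof, but charging shortcut lengths in a way that avoids any $\bR^{d}$-volume argument.

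The hard part will be the \typetwo scales. In finite dimensions one packs shortcut segments across the bad region by a Besicovitch- or Vitali-type argument whose constant degrades with $d$. In Hilbert space no such packing is available, so the proposal is to arrange that each inserted shortcut at a \typetwo cube $Q$ has length $\lesssim \beta_{\Gamma}(Q)\,\diam(Q)$, whereupon Schul's TST bounds the total shortcut contribution by a universal multiple of $\cH^{1}(\Gamma)$. Making this charging work will require that the corona decomposition be carried out with intrinsic parameters on $\Gamma$, and that the \typetwo shortcuts be designed by choosing connecting endpoints on $\Gamma$ itself rather than on auxiliary Whitney cubes of the complement, so that their lengths are controlled by the local flatness deficit of $\Gamma$ alone. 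Quasiconvexity (iii) is then obtained locally: each point of $\tilde\Gamma$ is close, at every scale, either to $\Gamma$ (which after sufficient shortcutting is itself locally quasiconvex thanks to the \typezero and \typeone treatment) or to one of the inserted arcs.

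Once the finite-dimensional theorem is proved with universal constants, the extension to an infinite-dimensional separable Hilbert space $H$ follows by approximation. Given $K \subset H$ admitting any finite-length connected spanning set, apply the dimension-free theorem in a sequence of finite-dimensional subspaces $V_{n}\uparrow\overline{\Span}(K)$ to produce $\tilde\Gamma_{n}$ satisfying (i)--(iii) with the same constants, then use Blaschke-type compactness in Hausdorff distance on a bounded region, together with Golab's lower semicontinuity of $\cH^{1}$ under Hausdorff convergence, to extract a limit $\tilde\Gamma \subset H$ inheriting all three properties. The linchpin of the whole program is therefore the dimension-free treatment of the \typetwo cubes, which is precisely where the current proof spends its $d$-dependence.
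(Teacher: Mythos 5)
The statement you are asked to prove is stated in the paper as a \emph{conjecture}, and the paper explicitly declines to prove it: Remark \ref{thm:wesuck} identifies the precise point where the authors' own argument fails in infinite dimensions. Your proposal correctly locates the same obstruction (the \typetwo scales), but it does not overcome it; it is a research program, not a proof. Every load-bearing step is phrased as ``the plan is to arrange that\ldots'' or ``making this charging work will require\ldots'' without an actual construction. The specific fix you propose --- charging each \typetwo shortcut a length $\lesssim \beta_{\Gamma}(Q)\,\diam(Q)$ so that Schul's TST sum absorbs it --- misdiagnoses the problem. For a \typetwo cube one has $\beta_{\Gamma}(MQ)>\delta\ve$ bounded \emph{below}, so $\beta_{\Gamma}(Q)\diam(Q)\sim\diam(Q)$ and the per-bridge length is already $\lesssim|B|$ in the paper's construction; the quadratic charge $\beta(MB)^{2}|B|/(\delta\ve)^{2}$ handles a single bridge perfectly well. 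The dimension-dependence enters through the \emph{number} of bridges per \typetwo ball: the construction must connect, for each $\xi_{1}$, all admissible $\xi_{2}\in B(\xi_{1},C2^{-k})\cap\Delta_{k}$ satisfying \star, and in the route-finding argument (case 4.2.4 of Lemma \ref{thm:thefinallemma}) one needs a bridge available between points lying in \emph{prescribed} components of the cone $V_{2}(z)$, which forces building bridges for essentially all nearby net-point pairs. In a ball of radius $M2^{-k}$ in an infinite-dimensional Hilbert space, $\#(\Delta_{k}\cap MB)$ is unbounded, so the total length added at a single \typetwo scale can be made arbitrarily large relative to $|B|$. You give no mechanism for selecting a uniformly bounded family of bridges per scale that still guarantees the cone-intersection property needed for quasiconvexity of all of $\tilde\Gamma$ (not merely of $K$ or $\Gamma$); that selection problem is exactly the open content of the conjecture.

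The concluding limiting argument is also not routine. Projecting $K$ to finite-dimensional subspaces $V_{n}$ does not produce subsets of $K$, and comparing the shortest spanning set of $P_{V_{n}}(K)$ to that of $K$ requires quantitative control you do not supply. More importantly, property (iii) is not obviously stable under Hausdorff convergence: Golab's theorem gives lower semicontinuity of $\cH^{1}$, which secures (ii), but a uniform chord-arc bound for the approximants does not automatically pass to the limit set without an argument that the connecting paths themselves converge (this is the technical heart of the passage to infinite dimensions in \cite{AGJM}, and it is nontrivial there). As it stands, your proposal restates the difficulty the paper already isolates rather than resolving it.
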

See Remark \ref{thm:wesuck}  for a discussion of where our present proof breaks down in this context.
Under some flatness assumptions, however, we can say more.  A set $K$ is called $\ve-$Reifenberg flat (with holes) if for any ball $B$ of radius $r$, we have that  $K\cap B$ is contained inside a tube of radius $\ve r$, where $\ve>0$ is some fixed constant.

In Remark \ref{remark:reifenberg-proof} 
 below, we indicate how our proof of Theorem \ref{t:main-theorem}, coupled with the proof of Theorem \ref{thm:TST}, yields the following theorem.
(We, unfortunately, must appeal to the proof of Theorem \ref{thm:TST}, and not its statement.)

\begin{theorem}\label{t:revisit-reifenberg}
There exist constants $C_1,C_2>1$, and $\ve>0$ such that  for any $\ve$-Reifenberg flat (with holes) set  $K\subset \cH$, a (possibly infinite-dimensional) Hilbert space,  there exists a connected set $\tilde\Gamma\subset\cH$ such that:
\begin{enumerate}
\item[(i)] 
$\tilde\Gamma\supset K$.
\item[(ii)] 
$\cH^{1}(\tilde\Gamma)\leq C_1 \cH^{1}(\Gamma)$ for any  connected $\Gamma\supset K$.
\item[(iii)] 
For any $x,y\in \tilde\Gamma$ there is a path connecting $x$ and $y$, $\gamma_{x,y}\subset \tilde\Gamma$, with 
$$\ell(\gamma_{x,y})\leq C_2|x-y|\,.$$ 
\end{enumerate}
\end{theorem}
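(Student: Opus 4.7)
The plan is to revisit each step in the proof of Theorem \ref{t:main-theorem} and replace every dimension-dependent estimate by a dimension-free one that exploits the $\ve$-Reifenberg flatness of $K$. The two potential sources of a $d$-dependence are (a) the underlying TST, which is used to produce a nearly length-optimal connected set $\Gamma_0 \supset K$, and (b) the combinatorial bookkeeping that accounts for the shortcut segments added to $\Gamma_0$ in order to make it quasiconvex. Since Theorem \ref{thm:TST} itself holds in infinite-dimensional Hilbert space, source (a) is not an issue; the whole task is to handle (b).

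First, we run the construction inside the proof of Theorem \ref{thm:TST} applied to $K \subset \cH$. That construction proceeds scale by scale using $\beta$-numbers in a Hilbert space, and so is carried out without reference to the ambient dimension; it outputs a connected $\Gamma_0 \supset K$ with $\cH^{1}(\Gamma_0) \lesssim \cH^{1}(\Gamma)$ for every connected $\Gamma \supset K$, with a dimension-free constant. Crucially, the construction also records, at each dyadic scale $2^{-k}$, a net $X_k \subset K$ and an explicit family of short nearly-straight connecting arcs; these are the combinatorial objects we will need in the next step.

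Next, we add shortcuts exactly as in the proof of Theorem \ref{t:main-theorem}: wherever two points of $\Gamma_0$ are close in the ambient metric but far in the intrinsic metric, we join them by a short segment, scale by scale, obtaining $\tilde\Gamma$. Bounding the total length of these shortcuts requires a multiplicity estimate: the number of shortcuts at scale $2^{-k}$ passing through any ball of comparable radius must be controlled. In the general finite-dimensional setting this multiplicity is bounded by a volume-packing argument in $\bR^d$, which is the origin of the $d$-dependence. Under the Reifenberg flatness hypothesis, the local geometry of $K$ at each scale is, up to an error of order $\ve$, that of a thin tube around an affine line; consequently only an absolutely bounded number of branches of $\Gamma_0$ can thread any given ball, and the multiplicity is controlled by a dimension-free constant. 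Summing the shortcut lengths in a geometric series over scales yields $\cH^{1}(\tilde\Gamma \setminus \Gamma_0) \lesssim \cH^{1}(\Gamma_0)$, proving (i) and (ii). Property (iii) follows verbatim from the proof of Theorem \ref{t:main-theorem}: for any $x,y\in\tilde\Gamma$, we telescope one shortcut at the scale $|x-y|$ together with a geometrically decreasing sequence of shorter connecting arcs in $\Gamma_0$ and smaller shortcuts, whose lengths sum to $C_2 |x-y|$ with $C_2$ dimension-free.

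The main obstacle, and the only thing new compared to Theorem \ref{t:main-theorem}, is the Reifenberg-flat replacement of the packing/covering step that controls shortcut multiplicity; this is exactly the place flagged in Remark \ref{thm:wesuck} as where the present proof breaks down in infinite dimensions without an additional hypothesis. Choosing $\ve$ small enough that $K$ locally lies in a tube much thinner than the scale of the construction forces the multiplicity bound to be absolute, and once this is in place the three conclusions are obtained by rerunning the proof of the main theorem without alteration.
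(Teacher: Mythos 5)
Your proposal is correct and matches the paper's approach: propagate the Reifenberg flatness of $K$ to the spanning set $\Gamma$ via the \emph{proof} of the Traveling Salesman Theorem (Remark \ref{r:tsp_extras}), then observe that the only dimension-dependent step --- the count $\#(\Delta_k\cap MB)$ flagged in Remark \ref{thm:wesuck} --- is neutralized. One refinement worth making explicit: rather than a ``bounded multiplicity'' for Non-Flat balls, the cleaner observation (and the one the paper intends) is that once $\Gamma$ is $2\ve$-Reifenberg flat and $\ve$ is taken small enough relative to the construction constants $\delta$ and $\ve$, every ball has $\beta(MB)<\delta\ve$, so Non-Flat cubes simply do not occur; consequently the problematic count is never invoked, and the route-finding in Section \ref{s:route_finding} always stays in the $\beta$-small case, which is handled by Lemma \ref{thm:betasmall} --- a lemma that is explicitly stated and proved for arbitrary Hilbert spaces.
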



We note that the work presented in this paper is not the first extension of the $d=2$ version of Theorem  \ref{t:main-theorem}.  
The following theorem holds for any Euclidean  space.

\begin{theorem}[\cite{Jones-TSP,GJM}]
There is $M>0$ such that if $\Gamma$ is a rectifiable simple closed curve in $\bR^{n}$ and $F(\bD)$ is a minimal surface with boundary $\Gamma$, then there is a locally finite partition $\{D_{j}\}$ of $\bD$  such that: 
\begin{enumerate}
\item $F$ is a homeomorphism of $\cnj{D}_{j}$ onto $\cnj{F(D_{j})}$,
\item $F(\d(D_{j}))$ is an $M$ chord-arc curve, and
\item $\sum \cH^{1}(F(\d(D_{n})))\leq M\cH^{1}(\Gamma)$.
\end{enumerate}
\label{thm:AGJM}
\end{theorem}

As discussed before, for $d=2$ this was done in \cite{Jones-TSP}. For $d>2$ this was shown by  John Garnett, Peter Jones and Donald Marshall \cite{GJM}.  
They adapted  the analytic techniques of Jones' original argument to the minimal surface spanned by $\Gamma$.

Other related works are, for example,  
\cite{KK} and \cite{Das-Narasimhan}. Kenyon and Kenyon \cite{KK} is  a mathematically weaker version of Theorem \ref{t:main-theorem} for $d=2$, which has the advantage that is computationally tractable. 
Das and Narasimhan,  in  \cite{Das-Narasimhan},  improve on \cite{KK} and extend to  $d>2$.
Both of these fit within the k-spanner setting in that they are concerned only with  the well-connectedness of nodes in the original set $K$, and not with the well-connectedness of the resulting set. Christopher Bishop \cite{Bishop-Tree}, improves on Theorem \ref{t:main-theorem} for $d=2$. This work  has the advantage of being computationally tractable.

\subsection{Organization}
The paper is organized as follows. 
In Section \ref{s:notation-and-tools} we set-up some notation and tools we will use. In particular we denote by $\Gamma$ a connected set of shortest Hausdorff length containing $K$.
In Section \ref{s:constructing-shortcuts} we add the needed paths to $\Gamma$, giving us a connected set $\tilde \Gamma$ which does not have length more than a constant times that of $\Gamma$.
In Section \ref{s:route_finding} we show that $\tilde\Gamma$  satisfies the properties of Theorem \ref{t:main-theorem}, in particular, that any points $x$ and $y$ in $\tilde\Gamma$ can be connected via a path, all of which is in $\tilde\Gamma$, which has length bounded by a fixed constant multiple of the Euclidean distance between $x$ and $y$.


\subsection{Acknowledgements}
The authors would like to thank the Centre de Recerca Matem\`atica, Barcelona for holding a conference where they developed some of the main ideas for this paper, and John Garnett for his helpful advice. The second author was supported in part by NSF DMS 0502747
and NSF DMS 0800837 (renamed to NSF DMS 0965766).

\subsection{Animation}
The first author created some animation exemplifying the construction in this paper.  It is available at\\ \url{http://www.math.sunysb.edu/~schul/math/AzzamSC-link.html}


\section{Notation and tools}\label{s:notation-and-tools}

\subsection{Notation}

Let $|A|$ denote the diameter of a set $A$.
Let $\cH^1(A)$ denote the 1-dimensional Hausdorff measure of $A$ and 
for a curve $\gamma$, let $\ell(\gamma)$ denote the arclength of $\gamma$.
See \cite{Mattila} for a discussion of Hausdorff measure and arclength.  
For a set $A\subseteq\bR^{d}$, define
\[A_{\delta}=\{x\in\bR^{d}:\dist(x,A)<\delta\}.\]
For points $x,y\in\bR^d$, and $\rho\geq 0$, we will define
\[R_{\rho}(x,y):= B(\frac{x+y}{2},\frac{1+\rho}{2}|x-y|)\]
and let $R(x,y):=R_{0}(x,y)$. Also define
\[S_{\lambda}(x,y)=B(x,(1-\lambda)|x-y|)\cap B(y,(1-\lambda)|x-y|)\]
and let $S(x,y):=S_{0}(x,y)$.
See Figure \ref{fig:lambda}.

For a ball $B$, and a set $K$, define the  Jones$-\beta$ number, $\beta_K(B)$ by setting $\beta_{K}(B)|B|$ to be the width of the smallest tube containing $K\cap B$, i.e.
$$\beta_K(B):=
	\inf\limits_{L \textrm{ line}}\ \ \sup\limits_{x\in B\cap K}\frac{\dist(x,L)}{|B|}\,.$$
We often omit the subscript $K$ when it is clear from context.
For $M>0$, we let $MB$ denote the ball with the same center but diameter $M|B|$. 
See Figure \ref{fig:beta}.

\pic{3in}{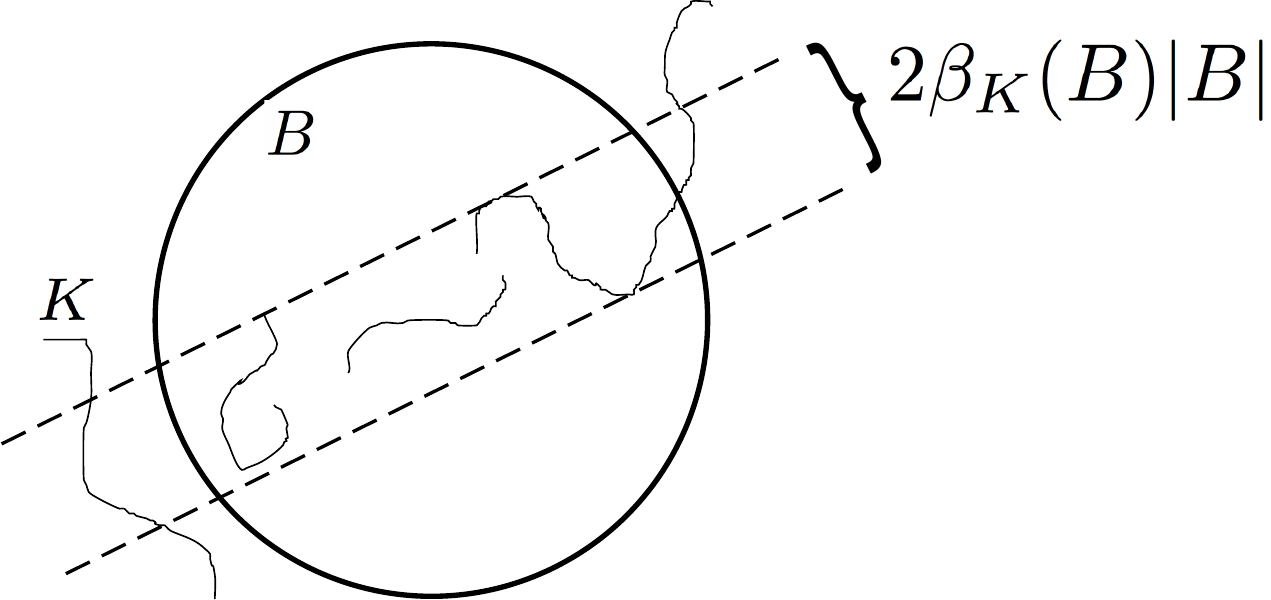}{Geometrically, $2\beta_{K}(B)|B|$ is the width of the smallest tube containing the portion of $K$ inside the ball $B$.}{beta}


If $\gamma$ is a curve with initial and terminal points $x$ and $y$, we say that  $\gamma$ is a {\it chord-arc path with constant $C$}, if its arclength parametrization is a $C$-bilipschitz function.
If we do not specify $C$, we assume it is obvious from the context.  In this paper, we will be constructing chord-arc-paths with constant $C_2$, where $C_{2}$ is a sufficiently large constant to be determined later. 

Let $\Gamma$ be a connected set containing $K$. We may assume $\cH^{1}(\Gamma)<\infty$.

\subsection{Cones}

For a constant $\alpha>1$ and for any point $\xi$ in a set $A$, define $C_{\alpha,A}$ the $(\alpha,A)$ {\it cone with apex} $\xi$ to be the union of connected components of the set
\begin{equation}
\{z\in\bR^d:|z-\xi|<\alpha \dist(z,A)\}
\label{eq:cone}
\end{equation}
which contain $\xi$ in their closures (the case of their being more than one such component is most evident in two dimensions when, say, $A$ is a circle, although this may still occur in higher dimensions;  see Figure \ref{fig:whyweneedcones}). We will let $C_{\alpha}(\xi)=C_{\alpha,\Gamma}(\xi)$. 

\subsection{Nets, Grids, and Cubes}

Let $\{\Delta_{k}\}_{k>0}$ be an increasing sequence of maximal $2^{-k}$-nets in $\Gamma$, and assume $|\Gamma|$ is small enough so that $\Delta_{0}=\{\xi_{0}\}$. Such a sequence may be constructed via induction on $k$.

We also create a lattice in the complement of $\Gamma$ that mimics a Whitney decomposition. Let $k_{0}$ be an integer to be chosen later. Let $\cN_{0}$ be a $2^{-k_{0}}$-net for $\Gamma_{1}\backslash\Gamma_{1/2}$, define $\cN_{k}'$ to be 
a~$2^{-k-k_{0}}$-net for $\bigg((\Gamma_{2^{-k}}\backslash\Gamma_{2^{-k-1}})\bigcup \cN_{k-1}\bigg)$ containing $\cN_{k-1}$.
Let $\cN_{k}=(\Gamma_{2^{-k}}\backslash\Gamma_{2^{-k-1}})\cap \cN_{k}'$. Let $\cN=\bigcup \cN_{k}$. The set $\cN$ forms the vertices of a ``grid" in the complement of $\Gamma$ upon which we will build our bridges by constructing polygonal paths between nearby points in $\cN$. This will ensure that the angles between segments in each path don't become too small. We note that, for $k_{0}$ large enough, we may ensure that
\begin{equation}
x\in \cN_{k}\mbox{ and }y\in \cN\cap B(x,10\cdot  2^{-k-k_{0}}) \Rightarrow y\in \cN_{k-1}\cup \cN_{k}\cup \cN_{k+1}.
\label{eq:close}
\end{equation}
Also note that for all $k>0$, every point in $\Gamma_{2^{-k}}$ is within $2^{-k-k_{0}+2}$ of a point in $\cN$.
Let $\cB_{0}=\{B_{0}\}=\{B(\xi_{0},1)\}$ and for $k>0$, 
\[\cB_{k}:=\{B(\xi,2^{-k}):\xi\in \Delta_{k}\},  \mbox{ and }  \cB=\bigcup \cB_{k}.\] 
Note that $B\in \cB_{k}$ implies $\frac{1}{2}B\in \cB_{k+1}$.

We need a version of dyadic cubes in the spirit of Michael Christ or Guy David. We do not have an underlying measure, so we cannot appeal to their constructions, however we can use ideas from \cite{Christ-T(b), David-T(b)}.
We fix a constant $J=100$, and  give a family (i.e. tree) structure on  $\cup_{k} \Delta_{kJ}$.
For each $x\in\Delta_{kJ}$ where $k>0$, we define a unique parent $y\in\Delta_{(k-1)J}$, 
so that $|y-x|$ is minimized.  If there is more than one such possible $y$, choose arbitrarily.
By the construction of $\Delta_{kJ}$, we have that $2^{-kJ}\leq |y-x|<2^{-kJ+1}$.
Let $D(x)$ be the collections of descendants of $x$ by the above family relation, 
and set $D_j(x)=D(x)\cap \Delta_j$, where $j$ here satisfies $j=Jl$ for some $l\geq 0$.

For $k\geq 0$, and $x\in\Delta_{kJ}$, let 
$$Q^o(x)=\bigcup\limits_{l\geq 0}\bigcup_{z\in D_j(x)\atop j=(l+k)J}B(z,2^{-j-100})\,,$$
and let $Q(x)$ be the closure of $Q^o(x)$. Let $\cQ_{k}=\{Q(x):x\in \Delta_{k}\}$ and $\cQ=\cup_k \cQ_k$. 
We have have the  properties described below.
\begin{lemma}\label{Christ-type-cubes-properties}
For $k\geq 0$ we have the following. 
\begin{itemize}
\item [(i)] $\Gamma=\cup_{x\in\Delta_{kJ}} Q(x)$.
\item [(ii)] If $x_1,x_2\in  \Delta_{kJ}$, then $Q^o(x_1)\cap Q^o(x_2)=\emptyset$.
\item [(iii)] If $x\in  \Delta_{kJ}$ then 
$B(x,2^{-kJ-100})\subset Q^o(x)\subset Q(x)\subset B(x,2^{-kJ}(1+2^{-J}))\,.$
\item  [(iv)]If $l\geq k$, $y\in \Delta_l$,  and $Q^o(x)\cap Q^o(y)\neq\emptyset$ then
$Q^o(x)\supset Q^o(y)$.
\end{itemize}
\end{lemma}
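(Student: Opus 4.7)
My plan is to prove the four parts in the order (iii), (ii), (iv), (i), since each bootstraps on the previous: (iii) supplies the basic size estimate, (ii) uses (iii) to get disjointness, (iv) is a nesting property derived from (ii), and (i) combines (iii) with net-density.

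For (iii), the lower inclusion $B(x,2^{-kJ-100})\subset Q^o(x)$ is immediate: taking $l=0$ in the defining union, $x$ is its own depth-zero descendant and contributes exactly this ball. For the outer inclusion, given $z\in D_{(k+l)J}(x)$, I would trace the parent chain $z=z_l\to z_{l-1}\to\cdots\to z_0=x$ with $z_i\in\Delta_{(k+i)J}$, telescope, and apply the stated parent-distance bound $|z_i-z_{i-1}|<2^{-(k+i)J+1}$. Summing the geometric series and adding the radius $2^{-(k+l)J-100}$ of the ball around $z$ yields the bound $|z-x|+2^{-(k+l)J-100}<2^{-kJ}(1+2^{-J})$ for $J=100$, giving the claim.

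For (ii), pick $w\in Q^o(x_1)\cap Q^o(x_2)$, so for each $i$ there is $z_i\in D_{j_iJ}(x_i)$ with $w\in B(z_i,2^{-j_iJ-100})$. Assume WLOG $j_1\leq j_2$ and let $z_2'$ be the ancestor of $z_2$ at scale $j_1J$. The same geometric-series estimate used in (iii) bounds $|z_2-z_2'|$ by roughly $2^{-j_1J-J+2}$, which combined with $|z_1-w|+|w-z_2|<2\cdot 2^{-j_1J-100}$ gives $|z_1-z_2'|<2^{-j_1J}$. Since $\Delta_{j_1J}$ is $2^{-j_1J}$-separated, this forces $z_1=z_2'$; uniqueness of the ancestor path then gives $x_1=x_2$. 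For (iv), let $x'\in\Delta_{kJ}$ be the ancestor of $y$: every descendant of $y$ is also a descendant of $x'$, so the defining union for $Q^o(y)$ is a sub-union of that for $Q^o(x')$, hence $Q^o(y)\subset Q^o(x')$; then $Q^o(x)\cap Q^o(x')\neq\emptyset$ and (ii) forces $x=x'$, whence $Q^o(y)\subset Q^o(x)$.

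Finally, for (i), reading $B(z,r)$ as $\Gamma$-balls, each $Q(x)\subset\Gamma$ directly from the definition, so the union is too. Conversely, every $z\in\bigcup_{j\geq k}\Delta_{jJ}$ has a unique ancestor chain reaching some $x\in\Delta_{kJ}$, and the ball $B(z,2^{-jJ-100})$ appears in the union defining $Q^o(x)$, so $\bigcup_{j\geq k}\Delta_{jJ}\subset\bigcup_x Q^o(x)$. Since $\Delta_j$ is $2^{-j}$-dense in $\Gamma$, the union $\bigcup_{j\geq k}\Delta_{jJ}$ is dense in $\Gamma$, and taking closures yields $\Gamma\subset\bigcup_x Q(x)$. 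The main technical obstacle is arranging the estimate in (ii) so that the telescoping error is strictly smaller than the net separation $2^{-j_1J}$; the large choice $J=100$ is precisely what provides the required slack to force $z_1=z_2'$, after which everything else is an easy consequence of the tree structure.
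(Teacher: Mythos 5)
Your treatment of (iii), (iv), (i) is fine, but the argument for (ii) has a substantive gap. The key quantitative step --- bounding $|z_2-z_2'|$ by roughly $2^{-j_1J-J+2}$ and combining with the two ball radii to conclude $|z_1-z_2'|<2^{-j_1J}$ --- reads the parent-distance bound as $|z_i-z_{i-1}|<2^{-(k+i)J+1}$. But for increasing $2^{-k}$-nets, the distance from $x\in\Delta_{kJ}$ to its nearest neighbour in $\Delta_{(k-1)J}$ is controlled only by the \emph{density} of the coarser net, i.e.\ it is $<2^{-(k-1)J}$, a factor $2^{J-1}$ larger than $2^{-kJ+1}$. (The paper's own telescoping bound $\dist(x_1,y)\leq\sum_{p=0}^{l-1}2^{-pJ}2^{-kJ}$ and the outer inclusion $Q(x)\subset B(x,2^{-kJ}(1+2^{-J}))$ in (iii) both reflect this; they would be wildly over-generous if descendants never strayed past $\sim 2^{-kJ-J}$.) With the correct parent gap, the geometric series gives only $|z_2-z_2'|\lesssim 2^{-j_1J}$, so your triangle inequality yields $|z_1-z_2'|<2^{-j_1J}\bigl(1+O(2^{-J})\bigr)$, which is not strictly below the separation $2^{-j_1J}$; the step $z_1=z_2'$ does not follow.

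The ingredient you are missing, and which the paper's own proof of (ii) uses, is the \emph{minimality} in the definition of the parent. If $x'\in\Delta_{(j+1)J}$ has parent $x_1\in\Delta_{jJ}$, then by minimality $\dist(x',x_1)\leq\dist(x',x_2)$ for any other $x_2\in\Delta_{jJ}$, whence $\dist(x',x_2)\geq\tfrac12\dist(x_1,x_2)\geq 2^{-jJ-1}$ by the triangle inequality. Take $x'$ to be the ancestor of $z_2$ at scale $(j_1+1)J$, whose parent is $z_2'$, and $x_2=z_1$: if $z_1\neq z_2'$ this forces $\dist(x',z_1)\geq 2^{-j_1J-1}$, yet $x'$ lies within roughly $2^{-(j_1+1)J}=2^{-j_1J-100}$ of $z_2$, hence of $w$ and $z_1$ --- a contradiction. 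Distance estimates and net separation alone, without this half-scale of extra slack coming from minimality, leave two descendants from distinct subtrees potentially as close as the net scale itself, which is $2^{100}$ times the ball radius, so no contradiction is forced.
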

We note that if $\Gamma\subset \bR^d$ then the quantity $\#D_j(x)$ grows exponentially in $j$, with fixed base, depending on the dimension $d$.  We will not make use of this fact, nor will we need any bound on this growth except under special circumstances, where we will  have explicit bounds which will be independent of $d$.
\begin{proof}
First, (i) follows from the definition and induction on $k$.
To see (ii), suppose that $y\in  Q^o(x_1)$ and  
assume 
that $y\in D(x_1)$.
Then, $\dist(x,y)\leq \sum_{p=0}^{l-1} 2^{-pJ}2^{-kJ}\leq 2^{-kJ}(1+2^{-J})$.
Furthermore, if $x'\in D(x_1)\cap D_{(k+1)J}$ with $y\in D(x')$, then 
$$\dist(x',x_2)\geq \dist(x',x_1)\,,$$ and by the triangle inequality, 
$2\dist(x',x_2)\geq \dist(x_1,x_2)\geq 2^{-kJ}$ giving us
$\dist(x',x_2)\geq 2^{-kJ-1}$
Also, as above, $\dist(x',y)\leq 2^{-kJ-J}(1+2^{-J})$.  Using the triangle inequality again,
$$\dist(y,x_2)\geq  2^{-kJ-1} - 2^{-kJ-J}(1+2^{-J}) > 2^{-kJ-1}(1-2^{-J})$$ 
for $J>2$, for example $J=100$.
This gives that $y\notin B(x_2,2^{-kJ-100 + 1})$.  
We can run the same argument for any other ball in the definition of $Q^o(x_2)$.
In particular, we get (ii) by the density of $D(x_1)$ in $Q^o(x_1)$. Similarly, (iii) and (iv) follow as well.
\end{proof}

For a cube $Q$, denote by $MQ$ the set $\{x:\dist(x,Q)\leq M \diam(Q)\}$.
We will assume $M>2^{J+2}$.

\subsection{The Traveling Salesman Theorem}

The last tool we use is the following theorem:
\begin{theorem}[(Analyst's) Traveling Salesman Theorem] For a set $K$ in a Hilbert space $\cH$, define
\[ \beta_{K}=|K|+\sum_{k}\sum_{B\in\cB_{k}}\beta_K^{2}(MB)|B|.\]
There is $M_{0}$ such that for $M>M_{0}$ and any set $K$, if $\beta_{K}$ is finite, then $K$ may be contained in a connected set $\Gamma$ such that
\[\cH^{1}(\Gamma)\lec \beta_{K}.\]
Moreover, if $\Gamma$ is any rectifiable set of finite length, then 
\[\beta_{\Gamma}\lec \cH^{1}(\Gamma)\]
for any $M>1$.
\label{thm:TST}
\end{theorem}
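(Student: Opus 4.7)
The theorem has two directions: an upper bound constructing a short connected $\Gamma\supset K$ from a finite $\beta_K$, and a lower bound estimating $\beta_\Gamma$ by $\cH^1(\Gamma)$. I would prove them separately, and the upper bound is the harder direction.

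For the upper bound, the plan is a multiresolution interpolation scheme built on the nets $\Delta_k$ already in hand. Produce connected sets $\Gamma_0\subset\Gamma_1\subset\cdots$ inductively: $\Gamma_0$ is a minimal tree on $\Delta_0$, and $\Gamma_k$ is obtained from $\Gamma_{k-1}$ by inserting, in each ball $MB$ with $B\in\cB_k$, a short polygonal path that visits the points of $\Delta_k\cap MB$ in the order of their orthogonal projections onto a best-approximating line $L_{MB}$ for $K\cap MB$. The crucial geometric lemma is a Pythagoras-type estimate: if points $p,q,r$ lie within $\beta|B|$ of a common line and $|p-r|,|r-q|\approx|B|$, then the detour cost $|p-r|+|r-q|-|p-q|$ is $O(\beta^2|B|)$. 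Summing this extra length scale-by-scale yields $\cH^1(\Gamma)\leq|K|+C\sum_{k}\sum_{B\in\cB_{k}}\beta_K^2(MB)|B|=C\beta_K$, where $\Gamma=\overline{\bigcup_k\Gamma_k}$; connectedness is preserved at every stage, and since $\bigcup_k\Delta_k$ is dense in $K$ one obtains $\Gamma\supset K$.

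For the lower bound, fix a rectifiable $\Gamma$ of finite length and a ball $MB$ with $B\in\cB_k$. If $\beta=\beta_\Gamma(MB)$, then by definition some point $z\in\Gamma\cap MB$ lies at distance $\geq c\beta|B|$ from the best-approximating line; picking entry and exit points $x,y$ of $\Gamma$ from $MB$, the subarc through $z$ has length at least $|x-z|+|z-y|$, and a direct Pythagorean computation $\sqrt{a^{2}+b^{2}}\geq a+\tfrac{b^{2}}{2a}$ gives an excess of order $\beta^2|B|$ over the chord $|x-y|$. Because the centers of balls in $\cB_k$ form a $2^{-k}$-net, every piece of $\Gamma$ is charged to only boundedly many balls at each scale, so a telescoping double-counting returns $\sum_{k}\sum_{B\in\cB_{k}}\beta_\Gamma^2(MB)|B|\lesssim\cH^1(\Gamma)$, which together with the trivial bound $|\Gamma|\leq\cH^1(\Gamma)$ gives $\beta_\Gamma\lesssim\cH^1(\Gamma)$.

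The main obstacle is extracting the square $\beta^{2}$ rather than just $\beta$ with constants that do not depend on the ambient dimension. In finite dimension one typically selects best-approximating lines by compactness on a Grassmannian, which is unavailable in an infinite-dimensional Hilbert space. Instead one must work with explicit orthogonal projections and exploit the strict convexity of the Hilbert norm, using only the net-theoretic structure recorded in Lemma \ref{Christ-type-cubes-properties} and avoiding any appeal to a bound on $\#D_j(x)$. This is the technical content of Schul's extension of Jones's original two-dimensional argument, and it is precisely the reason the infinite-dimensional statement required a separate proof.
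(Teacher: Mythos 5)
The paper does not actually prove Theorem~\ref{thm:TST}; it invokes it as a black box, citing Jones \cite{Jones-TSP} for $\cH=\bC$, Okikiolu \cite{O-TSP} for $\cH=\bR^d$, and Schul \cite{Schul-TSP} for the infinite-dimensional case, so there is no in-paper proof to compare your sketch against. Taken on its own, your outline does capture the shape of those external arguments: the multiresolution ``farming'' construction with Pythagorean excess $O(\beta^2|B|)$ for the upper bound, and the reverse Pythagorean excess summed with bounded overlap for the lower bound, with the correct identification that the infinite-dimensional obstruction is the absence of compactness on a Grassmannian.

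Two cautions if you intend this as more than a pointer to the literature. First, in the lower-bound sketch the phrases ``entry and exit points of $\Gamma$ from $MB$'' and ``the subarc through $z$'' tacitly assume $\Gamma$ is a curve; the theorem as stated here allows an arbitrary rectifiable set of finite length, for which one must first decompose $\Gamma$ into countably many Lipschitz graphs (or reduce to the connected case via the upper bound) before any ``subarc through $z$'' exists. Second, the ``telescoping double-counting'' hides the genuine difficulty: in infinite dimensions a fixed point of $\Gamma$ can lie in unboundedly many balls $MB$ with $B\in\cB_k$ at a single scale $k$, so the naive charging argument fails. Schul's proof gets around this with a martingale-type filtration adapted to the net structure, not a simple overlap count, and this is where the real work is; your final paragraph gestures at the right issue but the middle of the argument does not yet confront it.
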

Note that this imples that if $\Gamma\subset\cH$ is a connected set, then 
\[\beta_{\Gamma}\sim \cH^{1}(\Gamma)\]
This theorem was originally proved for $\cH=\bC$ by Peter Jones \cite{Jones-TSP}, then  generalized to $\cH=\bR^{d}$ by Kate Okikiolu \cite{O-TSP}, and to infinite dimensional Hilbert spaces by the second author of this paper \cite{Schul-TSP}.

We now begin the proof of the main theorem by showing we may contain $\Gamma$ in a set $\tilde{\Gamma}$  satisfying 
$$\cH^{1}(\tilde \Gamma)\lesssim \cH^{1}(\Gamma) + \beta(\Gamma)\lesssim \cH^{1}(\Gamma)\,.$$

\begin{remark}\label{r:tsp_extras}
By the proof of the Traveling Salesman Theorem, we may assume, by allowing an increase of $\cH^{1}(\Gamma)$ by a constant multiple, that  $\Gamma$ satisfies the  following properties for balls 
$B$  with center in $\Gamma$ and  $\beta(2B)<\ve$, with $\ve$ sufficiently small
\begin{itemize}
\item  There is a component of $\Gamma\cap B$ with diameter at least $|B|(1-\ve)$. 
\item The Hausdorff distance between $\Gamma\cap B$ and $L\cap B$  is bounded by 
$4\ve |B|$ for some  affine line $L$.
\end{itemize}
Furthermore, if $\Gamma$ had initially been $\ve-$Reifenberg flat with holes, then the above may be achieved while keeping $\Gamma$ $2\ve-$Reifenberg flat; if $K$ is $\ve-$Reifenberg flat with holes, then one may construct $\Gamma\supset K$ $2\ve-$Reifenberg flat and such that $\cH^{1}(\Gamma)\lesssim \cH^{1}(\Gamma')$ for any $\Gamma'\supset K$.
Henceforth, we shall assume $\Gamma$ has these properties. 
\end{remark}

\subsection{Outline}

Let us give a rough idea of our plan.
The proof of the theorem is a stopping time process run on a family of balls centered along $\Gamma$. The idea is that when a certain stopping time function becomes too big on one of the balls, this tells us to build a bridge between points. At first, it would seem that we merely have to check when the $\beta$-number of a ball was too large since this would detect a bend in the curve $\Gamma$ where one should build a short-cut. 
However, this doesn't account for the case of  sets which have small $\beta$ on all scales but contain a lot of length. (It is amusing to note that if we didn't have the assumption that $\Gamma$ had finite length, then $\Gamma$ could possibly satisfy $\beta(MB)\sim \ve$ for all sufficiently small balls $B$ with centers in $\Gamma$, in which case all balls will be ``flat" or have small $\beta$, but $\Gamma$ will have dimension at least $1+c\ve^{2}$ for some constant $c$ independent of $\ve$; see \cite{BJ97} or exercises in chapter X of \cite{Harmonic-Measure}.  A simple example of such a set is a flat Von-Koch snowflake). 
Therefore, it is necessary to keep a history of the $\beta$-numbers through the stopping time process, that is, not only do we keep track of the $\beta$ of a ball but also of the balls in the previous generations containing it (see condition \eqref{eq:Reifenberg-stop} below). We run the stopping time process until a chain of balls have accumulated a large total amount of $\beta$-numbers, and in this event we add a bridge. 
Separate treatment is given to balls with $\beta(MB)$ bounded away from zero.
\section{Constructing shortcuts}\label{s:constructing-shortcuts}

In this section we will classify all balls into three classes and explain how we build bridges in each of those cases. We will record some of their properties, and use those in Section \ref{s:route_finding}.  

\subsection{The Bridges}

The general idea for building a bridge between two points in $\Gamma$ inside a ball $B$ is to pick a point $z\in \Gamma^{c}$ whose distance from each of those points is $\sim |B|$ and then connect it to both of those points. This is not as trivial as it sounds. 
If one is not careful enough, it may be the case that after adding all our bridges to $\Gamma$ to form $\tilde{\Gamma}$, while each pair of points in $\Gamma$ may be joined by a path of small relative length, points between the bridges themselves may have to travel a long relative distance to reach each other. To see this, imagine two bridges connecting two different pairs of points in $\Gamma$, but their middles being very close (i.e. they form a narrow overpass). 
Building our bridges as polygonal paths with vertices in $\cN$ will help guarantee that points on different bridges can only be as close as their distance from $\Gamma$. 

\begin{lemma}
Suppose $\xi\in \Gamma$, $z\in \cN_{k}\cap B(\xi, C2^{-k})\cap C_{\alpha}(\xi)$, $\xi$ is in the closure of some component $A$ of $C_{\alpha}(\xi)$, and $z$ and $\xi$ may be joined by a path $p$ in $A$. Moreover, suppose $p$ has the property that for any ball $B=B(w,2^{-j-k_{0}+1})$ with $w\in \cN_{j}$ that intersects $p$, $2B\cap p$ is connected. Then there is a path $p'$ connecting $z$ and $\xi$ with the following properties:
\begin{itemize}
\item[a.] $p'$ is a polygonal path in $C_{2\alpha}(\xi)$ with vertices in $\cN$
\item[b.] $\ell(p')\lec \ell(p)$,
\item[c.] if $[x,y]$ is an edge in the path $p'$ and $x\in \cN_{j}$ for some $j$, then $y\in \cN_{j-1}\cup \cN_{j}\cup \cN_{j+1}$ and there is $\lambda>0$ such that
\[R_{\lambda}(x,y)\cap (\cN\backslash\{x,y\})=\emptyset\,,\]
and
\item[d.] $|x-y|\sim 2^{-j-k_{0}}$.
Here, $\lambda$ and all other implied constants are universal.
\end{itemize}
\label{thm:bridgelemma}
\end{lemma}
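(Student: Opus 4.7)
The plan is to construct $p'$ by a greedy walk along $p$ which at each stage snaps the current path point to a nearby vertex of $\cN$ at the appropriate Whitney scale. Parametrize $p:[0,L]\to A$ by arclength with $p(0)=z$ and $p(L)=\xi$, and recursively build vertices $z_0=z,z_1,\dots\in\cN$ together with times $0=t_0<t_1<\cdots$ as follows. Given $z_i\in\cN_{j_i}$, set $r_i=c_0\,2^{-j_i-k_0}$ for a sufficiently large universal constant $c_0$, let $t_{i+1}$ be the first $t>t_i$ with $|p(t)-z_i|=r_i$, and let $z_{i+1}$ be chosen from $\cN\cap B(p(t_{i+1}),2^{-j_i-k_0+2})$ by the criterion described below. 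Iterate until $|p(t_i)-\xi|<r_i$, at which point a short tail at successively finer generations is appended to reach $\xi$.

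\textbf{Verification of (d) and the generation part of (c).} Because $p$ lies in $C_\alpha(\xi)$ and $r_i\ll\dist(z_i,\Gamma)\sim 2^{-j_i}$, the sample point $p(t_{i+1})$ still lies in an annulus of the form $\Gamma_{2^{-j_i}}\setminus\Gamma_{2^{-j_i-2}}$. The density statement for $\cN$ from Section~\ref{s:notation-and-tools} produces a vertex within $2^{-j_i-k_0+2}$ of $p(t_{i+1})$, and \eqref{eq:close} places any such vertex in $\cN_{j_i-1}\cup\cN_{j_i}\cup\cN_{j_i+1}$. With $c_0$ large enough this also yields $|z_i-z_{i+1}|\sim 2^{-j_i-k_0}$, which is (d).

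\textbf{Verification of (a) and (b).} For (a), each segment $[z_i,z_{i+1}]$ has length $O(2^{-j_i-k_0})$, negligible compared to $\dist(z_i,\Gamma)\sim 2^{-j_i}$ when $k_0$ is taken large; a routine triangle-inequality computation using the cone inequality $|w-\xi|<\alpha\dist(w,\Gamma)$ for $w\in A$ then shows $[z_i,z_{i+1}]\subset C_{2\alpha}(\xi)$ and that the segment avoids $\Gamma$. For (b), the hypothesis that $2B\cap p$ is connected for every $B=B(w,2^{-j-k_0+1})$ with $w\in\cN_j$ meeting $p$ prevents $p$ from re-entering the ball $B(z_i,r_i/2)$ between $t_i$ and $t_{i+1}$, so the arclength of $p$ over $[t_i,t_{i+1}]$ is $\gtrsim r_i\sim|z_i-z_{i+1}|$; summation gives $\ell(p')\lesssim\ell(p)$. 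A parallel estimate handles the final tail.

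\textbf{The delicate part: separation in (c).} The main obstacle is producing a universal $\lambda>0$ with $R_\lambda(z_i,z_{i+1})\cap(\cN\setminus\{z_i,z_{i+1}\})=\emptyset$. The key structural input is annular: only vertices in $\cN_{j_i-1}\cup\cN_{j_i}\cup\cN_{j_i+1}$ can come within $O(2^{-j_i-k_0})$ of $z_i$, since vertices of generation $\geq j_i+2$ or $\leq j_i-2$ are separated from $z_i$ radially by at least $\sim 2^{-j_i-2}\gg 2^{-j_i-k_0}$. Moreover, by the nested construction $\cN_{j_i-1}\cup\cN_{j_i}\cup\cN_{j_i+1}\subset\cN_{j_i+1}'$, which is a maximal $2^{-j_i-k_0-1}$-separated set, so the number of candidate obstructing vertices in $B(z_i,C\,2^{-j_i-k_0})$ is bounded by a constant depending only on $d$. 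I would then select $z_{i+1}$ from among the bounded list of net neighbors of $p(t_{i+1})$, combined if necessary with a small perturbation of the prescribed radius $r_i$ in a universal range $[c_0,2c_0]\,2^{-j_i-k_0}$, so that the enlarged ball $R_\lambda(z_i,z_{i+1})$ is pushed off of each of the other candidates; existence of such a choice follows from a pigeonhole argument once $\lambda$ is taken small enough relative to the separation constant $2^{-j_i-k_0-1}$. Calibrating $c_0$, $k_0$, and $\lambda$ to make this selection go through is where the technical work lies; the remaining assertions are bookkeeping.
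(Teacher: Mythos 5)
Your overall plan — walk along $p$ and snap to net vertices at the Whitney scale to produce a polygonal path — is the same as the paper's, and your handling of (a), (b), (d), and the ``generation'' part of (c) is close in spirit to what the paper does (the paper extracts a chain of balls $B(w,2^{-j-k_0+1})$ covering $p$ with bounded overlap rather than a stopping-time walk, but these two devices accomplish the same thing). The place where your argument breaks is precisely what you flag as the ``delicate part'': the separation condition $R_\lambda(z_i,z_{i+1})\cap(\cN\setminus\{z_i,z_{i+1}\})=\emptyset$. You propose to choose $z_{i+1}$ among a bounded list of candidates, possibly after perturbing $r_i$, and assert via ``pigeonhole'' that one candidate is unobstructed. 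That is not a pigeonhole argument: each of your finitely many candidates $z_{i+1}$ determines its own region $R_\lambda(z_i,z_{i+1})$, and each region may very well contain some net point (e.g.\ several nearly collinear net vertices near $z_i$ at spacing $\sim 2^{-j_i-k_0}$ would obstruct essentially every candidate at distance $\gtrsim c_0 2^{-j_i-k_0}$). Nothing in the construction of $\cN$ rules such configurations out, so the existence of a clean choice is exactly the thing that needs to be proved, and perturbing $r_i$ does not help since $R_\lambda(z_i,z_{i+1})$ depends only on $z_i$ and $z_{i+1}$, not on $r_i$.

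The paper resolves this differently. It first builds the rough polygonal path $p'$ from the ball centers without worrying about (c), and then post-processes: whenever an edge $[x,y]$ has an offending vertex $w\in R_\lambda(x,y)\cap\cN$, it replaces $[x,y]$ by $[x,w]\cup[w,y]$. The planar-geometry fact $R_\lambda(x,y)\subseteq B(x,cr)\cup B(y,cr)\cup S_\lambda(x,y)$ together with the separation of $\cN$ forces $w\notin B(x,br)\cup B(y,br)$, so $|x-w|,|w-y|<(1-\lambda)|x-y|$. Since by \eqref{eq:close} all vertices that could appear stay within three adjacent generations of $\cN$ and hence are $\gtrsim 2^{-j-k_0}$-separated, while the edge lengths shrink geometrically under subdivision, the process terminates after a bounded number of steps, and the added length is summable by the geometric decrease. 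This iterative-subdivision-with-geometric-decrease mechanism is the missing idea in your proposal; without it, there is no justification that a one-pass greedy selection can satisfy (c).
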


\begin{proof}
Since $\{B(w,2^{-j-k_{0}+1}):w\in \cN_{j}, j\geq 0\}$ is a cover of $\Gamma^{c}$, consider the subcollection $\cC$ of all those balls that intersect $p\backslash\{\xi\}$. Let $\gamma:[0,\ell(p))\rightarrow p\backslash\{\xi\}$ be the arclength parametrization of $p\backslash\{\xi\}$, Let $\cI=\{\gamma^{-1}(2B):B\in \cC\}$. Choose a subcollection $\{I_{j}\}\subseteq \cI$ so that the rightmost endpoint of $I_{j}$ is contained in $I_{j+1}$ and so that $I_{0}=\gamma^{-1}(B(z,2^{-k-k_{0}+2}))$, and so that no point in $[0,\ell(p))$ is contained in more than two sets in $\{I_{j}\}$. Hence, $\sum \ell(I_{j})\lec \ell(p)$. Let $B_{j}\in \cC$ be the ball so that $I_{j}=\gamma^{-1}(2B_{j})$ and let $w_{j}$ be their centers, with $w_{0}=0$. Then $2B_{j}\cap 2B_{j+1}\neq\emptyset$ and by $\eqref{eq:close}$, $|w_{j}-w_{j+1}|\lec |B_{j}|$. Moreover, 
\[\sum |B_{j}|\lec \sum \ell(I_{j})\lec \ell(p).\]  
Let $p'$ be the path $\bigcup_{j=0}^{\infty}[w_{j},w_{j+1}]$. Then
\[\ell(p')\leq \sum|w_{j}-w_{j+1}|\leq \sum 2|B_{j}|\lec \ell(p).\]

\pic{5in}{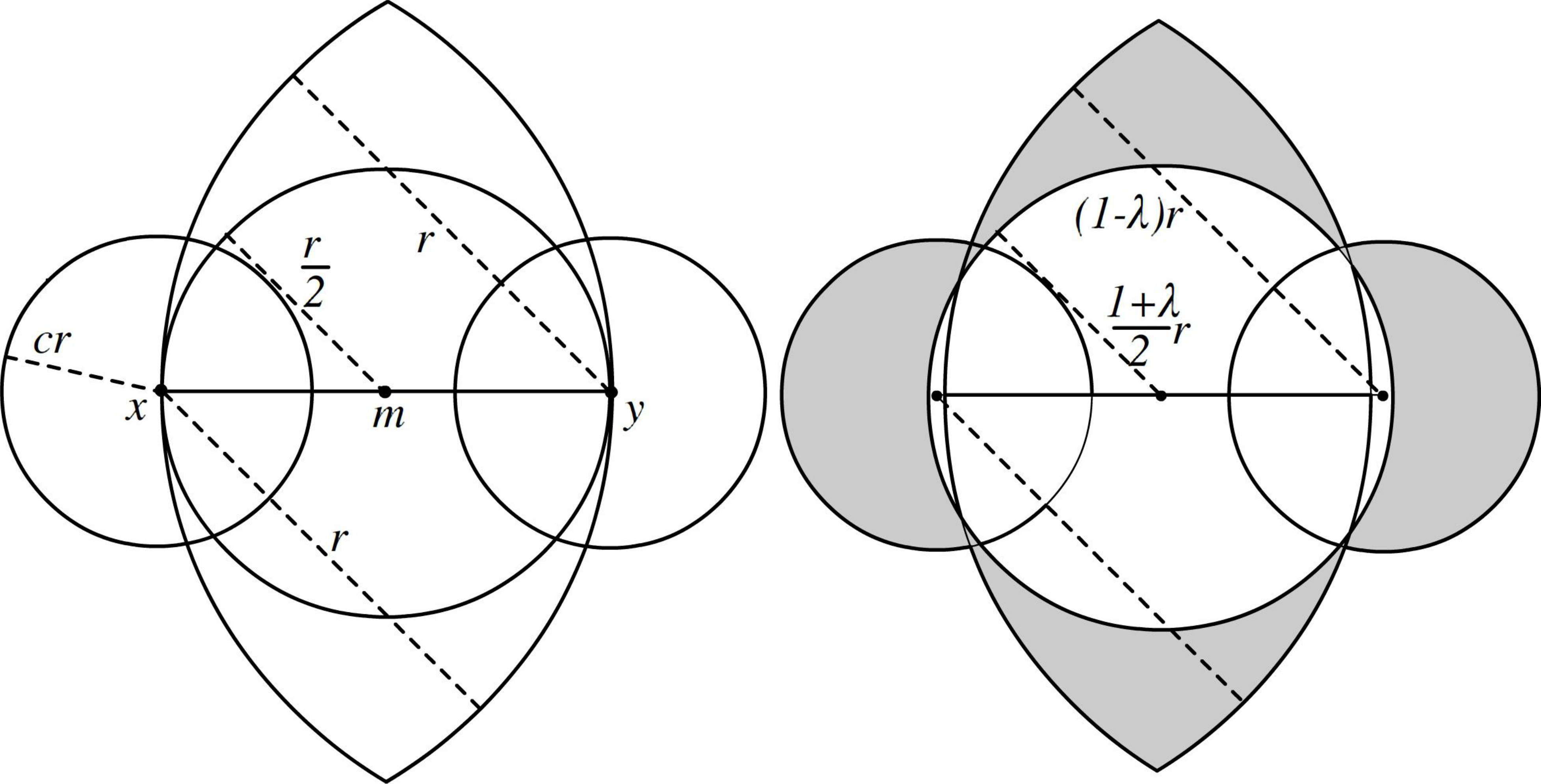}{On the left are the balls $B(x,cr),B(y,cr),R(x,y)$, and the set $S(x,y)$. Then by slightly adjusting $r$ in the definition of the latter two sets, we know there is $\lambda$ small such that $R_{\lambda}(x,y)$ is contained in the shaded region $B(x,cr)\cup B(y,cr)\cup S_{\lambda}(x,y)$.}{lambda}

Hence we can find a polygonal path so that (b) is satisfied. We will now adjust this path so that (b) is still satisfied but so that (c) is true. Let $[x,y]$ be an edge in $p'$. If $x\in \cN_{j}$, by the work above, $y\in B(x,2^{-j-k_{0}+3})$ and hence is in $\cN_{j-1}\cup \cN_{j}\cup \cN_{j+1}$ and $|x-y|> 2^{-j-k_{0}-1}$. Let $r=|x-y|$, then by the previous sentence there is a constant $b=b(k_{0})<1$, such that $(B(x,br)\cup B(y,br))\cap \cN=\emptyset$. 
By some planar geometry, (see Figure \ref{fig:lambda}),
there exists a small constant $\lambda=\lambda(b)>0$ such that
\[R_{\lambda}(x,y)\subseteq B(x,cr)\cup B(y,cr)\cup S_{\lambda}(x,y).\]
Suppose that $[x,y]$ didn't satisfy (c). Let $w\in R_{\lambda}(x,y)\cap (\cN\backslash\{x,y\})$. Replace the edge $[x,y]$ with $[x,w]$ and $[w,y]$. The total length we have added is no more than $[x,y]$, and moreover, since $w\not\in B(x,br)\cup B(y,br)$, we have $|w-x|,|w-y|<(1-\lambda)|x-y|$. 
Repeat the process on these two new edges, checking to see if they satisfy (c) and replacing them if not. 
This replacement can only happen a finite number of times, since the vertices of any new edge we add must be in $\cN_{j-1}\cup \cN_{j}\cup \cN_{j+1}$ by \eqref{eq:close}, but their mutual distances are decreasing by a factor of $1-\lambda$ each time we add a new edge. By doing this on each edge in $p'$ a finite number of times, we have adjusted $p'$ into a path that satisfies (c) and increased it's length by no more than some universal factor.

Suppose $|x-y|$ is an edge with $x\in \cN_{j}$. We have already seen that $|x-y|\gec 2^{-k-k_{0}}$. If $|x-y|\geq 10\cdot 2^{-k-k_{0}}$, then $B(\frac{x+y}{2},2^{-k-k_{0}+2})\cap\cN$ is empty by part (c), but this contradicts the sentence following \eqref{eq:close}.

Finally, if $k_{0}$ is large enough (depending on $\alpha$), the final product $p'$ will be contained in $C_{2\alpha}(\xi)$, which gives (a).
\end{proof}

\begin{remark}
We will only replace paths $p$ with polygonal paths $p'$ when $p$ satisfies the conditions of this lemma. In fact, it so happens that the only paths $p$ we ever have to deal with are polygonal paths composed of either one segment or two segments that make an angle of $\frac{\pi}{4}$. Hence, when we refer to a path or polygonal path, we will assume it has the properties in the lemma. 
\end{remark}

\subsection{Three types of cubes}
We will classify all cubes in $\cQ$ as \typezero, \typeone, or \typetwo.
Let $\delta,\ve>0$ and $M>0$, to be chosen later
(see  subsection \ref{s__main-lemma}).
If $B\in\cB$ satisfies $\beta(MB)>\delta\ve$ we say that $B$ is of {\it \typetwo}. If $Q=Q(x)\in \cQ_{k}$ is such that $B(x,2^{-kJ})$ is \typetwo, call $Q$ {\it \typetwo} as well. The rest of the cubes are divided into two distinct classes:\typezero and \typeone.  The class \typeone will be defined in the following section, and the class \typezero will simply be all the cubes in $\cQ$  which are not of the types \typeone or \typetwo.


\subsubsection{Definition of and construction at \typeone cubes}

Let $\ve>0$ and $M>0$, to be chosen later, with $\ve\ll\frac1M$
(see  subsection \ref{s__main-lemma}).
Suppose $\Gamma$ has diameter small enough so that $\beta(Q_{0})<\ve$, where $Q_{0}=Q(\xi_{0})$, $\Delta_{0}=\{\xi_{0}\}$. Call $\{Q_{j}\}_{j=m}^{k}$ a {\it chain} if $Q_{j}\in\cQ_{j}$, and $Q_{j+1}$ is a child of $Q_{j}$ for each $j\geq m$.

Going through the cubes in order, consider the first cube $Q$  (if it exists) such that there is a chain $\{Q_{j}\}_{j=0}^{k}$ so that $Q_{k}=Q\in \cQ_{k}$, $\beta(Q_{j})<\ve\delta$ for $0<j\leq k$ and
\begin{equation}
\sum_{j=1}^{k-1} \beta(MQ_{j})^{2}\leq \ve< \sum_{j=1}^{k} \beta(MQ_{j})^{2}.
\end{equation}
This sum is essentially a truncated Jones function (see \cite{Harmonic-Measure}). Call $Q_{k}$ {\it \typeone}. Let $B=B_{k}=B(x,2^{-kJ})$ where $x$ is the center of $Q$. Pick $z\in 2B_{k}\cap \cN_{kJ}$ closest to the center of $B$.

For each $\xi \in 2B\cap \Delta_{kJ}$ (there will be at most three of these for $\ve$ small), pick $\xi'\in B(\xi,M\ve|B|)\cap \Gamma$ that is closest to $z$, and note that, for small $\ve\ll\frac{1}{M}$, $[z,\xi']\subseteq C_{\alpha}(\xi')$, so by Lemma \ref{thm:bridgelemma}, we may connect $z$ to $\xi'$ by a polygonal path ($\alpha$ will be fixed in Section \ref{s:route_finding}). See Figure \ref{fig:okaybridge}.

\pic{4.5in}{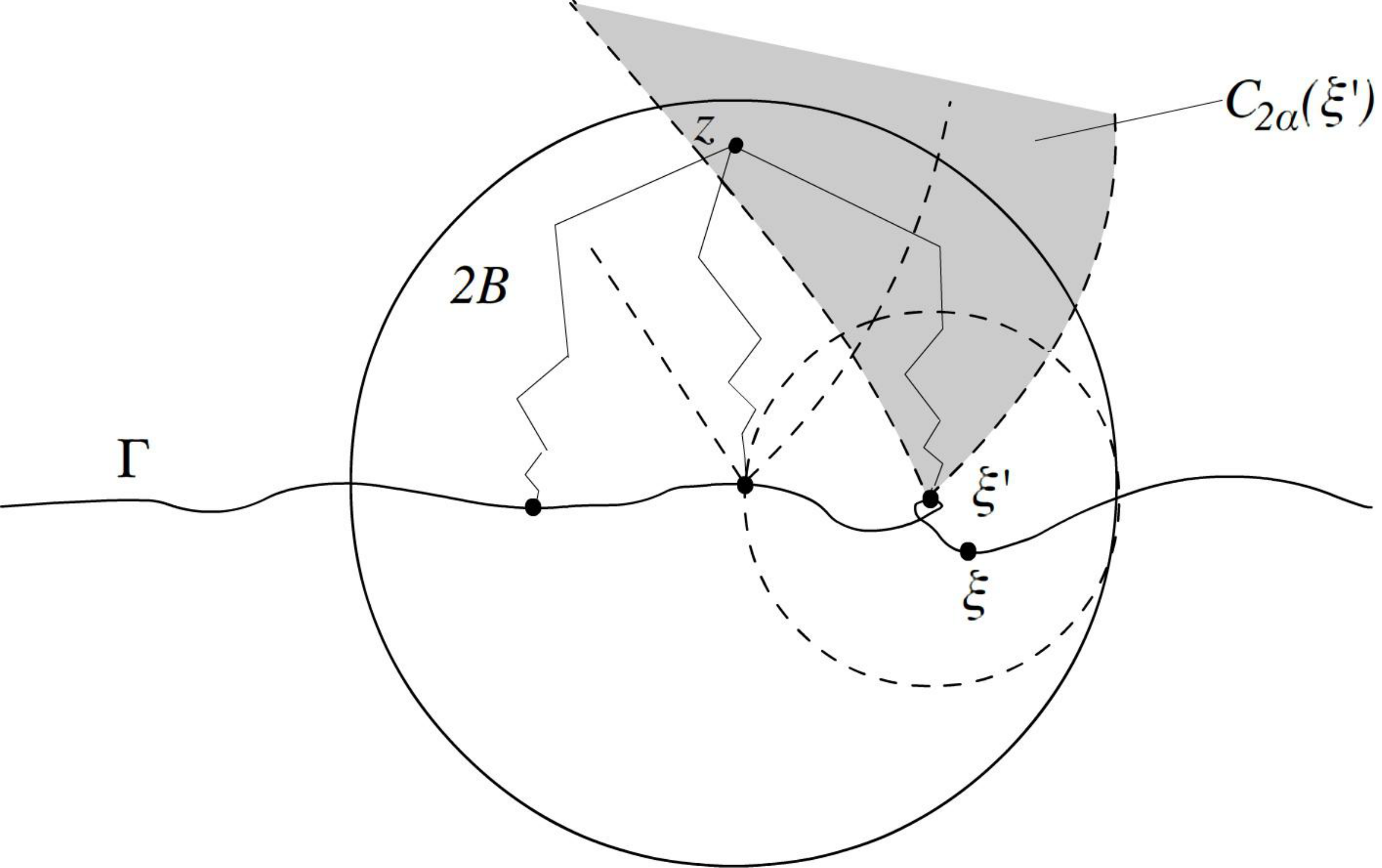}{Building bridges for \typeone balls.}{okaybridge}

\begin{remark}
Building our path within a component of $C_{2\alpha}(\xi')$ in this way ensures that the path won't intersect $\Gamma$ at too sharp an angle or get too close to $\Gamma$ before reaching $\xi'$, and for this reason it is in fact necessary for any curves we add to be contained in cones centered on $\Gamma$.

It is also important to note why we can't necessarily connect $z$ directly to $\xi$ instead of the nearby point $\xi'$. In $\bR^{2}$ this is evident since the point $\xi'$ may be separated from $z$ by $\Gamma$ itself, making it impossible to connect $z$ to $\xi$. In higher dimensions, it may be the case that we can connect $z$ to $\xi$ by a polygonal path, but possibly not without getting too close to $\Gamma$, which may be the case if $\Gamma$ resembles a Peano curve near $\xi$. In other words, if we had simply taken $C_{\alpha}(\xi)$ to be the entire set in \eqref{eq:cone}, it wouldn't always be possible to build our path in $C_{\alpha}(\xi)$ since whatever component we start building it in may not contain $\xi$ in it's closure (see Figure \ref{fig:whyweneedcones}). Hence, we will have to make due with building bridges to points $\xi'$ near $\xi\in\Delta_{k}$. 

\end{remark}

\pic{4in}{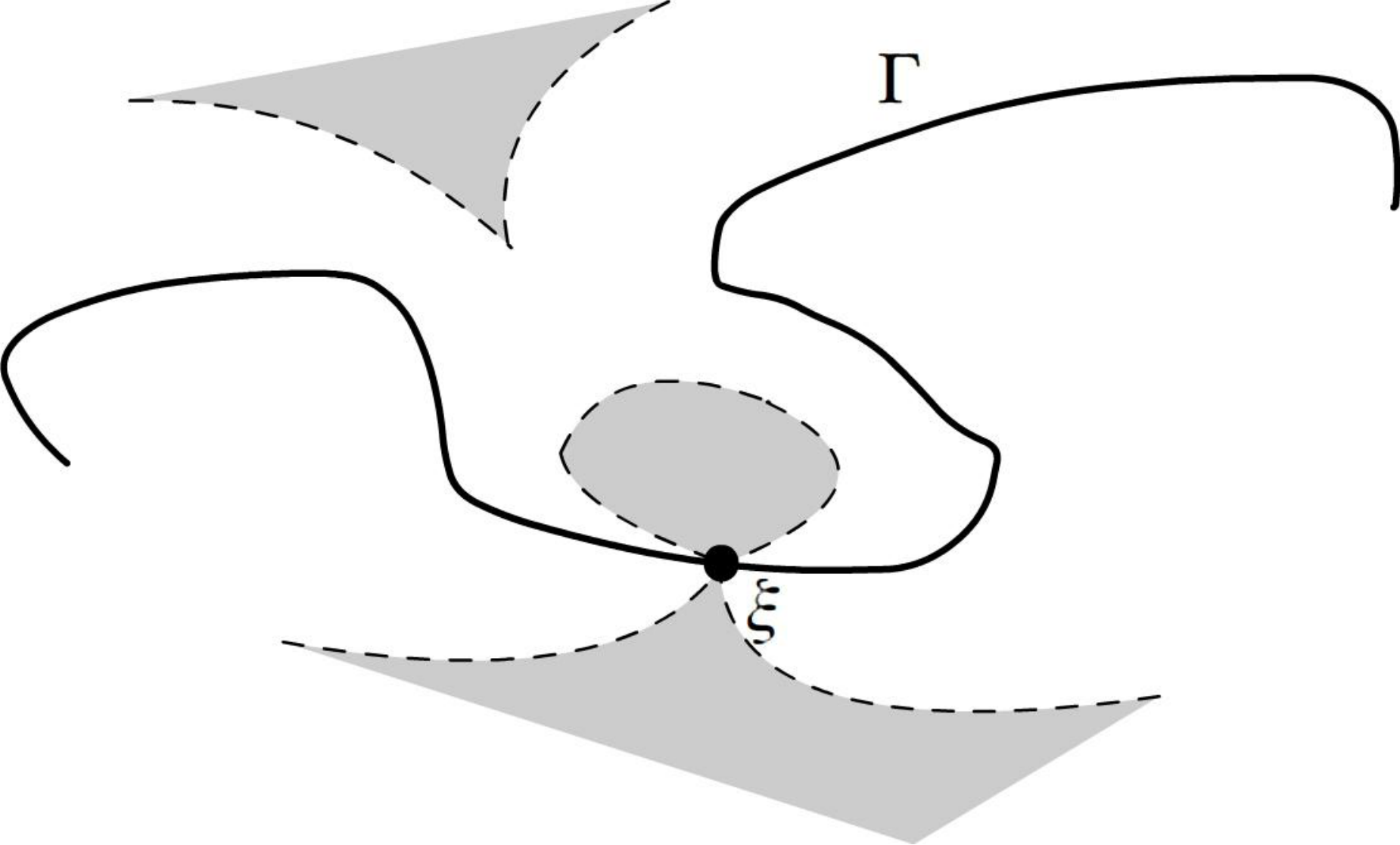}{The shaded regions depict the set $\{z:|z-\xi|<\alpha d(z,\Gamma)\}$, which may have several components, some of which may not contain $\xi$ in their boundaries.}{whyweneedcones}

We continue going through the tree in this manner. 
We stop and declare $Q_{k}$ to be \typeone every time we have 
$Q_{k}$  satisfying 
\begin{equation}
\sum_{j=m+1}^{k-1} \beta(MQ_{j})^{2}\leq \ve< \sum_{j=m+1}^{k} \beta(MQ_{j})^{2}
\label{eq:Reifenberg-stop}
\end{equation}
for a chain of cubes $\{Q_{j}\}_{j=m}^{k}$, so that $\{Q_{j}\}_{j=m+1}^{k-1}$  are all not of type \typetwo, 
where $Q_{m}$ is one of:  $Q_{0}$, a \typetwo cube,  or the previous \typeone; %

Let $z$ be any point in $2B_{k}\cap \cN_{kJ}\cap C_{\alpha}(\xi)$ closest to the center of $B_{k}$. For each $\xi\in \Delta_{kJ}\cap 2B_{k}$, we pick points $\xi'\in B(\xi,M\ve 2^{-k})$ closest to $z$ and connect them to $z$ as before.

We remind the reader that 
if a cube is neither \typeone or \typetwo, call it {\it \typezero}.  Note that all the intermediate cubes 
$\{Q_{j}\}_{j=m+1}^{k-1}$ in equation \eqref{eq:Reifenberg-stop} are \typezero.

\subsubsection{Definition of and construction at  \typetwo balls}
Let $\delta>0$, $k_{1}$, and $C>2$ be numbers to be specified later. 
We recall that  a ball $B=B(\xi_{1},2^{-k})$ is {\it \typetwo} if $\beta(MB)>\delta\ve$. 
In this case, consider all $\xi_{2}\in B(\xi_{1},C2^{-k})\backslash B(\xi_{1},2^{-k+1})\cap \Delta_{k}$ such that 

\begin{description}
\item[\star] $\exists \eta_{j}\in B(\xi_{j}, 2^{-k})\cap\Gamma$, $j=1,2$, and $z\in C_{C\alpha}(\eta_{1})\cap C_{C\alpha}(\eta_{2})\cap B(\xi_1,C2^{-k})\cap B(\xi_{2},C2^{-k})\cap\cN_{k+k_{1}}'\neq\emptyset$ that can be connected to each $\eta_{j}$ by a path $p$ with $\ell(p)\lec 2^{-k}$.
\end{description}

\pic{4.5in}{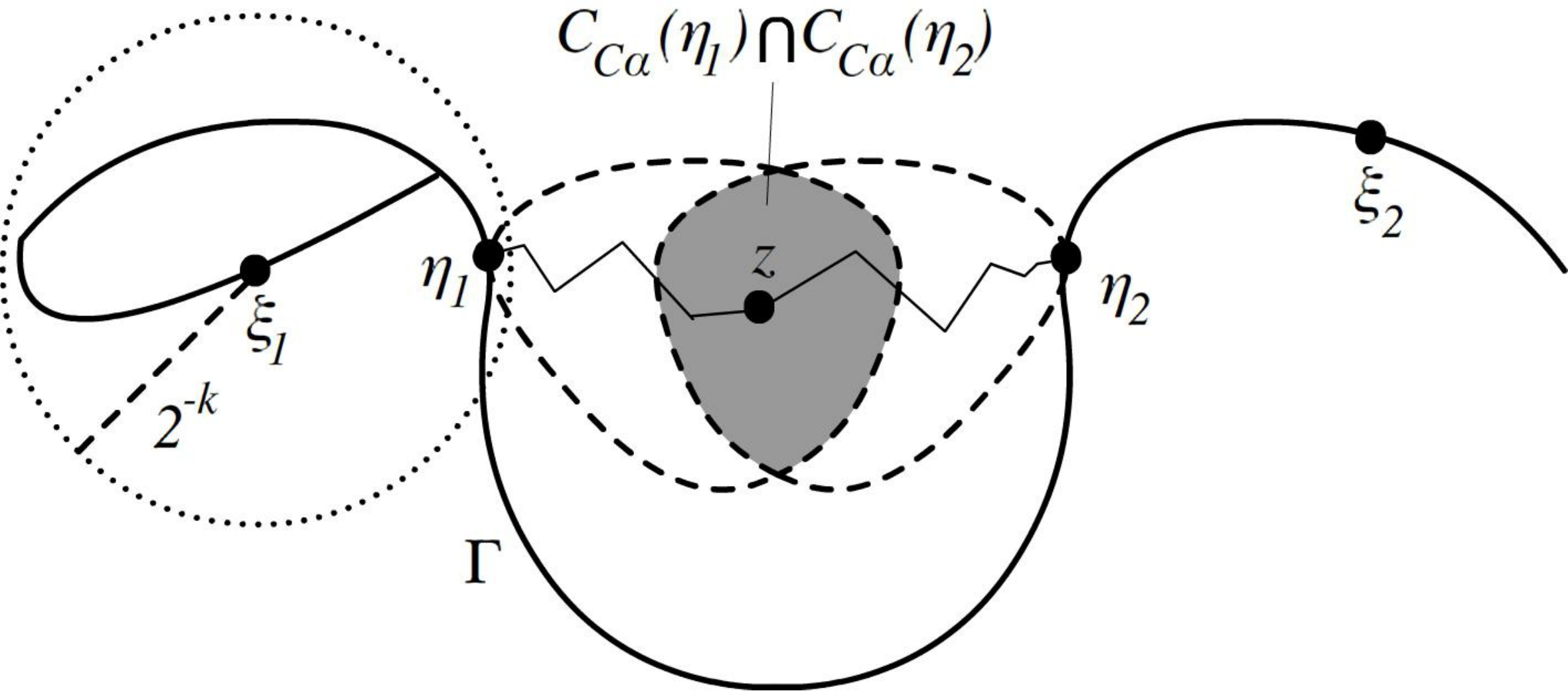}{Building bridges from a \typetwo ball.}{badbridges}

By Lemma \ref{thm:bridgelemma},
the condition implies we may connect the $\eta_{j}$ by a path of length $\lec 2^{-k}$. 
If we choose $\ve$ small enough, then if $B\subseteq NB'$, where 
$2^{-k}\leq |B'| \leq 2^{-k+1}$, $2\leq N\leq 2M$, and $\beta(NB')<\ve$ we may pick $\eta_{j}\in B(\xi_{j},M\ve 2^{-k})$ that satisfy \star as in the case of \typeone balls (since our set is so straight locally, the cones $C_{\alpha}(\eta_{j})$ will have large intersection for our choices of $\xi_{j}$). We then connect those points by a polygonal path. This exception will be needed in Case 2 of Lemma \ref{thm:thefinallemma}.

{
If a $B'$ as above does not exist for $B$, then just add a path connecting points $\eta_{1}$ and $\eta_{2}$ satisfying the condition \star. }

Note that such pairs might not exist in general, but we will show below that they exist often enough.

\begin{remark}
This is the only point in the proof that breaks down in infinite dimensions, since we are controlling the number of bridges we build for a ball $B\in\cB_{k}$ by $\# (\Delta_{k}\cap MB)$, which is uniformly bounded so long as we work in finite dimensions. We do however conjecture that Theorem \ref{t:main-theorem} still holds with constants independent of dimension and in fact holds in the case of an infinite-dimensional Hilbert space.
\label{thm:wesuck}
\end{remark}

\begin{remark}
We note that if, for example, one has the assumption of the set $K$ being $\ve-$Reifenberg flat (with holes), where $\ve$ is sufficiently small, then by the proof of Theorem \ref{thm:TST} (see \cite[Section 4, p. 365]{Schul-TSP}), $\Gamma$ can be constructed to be $C\ve$-Reifenberg flat ($C>0$ being some universal constant), and hence $\# (\Delta_{k}\cap MB)$ will be uniformly bounded. In this case, the proof of Theorem \ref{t:main-theorem} below gives Theorem \ref{t:revisit-reifenberg} with constants independent of the ambient dimension. In fact, in this setting, one can replace $\bR^{d}$ with an infinite-dimensional Hilbert space.
\label{remark:reifenberg-proof} 
\end{remark}

\subsection{Estimating the total length}
\label{s:estimating-total-length}
Let $\tilde{\Gamma}$ be the union of $\Gamma$ with all paths we have added for \typeone cubes and \typetwo balls.
The goal of this section is to bound the length of $\tilde\Gamma$.

\begin{lemma}
Let $\tilde{\Gamma}$ be as above. Then $\cH^{1}(\tilde{\Gamma})\lec_{d}\cH^{1}(\Gamma)$.
\label{thm:length}
\end{lemma}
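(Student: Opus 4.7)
The plan is to split the added length $\cH^{1}(\tilde\Gamma\setminus\Gamma)$ according to whether it was added because of a \typetwo ball or because of a \typeone cube, and to bound each contribution by $\cH^{1}(\Gamma)$ via Theorem \ref{thm:TST}. \emph{\typetwo contribution.} For each \typetwo ball $B=B(\xi_{1},2^{-k})$ the construction adds $\lec_{d}1$ bridges, one per $\xi_{2}\in\Delta_{k}\cap B(\xi_{1},C 2^{-k})$ (a set of bounded cardinality in $\bR^{d}$), each of length $\lec|B|$ by Lemma \ref{thm:bridgelemma}. Since $\beta(MB)>\delta\ve$ on \typetwo balls, one gets $|B|\leq(\delta\ve)^{-2}\beta(MB)^{2}|B|$ and therefore, by Theorem \ref{thm:TST},
\[
\sum_{B\,\typetwo}|B|\ \leq\ (\delta\ve)^{-2}\sum_{B}\beta(MB)^{2}|B|\ \lec\ (\delta\ve)^{-2}\cH^{1}(\Gamma).
\]

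For the \typeone contribution each \typeone cube $Q$ produces at most three bridges of length $\lec|Q|$, so it suffices to show $\sum_{Q\,\typeone}|Q|\lec\cH^{1}(\Gamma)$. By the defining inequality \eqref{eq:Reifenberg-stop}, for each \typeone cube $Q$ with chain $\{Q_{j}\}_{j=m+1}^{k}$ one has $\ve|Q|<|Q|\sum_{j}\beta(MQ_{j})^{2}$; summing over such $Q$ and swapping the order of summation yields
\[
\ve\sum_{Q\,\typeone}|Q|\ \leq\ \sum_{Q'\,\typezero}\beta(MQ')^{2}\,S(Q'),\qquad S(Q'):=\!\!\sum_{\substack{Q\,\typeone\\ Q'\in\text{chain}(Q)}}\!\!|Q|.
\]
For each \typezero cube $Q'$ the cubes contributing to $S(Q')$ form an antichain of descendants of $Q'$: if $Q_{1}\supsetneq Q_{2}$ were both in the sum then the \typeone cube $Q_{1}$ would serve as a stop separating $Q_{2}$ from $Q'$, contradicting $Q'\in\text{chain}(Q_{2})$. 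The essentially disjoint cubes in this antichain lie inside $Q'\subseteq\Gamma$, and by Lemma \ref{Christ-type-cubes-properties} together with the connectedness of $\Gamma$ each Christ-type cube satisfies $|Q|\lec\cH^{1}(Q)$; hence $S(Q')\lec\cH^{1}(Q')$.

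To close the estimate we use that for a \typezero cube $Q'$ the smallness $\beta(MQ')<\delta\ve$ together with Remark \ref{r:tsp_extras} places $\Gamma$ near $Q'$ inside a thin tube about a line, which forces $\cH^{1}(Q')\lec|Q'|$. Substituting and applying Theorem \ref{thm:TST} once more,
\[
\ve\sum_{Q\,\typeone}|Q|\ \lec\ \sum_{Q'}\beta(MQ')^{2}|Q'|\ \lec\ \cH^{1}(\Gamma),
\]
and combined with the \typetwo estimate this gives $\cH^{1}(\tilde\Gamma)\lec_{d}\cH^{1}(\Gamma)$. The hard part will be the length-versus-diameter comparison $\cH^{1}(Q')\lec|Q'|$ on \typezero cubes $Q'$: the Christ-cube $Q'$ is defined intrinsically in $\Gamma$ and has diameter comparable to $2^{-kJ}$, but its $\cH^{1}$-measure could a priori be much larger, so one must carefully combine the $\beta$-smallness with the stronger Hausdorff approximation of $\Gamma$ by a line provided by Remark \ref{r:tsp_extras} in order to pass from flatness in the enclosing ball to length control on the intrinsic object $Q'$.
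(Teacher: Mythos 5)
Your decomposition into \typetwo and \typeone contributions, the $|B|\le(\delta\ve)^{-2}\beta(MB)^{2}|B|$ trick, the Fubini swap yielding $\ve\sum_{Q\,\typeone}|Q|\le\sum_{Q'\,\typezero}\beta(MQ')^{2}S(Q')$, and the observation that the cubes contributing to $S(Q')$ form an antichain of descendants all match the paper's proof exactly. The gap is in how you close the sum $S(Q')$. You pass through $S(Q')\le\cH^{1}(Q')$ (fine, by disjointness and $|Q|\le\cH^{1}(Q)$ for connected sets) and then claim $\cH^{1}(Q')\lec|Q'|$ for \typezero cubes $Q'$; you flag this yourself as the hard step, and indeed it is false in general. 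The property of being \typezero only says $\beta(MQ')<\delta\ve$ \emph{at the single scale of $Q'$}, and Remark \ref{r:tsp_extras} only gives flatness and good Hausdorff approximation by lines ball-by-ball; neither controls the accumulation of small $\beta$'s over many generations inside $Q'$. As the paper itself points out (end of the Outline, referencing Chapter X of \cite{Harmonic-Measure}), one can have $\beta\sim\ve$ at every scale and still accumulate arbitrary length (or even dimension $>1$ without the a priori finiteness assumption), so $\cH^{1}(Q')/|Q'|$ is unbounded over \typezero cubes.

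The paper avoids this by never attempting to bound $\cH^{1}(Q')$. Instead it proves Lemma~\ref{thm:bj} directly: it builds a polygonal curve $P$ from $x$ to $y$ (with $x,y\in Q'$ of maximal mutual distance) through the net points $\Delta_{kJ}$, subdividing only while the relevant descendant cubes remain \typezero. Along this stopping region the chain condition \eqref{eq:Reifenberg-stop} caps the Jones sum by $\ve$, so Lemma~\ref{thm:betasmall} (together with Remark~\ref{thm:case2remark}) gives $\ell(P)\lec|x-y|$. Each maximal non-\typezero descendant cube $Q\in E(x,y)$ then has a designated disjoint subsegment $e_{Q}\subset P$ with $\ell(e_{Q})\sim|Q|$, yielding $\sum_{Q\in E(x,y)}|Q|\lec\ell(P)\lec|x-y|\sim|Q'|$. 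This bounds the antichain sum directly in terms of $|Q'|$, using the stopping-time structure rather than a (false) length-versus-diameter comparison of $Q'$ itself. You should replace your final step with an appeal to Lemma~\ref{thm:bj} applied to a diameter-realizing pair in $Q'$.
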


To prove this, we will need some additional lemmas. The following lemma and it's techniques will be used to help estimate the length we have added on from \typeone cubes. It will also be used later, when we find short paths between points in $\tilde{\Gamma}$.

\begin{lemma}
\label{thm:betasmall}
There exists an $\ve_{0}>0$ and $C>0$ such that for any $K>0$ and any connected compact $\Gamma$ contained in any Hilbert space satisfying 
\begin{itemize}
\item[(i)] for any $r<|\Gamma|$ and $x\in \Gamma$, we have
$$\beta_\Gamma(B(x,r))< \ve_{0},$$ 
\item[(ii)] 
for any $r<|\Gamma|$ and $x\in \Gamma$, we have
that the Hausdorff distance between $B(x,r)\cap \Gamma$ and $B(x,r)\cap L$ is less than $4\ve_{0}|B|$ for some line $L$. 
\item[(iii)] for  $x\in \Gamma$
$$\sum_{k=-\log|\Gamma|}^\infty\beta^2_\Gamma(x,2^{-k}) < K\,,$$
\end{itemize}
we have 
$$\cH^{1}(\Gamma)\lesssim |\Gamma|e^{CK}.$$ 
\end{lemma}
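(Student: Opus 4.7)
The strategy is a dyadic iteration at a single point $x_0 \in \Gamma$, driven by the one-step length comparison
$$
\cH^1\bigl(\Gamma\cap B(x_0, 2r)\bigr)\;\leq\;2\,\cH^1\bigl(\Gamma\cap B(x_0, r)\bigr)\,\bigl(1+ C\beta_\Gamma^2(x_0, 2r)\bigr),
$$
valid for every $x_0\in\Gamma$ and every $r\in(0, |\Gamma|/2)$.

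\textit{Single-scale comparison.} By hypothesis~(ii), $\Gamma\cap B(x_0, 2r)$ lies in the $8\ve_0 r$-neighborhood of some affine line $L$. The orthogonal projection $\pi_L$ onto $L$ is $1$-Lipschitz, and on rectifiable pieces of $\Gamma$ its inverse stretches length by at most $1+O(\beta_\Gamma^2(x_0, 2r))$ (via the elementary bound $\sqrt{1+t^2}\leq 1+t^2/2$ applied to the ``slope'' of $\Gamma$ over $L$). Since $\pi_L(\Gamma\cap B(x_0, 2r))$ is contained in a subinterval of $L$ of length at most $4r$, this bounds the numerator by $4r(1+O(\beta^2))$. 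In the same spirit, the bilateral flatness in~(ii) guarantees that the main connected component of $\Gamma\cap B(x_0, r)$ shadows a subinterval of $L$ of length at least $2r(1-O(\beta_\Gamma^2(x_0, r)))$, bounding the denominator below. Hypothesis~(i) and the smallness of $\ve_0$ confine any stray components of $\Gamma\cap B(x_0, 2r)$ to the same tube, contributing only an $O(\beta^2 r)$ correction.

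\textit{Dyadic iteration.} Iterating the one-step comparison at the fixed $x_0\in\Gamma$ across scales $|\Gamma|, |\Gamma|/2, |\Gamma|/4, \dots$ yields
$$
\cH^1\bigl(\Gamma\cap B(x_0, |\Gamma|)\bigr)\;\leq\;2^k\,\cH^1\bigl(\Gamma\cap B(x_0, |\Gamma|/2^k)\bigr)\,\prod_{j=1}^{k}\bigl(1+C\beta_\Gamma^2(x_0, C\cdot 2^{-j}|\Gamma|)\bigr).
$$
As $k\to\infty$, hypothesis~(iii) forces $\beta_\Gamma(x_0, 2^{-k})\to 0$, which combined with~(ii) yields $\cH^1(\Gamma\cap B(x_0, |\Gamma|/2^k))/(2\cdot|\Gamma|/2^k)\to 1$, so the leading term $2^k\cH^1(B(x_0, |\Gamma|/2^k))$ tends to $2|\Gamma|$. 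The infinite product is bounded by $\exp(C\sum_k\beta_\Gamma^2(x_0, 2^{-k}))\leq e^{CK}$, again by~(iii). Since $\Gamma\subset B(x_0, |\Gamma|)$ for any $x_0\in\Gamma$, we conclude $\cH^1(\Gamma)\leq 2|\Gamma|e^{CK}$, as required.

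\textit{Main obstacle.} The hard part is the single-scale comparison, specifically ruling out misbehavior from stray components of $\Gamma\cap B(x_0, 2r)$ that are disconnected from the main piece or that make $\pi_L$ non-injective on $\Gamma\cap B(x_0, 2r)$. The bilateral Hausdorff flatness built into hypothesis~(ii), together with the smallness of $\ve_0$, is precisely what pins $\Gamma\cap B(x_0, 2r)$ close to a single line segment of $L$ and bounds the multiplicity of $\pi_L$; this makes the argument dimension-free and consistent with the Hilbert-space formulation of the lemma.
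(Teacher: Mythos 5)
Your one-step comparison is the crux, and as stated it is false. You claim
$$
\cH^1\bigl(\Gamma\cap B(x_0,2r)\bigr)\leq 2\,\cH^1\bigl(\Gamma\cap B(x_0,r)\bigr)\bigl(1+C\beta_\Gamma^2(x_0,2r)\bigr),
$$
with the correction controlled \emph{only} by the flatness at the fixed center $x_0$ and scale $2r$. But $\beta_\Gamma(x_0,2r)$ merely confines $\Gamma\cap B(x_0,2r)$ to a thin tube; it says nothing about the arclength of $\Gamma$ inside the annulus $B(x_0,2r)\setminus B(x_0,r)$, which can be wiggling at scales far smaller than $r$, centered far from $x_0$. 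Concretely: keep $\Gamma$ straight in $B(x_0,r)$ and replace the arc in $B(x_0,2r)\setminus B(x_0,1.5r)$ by a sawtooth of tiny amplitude $\ll\ve_0 r$ and huge total length. Then $\beta_\Gamma(x_0,2r)$ stays small and $\cH^1(\Gamma\cap B(x_0,r))\approx 2r$, yet $\cH^1(\Gamma\cap B(x_0,2r))$ is as large as you like, so the right side cannot dominate the left. (Such a $\Gamma$ eventually violates hypotheses (i) and (iii) at \emph{other} centers, but those hypotheses never enter your one-step inequality, which is stated purely in terms of $\beta(x_0,2r)$.) Relatedly, the step ``hypothesis (ii)\,$\ldots$\,bounds the multiplicity of $\pi_L$'' is not correct: two-sided Hausdorff closeness of $\Gamma\cap B$ to a line segment does not prevent $\Gamma$ from backtracking along the tube many times, so the projection $\pi_L$ is in general badly non-injective and the ``stretch factor $1+O(\beta^2)$'' argument does not give a length bound.

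Because of this, the dyadic product you form samples the Jones function at a single point $x_0$ only, and hypothesis (iii) at one point cannot control $\cH^1(\Gamma)$: one genuinely needs $\beta$-information along \emph{every} branch into $\Gamma$. The paper's proof does exactly this, but via a different mechanism: it builds a nested family of inscribed polygons $P_n$, at each stage inserting a far-from-endpoints point of $\Gamma$ into each edge, so that the length of a subdivided edge increases by a factor $1+C\beta^2$ where the $\beta$ is evaluated on a ball \emph{at the scale and location of that edge}. Writing the length as $\ell(P_n)\leq\sum_\omega a_\omega\prod_k(1+C\beta[I_{\omega_k}])$ with $\sum a_\omega=1$, each multiplicative chain follows a descending sequence of balls shrinking to some point of $\Gamma$, where (iii) applies, giving the uniform bound $e^{CK}$; property (i) forces the vertex sets to be nets for $\Gamma$, so $\ell(P_n)\to\cH^1(\Gamma)$ from below. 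You would need something analogous — a decomposition into pieces each of which carries its own $\beta$ at the right scale and center — rather than a single-center doubling estimate.
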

Note that assumption $(ii)$ is assured by Remark \ref{r:tsp_extras}.
\begin{remark}
The assumptions $(i)$ and $(ii)$ above may be omitted, but that would require a longer, more complicated proof. This lemma does not appear in the literature for infinite dimensional Hilbert spaces.  However, for the plane, this is shown in \cite{BJ}. Also see Theorem X.2.1 in \cite{Harmonic-Measure} as well as \cite{Le}. 
\end{remark}
\begin{proof}
Without loss of generality, we assume $|\Gamma|=1$.
Fix $C'$ such that $\frac1{10 \ve}>C'\geq 1$.
We inductively construct a sequence of polygonal curves $P_n$ as follows.
Let $x_0,x_1\in\Gamma$ be such that $\dist(x_0,x_1)=|\Gamma|$.
We set $P_0=I_\emptyset=[x_0,x_1]$ and get $\ell(P_0)=1$.
By property (ii), we have the existence of $x_2\in \cN_{\ve_{0}}{[x_0,x_1]}$, the $\ve_{0}$ neighborhood of the interval $[x_0,x_1]$ satisfying
$|x_1-x_2|\sim_{C'}|x_0-x_2|$.
We set $P_1=I_0\cup I_1=[x_0,x_2]\cup[x_2,x_1]$.
By the Pythagorean theorem we have that 
$$\ell(P_1)=|x_2-x_0|+ |x_1-x_2| \leq 1 +C\beta_{\{x_0,x_2,x_1\}}^2(x_2,|x_0-x_1|)\,.$$
The constant $C$ here depends only on the choice of $C'$ above.
We will abuse notation and, for a triple $x_0,x_1,x_2$ as above,   denote   
$$\beta[x_0,x_1]:=\beta_{\{x_0,x_2,x_1\}}^2(x_2,|x_0-x_1|)\,.$$
We may now iterate this process on each of the intervals $[x_0,x_2]$ and $[x_2,x_1]$, getting
a polygonal curve $P_2$ satisfying
\begin{eqnarray*}
\ell(P_2)&\leq&
\ell(I_0) (1+C\beta^2[x_0,x_2]) + 
\ell(I_1)(1+C\beta^2[x_2,x_1])\\
&=&
\ell(P_1)\bigg(
\frac{\ell(I_0)}{\ell(P_1)}(1+C\beta^2(I_0)) + 
\frac{\ell(I_1)}{\ell(P_1)}(1+C\beta^2(I_1)) \bigg)\\
&\leq& 
\ell(P_0) (1+C\beta^2(I_\emptyset))\bigg(\sum_{i=0}^{1}
	\frac{\ell(I_i)}{\ell(P_1)}(1+C\beta^2(I_i))  \bigg)\,.
\end{eqnarray*}

Continuing inductively, we get by at the $n^{th}$ step a polygon with $2^n$ edges, satisfying
$$
\ell(P_n)\leq\sum_{\omega\in\{0,1\}^n} a_\omega \Pi_{k=0}^n (1+C\beta[I_{\omega_k}])$$
where $\omega_k$ is the truncation to the first $k$ elements of $\omega$, and
$1=\sum_{\omega\in\{0,1\}^n} a_\omega$.
Note that 
$$\Pi_{k=0}^n (1+C\beta[I_{\omega_k}])\leq e^{CK}\,.$$
Also note that, by property (i),  at the $n^{th}$ step, the vertices of $P_n$ form a $\bigg(\frac{1+C'}{C'}\bigg)^{-n}$ net for $\Gamma$.

\end{proof}

\begin{remark}
A few things things should be mentioned about the proof:

\begin{enumerate}
\item The choice of $x_{2}$ between $x_{0}$ and $x_{1}$ is not important so long as we pick it far from $x_{0}$ and $x_{1}$, i.e. $d(x_{2},\{x_{0},x_{1}\})\gec |x_{0}-x_{1}|$. This ensures that our sequence of paths will converge to $\Gamma$.

\item In the construction in the proof, we could have stopped iterating at a finite polygonal path, or more generally, cease adjusting our sequence of curves on some 
collection of segments. The resulting path, by virtue of being a polygon or having corners, would not satisfy the conditions of the theorem at the vertices, however it  would still satisfy the conclusion of the lemma. 

\item Also note that condition (iii) can be replaced by
\[\sum_{kJ>-\log |\Gamma|} \beta_{\Gamma}^{2}(x,M2^{-kJ})<K\]
since, for $M$ large enough and $\ve_0$ small enough, this will imply (iii) (with perhaps a different $K$).

We mention these facts since we will want to use the construction in this proof to construct polygonal paths with vertices in $\bigcup \Delta_{kJ}$ using the \typeone condition on cubes. 
\end{enumerate}
\label{thm:case2remark}
\end{remark}

\begin{lemma} 
Let $E(x,y)$ be the collection of maximal cubes $Q$ with $|Q|\leq |S(x,y)\cap \Gamma|$, centers in $\cnj{S(x,y)}$, and are not \typezero (so no cube in $E(x,y)$ is properly contained in another cube in $E(x,y)$). There exists $\ve_{0}>0$ such that for any $x,y\in \Gamma$, if $\beta(R_{x,y})<\ve_{0}$, then
 \[\sum_{Q\in E(x,y)}|Q|\lec |x-y|.\]
 \label{thm:bj}
\end{lemma}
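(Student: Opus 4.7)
The plan is to split $E(x,y)=E_{1}\sqcup E_{2}$ according to whether a cube is \typeone or \typetwo, bound each subsum using the stopping condition that defines its type, and then reduce the resulting estimate to a localized Jones $\beta^{2}$-sum controlled by $|x-y|$.

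For $Q\in E_{2}$, the defining condition $\beta(MQ)>\delta\ve$ immediately yields
\[
\sum_{Q\in E_{2}}|Q|\;\leq\;(\delta\ve)^{-2}\sum_{Q\in E_{2}}\beta(MQ)^{2}|Q|.
\]
For $Q=Q_{k}\in E_{1}$ with chain $\{Q_{j}\}_{j=m+1}^{k}$ satisfying $\sum_{j}\beta(MQ_{j})^{2}>\ve$, we use $|Q_{k}|\leq|Q_{j}|$ for $j\leq k$ to obtain
\[
|Q_{k}|\;\leq\;\frac{1}{\ve}\sum_{j=m+1}^{k}\beta(MQ_{j})^{2}|Q_{j}|.
\]
Summing over $Q\in E_{1}$ and interchanging the order of summation produces $\tfrac{1}{\ve}\sum_{P}\beta(MP)^{2}|P|\cdot N(P)$, where $N(P)$ counts how many chains of $E_{1}$-members pass through $P$. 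The $E_{1}$-members that contribute to $N(P)$ are interior-disjoint subsets of $P$ (by Lemma \ref{Christ-type-cubes-properties}(iv) and maximality in $E(x,y)$), and each such $Q$ satisfies $\cH^{1}(\Gamma\cap Q)\gec|Q|$ via Lemma \ref{Christ-type-cubes-properties}(iii) and the connectedness of $\Gamma$. This lets one absorb the multiplicity $N(P)$ into an $\cH^{1}$-bound on $\Gamma\cap P$.

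Combining the two cases, the total $\sum_{Q\in E(x,y)}|Q|$ is bounded by a constant times a localized Jones $\beta^{2}$-sum $\sum_{P}\beta_{\Gamma}(MP)^{2}|P|$, where $P$ ranges over cubes essentially contained in a bounded enlargement $R'$ of $R(x,y)$. By the converse direction of Theorem \ref{thm:TST}, applied to a suitable connected near-linear piece of $\Gamma\cap R'$, this sum is controlled by $\cH^{1}$ of that piece, which in turn is $\lec|x-y|$: because $\beta_{\Gamma}(R(x,y))<\ve_{0}$, Remark \ref{r:tsp_extras} guarantees $\Gamma\cap R(x,y)$ lies in a thin tube about a line segment joining points near $x$ and $y$ and contains a component of diameter $\sim|x-y|$. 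For such a (locally) Reifenberg-flat portion the length is comparable to the chord length $|x-y|$.

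The hard part is precisely this localization: converting the single-scale hypothesis $\beta_{\Gamma}(R(x,y))<\ve_{0}$ into a local $\cH^{1}$-bound and a local version of the traveling salesman inequality that is a priori only global. In particular, some care will be needed for chain-cubes $P$ in the \typeone estimate whose diameter exceeds $|x-y|$ (since these may stick out beyond $R(x,y)$), and for components of $\Gamma\cap R(x,y)$ that are not the large flat one produced by Remark \ref{r:tsp_extras}; in both cases the chain stopping condition forces any accumulated $\beta^{2}$-mass of size $>\ve$ to be traded, via the disjointness and $\cH^{1}(\Gamma\cap Q)\gec|Q|$ estimate above, for genuine length inside the thin tube, which Reifenberg flatness caps at a multiple of $|x-y|$.
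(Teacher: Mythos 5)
Your approach is genuinely different from the paper's, and unfortunately it has a gap at exactly the step you flag as "the hard part," plus one earlier step that is already problematic.

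The paper does not try to turn the stopping condition into a $\beta^{2}$-sum bound and then localize the Traveling Salesman sum. Instead it uses Lemma \ref{thm:betasmall} (and its Remark) \emph{constructively}: starting from the segment $[x_{1},y_{1}]$ it iteratively inserts net points, but \emph{stops subdividing} an edge as soon as an endpoint's cube is no longer \typezero (equivalently, as soon as it meets a cube of $E(x,y)$). Because the stopping condition \eqref{eq:Reifenberg-stop} guarantees that the accumulated $\sum\beta(MQ_{j})^{2}$ from the scale of $|x-y|$ down to any stopping cube is at most $\ve$, Lemma \ref{thm:betasmall}/Remark \ref{thm:case2remark} gives $\ell(P)\lec e^{C\ve}|x-y|\lec|x-y|$ for the resulting polygon $P$. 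Each $Q\in E(x,y)$ then owns a private edge segment $e_{Q}\subset P$ with $\ell(e_{Q})\sim|Q|$, and these are pairwise disjoint by construction; summing $\ell(e_{Q})\leq\ell(P)$ gives the claim. The key is that the stopping condition is used as an \emph{upper} bound ($\sum\beta^{2}\leq\ve$ along each descending branch, giving length control of the polygon) rather than as a lower bound ($\ve<\sum\beta^{2}$) to be dualized.

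The concrete gap in your argument is the multiplicity count after interchanging sums. You write $\sum_{Q\in E_{1}}|Q|\leq\ve^{-1}\sum_{P}\beta(MP)^{2}|P|\cdot N(P)$ with $N(P)=\#\{Q\in E_{1}:P\in\mathrm{chain}(Q)\}$, and then claim that the disjointness of the contributing $Q$'s inside $P$ together with $\cH^{1}(\Gamma\cap Q)\gec|Q|$ lets you absorb $N(P)$. But disjointness plus lower regularity only controls $\sum_{Q\ \mathrm{contributing}}|Q|\lec\cH^{1}(\Gamma\cap P)$, not the \emph{number} $N(P)$ of contributing cubes, and these can be of vastly different (in particular, arbitrarily small) scales; $N(P)$ has no a priori bound in terms of $d$ or $|x-y|$. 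Even the natural repair, namely keeping the factor $|Q_{k}|$ rather than upgrading to $|Q_{j}|$ so the interchanged coefficient becomes $\beta(MP)^{2}\sum_{Q}|Q|\lec\beta(MP)^{2}|P|$ (or $\lec\beta(MP)^{2}|x-y|$), does not close the argument: the resulting $\sum_{P}\beta(MP)^{2}$ (or $\sum_{P}\beta(MP)^{2}|P|$) runs over every cube in every chain, which is a full tree whose size below scale $|x-y|$ grows like $|x-y|\cdot 2^{N}$ when the stopping cubes sit at scale $2^{-N}$, so this sum can be arbitrarily large even while each individual chain's $\beta^{2}$-total is $\sim\ve$. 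This is not merely a localization difficulty: the object $\sum_{P}\beta(MP)^{2}|P|\cdot N(P)$ simply is not comparable to $|x-y|$, because the branching of the cube tree is unbounded. The path-based argument sidesteps this entirely, since the polygon $P$ visits each stopping cube exactly once and the length bound comes from the contractive factor $e^{C\ve}$ along each branch rather than from a $\beta^{2}$-budget summed over all branches.

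A secondary issue, which your paragraph on localization correctly anticipates, is that the chain cubes $Q_{j}$ for $m<j<k$ may have diameters much larger than $|x-y|$ (the previous stopping cube $Q_{m}$ is not required to sit at scale $|x-y|$), so even without the multiplicity problem one cannot reduce to a $\beta^{2}$-sum over cubes contained in a bounded dilate of $R(x,y)$; the single-scale hypothesis $\beta(R_{x,y})<\ve_{0}$ says nothing about $\beta$ at those larger scales.
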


\begin{proof}
Let $\Gamma_{x,y}=\cnj{S(x,y)}\cap \Gamma$. 
Let 
\[\cQ_{x,y}=\{Q\in \cQ: Q \mbox{ has center in }\Gamma_{x,y} \mbox{ and } |Q|\leq |\Gamma_{x,y}|\}.\] 
Let $k$ be the largest number for which there is no $Q\in \cQ_{k}$ contained in $\Gamma_{x,y}$. Construct a path as in Lemma \ref{thm:betasmall} as follows. For  $j\geq 1$, choose points $x_{j}$ and $y_{j}$ in $\Delta_{(k+j)J}\cap\Gamma_{x,y}$ closest to $x$ and $y$ respectively 
(so $x_{j}\rightarrow x$ and $y_{j}\rightarrow y$ as $j\rightarrow\infty$).

Note that $[x_{1},y_{1}]\cup \bigcup_{j\geq 1}[x_{j},x_{j+1}]\cup [y_{j},y_{j+1}]$ is a connected path connecting $x$ to $y$. Let $P_{0}=[x_{1},y_{1}]$

Pick $x'\in \Delta_{(k+1)J}\cap \Gamma_{x,y}$ closest to the midpoint of $[x_{1},y_{1}]$, replace $P_{0}$ with $P_{1}=[x_{1},x']\cup [x',y_{1}]$. Continue this process, suppose we have a path $P_{n}$ with edge $[a,b]$, $a,b\in \Delta_{(k+1)J}$ and there is a point $x'\in \Delta_{(k+1)J}$ between $a$ and $b$ closest to the midpoint between $a$ and $b$ (so $d(x',\{a,b\})\gec |a-b|$). Then replace that edge with $[a,x']\cup [x',b]$ to form $P_{n+1}$. If there is no such point between $a$ and $b$, then leave that edge. 

In the end, we have constructed a path $P^{1}$ with vertices all points in $\Delta_{(k+1)J}\cap\Gamma_{x,y}$. 
Repeat the process as follows. 
Take an edge $[a,b]$, $a,b\in \Delta_{(k+1)J}$ such that either $a$ or $b$ is the center of a cube $Q\in \cQ_{k+1}\cap \cQ_{x,y}$ that is \typezero and is not the child of a \typeone or \typetwo cube in $\cQ_{x,y}$. 
On this edge, perform the same as above, i.e. replace it with $[a,x']\cup [x',b]$ where $x'\in \Delta_{(k+2)J}\cap \Gamma_{x,y}$ etc. This gives $P^2$.
Continue inductively to get a sequence of paths $P^{n}$ that converge to a path $P_{[x_{1},y_{1}]}$. By Lemma \ref{thm:betasmall} and Remark \ref{thm:case2remark}, $\ell(P_{[x_{1},y_{1}]})\lec |x_{1}-y_{1}|$ if $\ve_{0}$ is small enough (so that (i) of Lemma \ref{thm:betasmall} is satisfied). 

We can do similarly for each segment $[x_{j},x_{j+1}]$ and $[y_{j},y_{j+1}]$ to make paths $P_{[x_{j},x_{j+1}]}$ and $P_{[y_{j},y_{j+1}]}$ respectively. Let 
\[P=[x_{1},y_{1}]\cup\bigcup_{j\geq 1}P_{[x_{j},x_{j+1}]}\cup P_{[y_{j},y_{j+1}]}\]
and 
\[\ell(P)\lec |x_{1}-y_{1}|+\sum_{j\geq 1}( |x_{j}-x_{j+1}|+|y_{j}-y_{j+1}|)\lec |x-y|\,,\]
where the last inequality is just the summing of a geometric sum.
Note that the center $a$ of any $Q\in E(x,y)$ is the endpoint of an edge $[a,b]\subseteq P$ of length $\ell([a,b])\sim |Q|$. Let $e_{Q}=(a,\frac{a+b}{2})$. Then $\{e_{Q}:Q\in E(x,y)\}$ is a disjoint collection of segments in $P$ with $\ell(e_{Q})\sim |Q|$, and thus
\[\sum_{Q\in E(x,y)}|Q|\lec \ell(P)\lec |x-y|,\]
which proves the claim.

\end{proof}

\begin{proof}[Proof of Lemma \ref{thm:length}]

Recall that any path $p$ added on for a \typetwo ball $B$ has length $\lec |B|$. 
Thus, by Theorem \ref{thm:TST}, the total lengths of all paths $p$ added may be bounded as follows.
\begin{align*}
\sum_{B\mbox{ \typetwo }}\sum_{p\mbox{ path for }B}\ell(p) 
 \lec \sum_{B\mbox{ \typetwo }}|B|
 & 
\leq \sum_{B\mbox{ \typetwo }}\frac{\beta(MB)^{2}}{(\delta\ve)^{2}}|B| \\
& \lec \cH^{1}(\Gamma).
\end{align*}

For a \typeone cube $Q$, let $p_{Q}$ be the path constructed for the ball $Q$ as described earlier. These have length $\lec |Q|$. Note that for each \typeone cube $Q$, there is a chain $A(Q)=\{Q_{m+1},...,Q_{k}\}$ with $Q_{k}=Q$, $B_{m+1},...,B_{k-1}$ all \typezero, and satisfying \eqref{eq:Reifenberg-stop}. 
Let $D(Q)$ be the set of \typeone descendants of $Q$ that are connected to $Q$ by a chain of \typezero cubes. Then
\begin{align*} \sum_{Q\mbox{ \typeone }}\ell(p_{Q}) 
& \lec \sum_{Q \mbox{ \typeone}}|Q| 
\leq \frac{1}{\ve}\sum_{Q \mbox{ \typeone}}\sum_{Q'\in A(Q)}|Q|\beta(MQ')^{2}\\
& =\frac{1}{\ve}\sum_{Q'\mbox{ \typezero }}\beta(MQ')^{2}\sum_{Q\in D(Q')}|Q|. \end{align*}

We claim that
\[\sum_{Q\in D(Q')}|Q|\lec |Q'|,\]
in which case the lemma will follow by Theorem \ref{thm:TST}. This follows by applying Lemma \ref{thm:bj} with $x$ and $y$ being points in $Q'$ that maximize $|x-y|$. 
\end{proof}


\section{Route finding: there are enough shortcuts}\label{s:route_finding}

We now turn to proving that $\tilde\Gamma$ is quasiconvex, that is, 
any two arbitrary points $x,y\in \tilde{\Gamma}$ may be connected by a curve in 
$\tilde{\Gamma}$ of length $\lesssim |x-y|$.  
This is the final step in proving  Theorem \ref{t:main-theorem} (as well as  Theorem \ref{t:revisit-reifenberg}).

In the first section, we  reduce to the case when $x,y\in\Gamma$, using the properties in Lemma \ref{thm:bridgelemma}. After that, we  state and prove the main lemma. Our main lemma  says that between any points $x,y\in\Gamma$ there are bridges connecting points that are almost collinear with $x$ and $y$. In the last part of this section, we  pull these results together and conclude the main theorem.

\subsection{The Case of $x$ or $y$ not in $\Gamma$}

\begin{lemma}
\label{l:segment-lemma}
There is a universal constant $a>0$ such that if $s_{1}$ and $s_{2}$ are two nonadjacent segments in $\tilde{\Gamma}\backslash \Gamma$, then $d(s_{1},s_{2})\geq a\cdot \min\{|s_{1}|,|s_{2}|\}$.
\end{lemma}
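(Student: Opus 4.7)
The plan is to exploit property (c) of Lemma \ref{thm:bridgelemma}, which asserts that for any edge $[x,y]$ of any polygonal bridge, the inflated ball $R_{\lambda}(x,y)$ meets $\cN$ only in $\{x,y\}$. Let $s_i=[u_i,v_i]$, $i=1,2$, with $|s_1|\le|s_2|$. Since $s_1,s_2$ are nonadjacent, $\{u_1,v_1\}\cap\{u_2,v_2\}=\emptyset$, and provided all four endpoints lie in $\cN$, property (c) applied to each segment yields
\[u_2,v_2\notin R_{\lambda}(u_1,v_1)\quad\text{and}\quad u_1,v_1\notin R_{\lambda}(u_2,v_2).\]
I would first reduce to this case by handling the possibility that one endpoint of some $s_i$ is the $\Gamma$-tip $\xi'$ of a bridge; in that case the adjacent edge of the same bridge still has both endpoints in $\cN$ at a comparable scale, so a slight loss in the final constant $a$ lets us replace $s_i$ by its neighbor and reduce to the generic situation.

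The core argument proceeds by a dichotomy. If $s_2$ does not enter $R_{\lambda}(u_1,v_1)$, then since $s_1\subset B(m_1,\tfrac12|s_1|)$ and $s_2$ lies outside $B(m_1,\tfrac{1+\lambda}{2}|s_1|)$, the annular gap gives
\[\dist(s_1,s_2)\ge\tfrac{\lambda}{2}|s_1|,\]
and we are done with $a=\lambda/2$. A completely symmetric argument applies if $s_1$ misses $R_{\lambda}(u_2,v_2)$; in that case one even obtains $\dist(s_1,s_2)\ge\tfrac{\lambda}{2}|s_2|\ge\tfrac{\lambda}{2}|s_1|$.

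The hard remaining case is when $s_1$ pierces $R_{\lambda}(u_2,v_2)$ and $s_2$ pierces $R_{\lambda}(u_1,v_1)$ simultaneously. Here my plan is to combine the two exclusions quantitatively using the parallelogram identity: adding $|u_1-m_2|^2\ge r^2$ and $|v_1-m_2|^2\ge r^2$ with $r=\tfrac{1+\lambda}{2}|s_2|$ gives
\[|m_1-m_2|^2\ge r^2-\tfrac14|s_1|^2\ge\tfrac{2\lambda+\lambda^2}{4}|s_2|^2,\]
so $|m_1-m_2|\gec\sqrt{\lambda}\,|s_2|$. The objective is then to show that any approach of $s_1$ to $s_2$ within distance $o(|s_1|)$ would, by the covering/net density of $\cN_k$ together with part (d) of Lemma \ref{thm:bridgelemma}, force a grid vertex at an intermediate scale to sit inside one of the forbidden balls $R_{\lambda}(u_1,v_1)$ or $R_{\lambda}(u_2,v_2)$, contradicting (c).

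The main obstacle is this final case. The difficulty is that the midpoint separation $|m_1-m_2|\gec\sqrt\lambda\,|s_2|$ alone does not directly bound $\dist(s_1,s_2)$ from below when $|s_2|\gg|s_1|$, since $s_2$ can extend a long way from $m_2$ toward $m_1$. Overcoming this will require a careful chord-geometry analysis of how a long segment passing through a small ball must be oriented, combined with the constraint that the midpoint of the chord of $s_2$ cut by $R_{\lambda}(u_1,v_1)$ is itself within $O(|s_1|)$ of a vertex of $\cN$ at the corresponding scale. The net density will then produce a genuine vertex in a forbidden region, yielding the required contradiction and hence $\dist(s_1,s_2)\ge a|s_1|$ for a universal $a=a(\lambda,k_0)>0$.
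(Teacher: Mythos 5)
Your diagnosis of the relevant tool (property (c) of Lemma \ref{thm:bridgelemma}, i.e. the forbidden balls $R_{\lambda}$) and your two ``easy cases'' are correct, and the parallelogram computation $|m_1-m_2|\gec\sqrt{\lambda}\,|s_2|$ is a sound consequence of $u_1,v_1\notin R_{\lambda}(u_2,v_2)$. But your proposal stops exactly where the proof has to do its real work, and the route you sketch for the hard case is not the one that succeeds. There is no need for net density or for producing an intermediate-scale vertex of $\cN$ inside a forbidden ball; the contradiction comes entirely from the four given endpoints.

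The paper's argument for the hard case runs as follows. Assume for contradiction that $\dist(s_1,s_2)<a|z-w|$ where $s_1=[x,y]$ is the longer segment and $s_2=[z,w]$ the shorter. First one shows (using the empty balls $B(x,cr)$, $B(y,cr)$ from the construction and property (c)) that the points $x'\in s_1$, $z'\in s_2$ realizing the distance satisfy $\dist(x',\{x,y\})\gec |x-y|$. Now translate $s_2$ by $v=x'-z'$: since $|v|<a|z-w|$ is tiny, $w+v,z+v\notin R_{\lambda/2}(x,y)$, and the shifted segment passes through $x'\in[x,y]$, hence cuts a nondegenerate chord $(s,t)$ of the ball $R(x,y)$. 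If that chord contains the midpoint $\tfrac{x+y}{2}$, then the shifted segment is a chord of $B(\tfrac{x+y}{2},\tfrac{1+\lambda/2}{2}|x-y|)$ through the center and $|w-z|\geq(1+\lambda/2)|x-y|>|x-y|$, contradicting that $s_2$ is the shorter segment. If it does not, then the chord sits near one end, say $x$, and one checks $x\in R(s,t)\subseteq R_{\lambda/2}(w+v,z+v)\subseteq R_{\lambda}(w,z)$, i.e. an endpoint of $s_1$ lies in the forbidden ball of $s_2$ --- contradicting (c) applied to $s_2$. This is precisely the asymmetry that your midpoint bound alone could not deliver: the contradiction in the hard case is that an endpoint of the \emph{longer} segment is captured by the forbidden region of the \emph{shorter} segment, which is visible only after the coplanar shift lets you compare the chord geometry of $[x,y]$ and $[z,w]$ directly. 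The step you flagged as ``the main obstacle'' is therefore a genuine gap: as written, your plan would need a new idea, and the idea the paper uses (translate to coplanarity, then case on the chord through the midpoint) is different from the covering/net-density argument you propose.

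Two smaller remarks. Your preliminary reduction to the case that all four endpoints lie in $\cN$ is reasonable and is tacitly assumed in the paper as well. Also note that the quantitative form of the dichotomy (``$(s,t)$ avoids the midpoint'') is what converts the qualitative conclusion into the uniform constant $a$; keeping track of how far the chord must stay from $\tfrac{x+y}{2}$ is where the $\lambda$-dependence of $a$ enters, and your $\sqrt{\lambda}$ estimate, while true, is used nowhere in the actual argument.
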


\begin{proof}
Let $[x,y]$ and $[z,w]$ be two non-adjacent segments such that $|x-y|\geq |z-w|$, $x,y,z,w\in \cN$, and suppose there are points $x'$ and $z'$ in each of these segments respectively such that $|x'-z'|=d([x,y],[z,w])<a|z-w|$, where $a$ is a small constant we will pick shortly. 
Set $r=|x-y|$. 

Recall that by the definition of $\cN$,  $w,z\not\in B(x,cr)$.
If $x'\in B(x,\frac{c}{2}r)$, we know
\[x\in R(s,t)\subseteq R(z,w)\subseteq R_{\lambda}(z,w)\]
where $(s,t)=[w,z]\cap B(x,cr)$, but this contradicts Lemma \ref{thm:bridgelemma}. Hence, applying a similar argument for $y$, we may assume 
\begin{equation}
d(x',\{x,y\})>\frac{c}{2}|x-y| 
\label{eq:away}
\end{equation}

The idea for the remainder of the proof is to shift the line $[w,z]$ so that it intersects $[x,y]$, in which case both lines will lie in a plane and we can prove the result more easily in this case. The general case will follow because the amount we needed to shift is very small since we are assuming that the lines come very close to each other. See Figure \ref{fig:segments}.

\pic{4in}{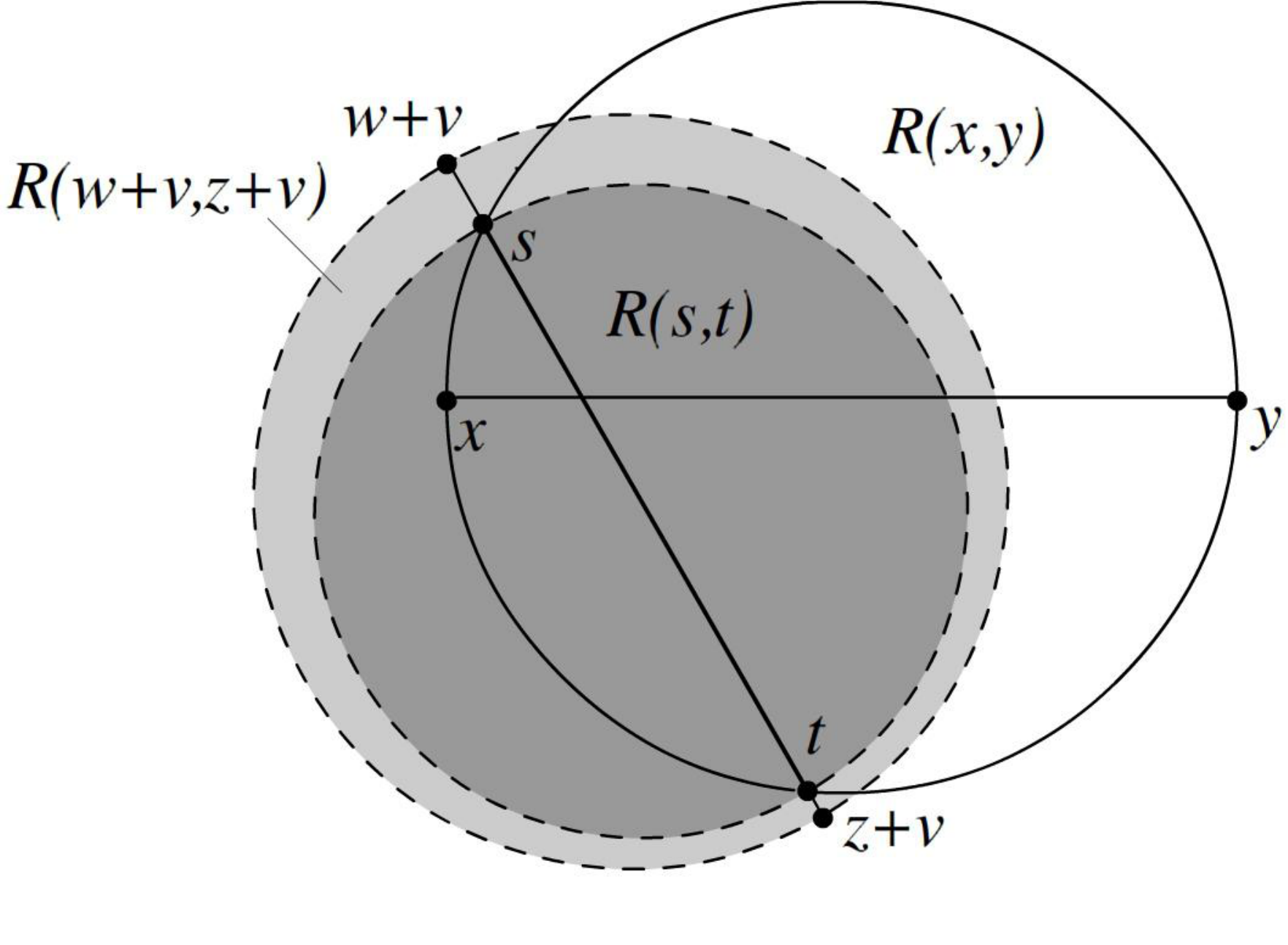}{Accompanying figure to Lemma \ref{l:segment-lemma}.}{segments}

Let $v=x'-z'$, so $|v|<a|z-w|$. By Lemma \ref{thm:bridgelemma}, $w,z\not\in R_{\lambda}(x,y)$, and if $a$ is much smaller than $\lambda$, $w+v,z+v\not\in R_{\frac{\lambda}{2}}(x,y)$ as well. Now let $(s,t)=[w+v,z+v]\cap R(x,y)$, which, for $a$ small enough with respect to $c$, is nonempty by \eqref{eq:away}. If $\frac{x+y}{2}\in (s,t)$, then 
\[|w-z|=|(w+v)-(z+v)|\geq 2\cdot \frac{1+\frac{\lambda}{2}}{2}|x-y|>|x-y|,\]
a contradiction. Hence $(s,t)$ avoids the midpoint of $[x,y]$. Suppose now that $x$ is closer to $(s,t)$ than $y$. Then for $a$ much smaller than $\lambda$,
\[x\in R(s,t)\subseteq R_{\frac{\lambda}{2}}(w+v,z+v)\subseteq R_{\lambda}(w,z),\]
again, a contradiction. 
\end{proof}

\begin{corollary}
To prove the main theorem, it suffices to show that if $x,y\in\Gamma$ then they are joined by a path of length $\lec |x-y|$. 
\label{thm:wlog}
\end{corollary}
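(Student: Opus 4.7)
The strategy is to either project both $x$ and $y$ onto $\Gamma$ via short detours along the bridges they lie on (invoking the $\Gamma$-case hypothesis), or to show that $x$ and $y$ are close enough that they lie on one or two adjacent bridge segments and can be joined directly. Set $\delta_x=\dist(x,\Gamma)$ and $\delta_y=\dist(y,\Gamma)$. Each $w\in\tilde\Gamma\setminus\Gamma$ lies on a unique polygonal bridge produced by Lemma \ref{thm:bridgelemma}, whose successive edges have levels increasing (up to $O(1)$) as one approaches the bridge's $\Gamma$-endpoint; by part (d) of that lemma, an edge at level $\ell$ has length $\sim 2^{-\ell-k_0}$. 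Summing the geometric tail from the level $j_w\sim-\log\delta_w$ of $w$ onward gives a subpath of the bridge from $w$ to some $\pi(w)\in\Gamma$ of length $\lec 2^{-j_w-k_0}\lec\delta_w$. Set $\pi(w)=w$ when $w\in\Gamma$.

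\textbf{Case 1: $|x-y|\gec\max(\delta_x,\delta_y)$.} The projection paths from $x$ to $\pi(x)$ and from $y$ to $\pi(y)$ each have length $\lec|x-y|$, and the triangle inequality gives $|\pi(x)-\pi(y)|\lec|x-y|$. Invoking the hypothesis produces a path in $\tilde\Gamma$ from $\pi(x)$ to $\pi(y)$ of length $\lec|x-y|$; concatenating with the two projections yields the required route.

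\textbf{Case 2: $|x-y|\ll\max(\delta_x,\delta_y)$.} The reverse triangle inequality $|\delta_x-\delta_y|\leq|x-y|$ forces $\delta_x\sim\delta_y>0$, so $x$ and $y$ both lie on bridge segments $s_x$ and $s_y$ of levels differing by $O(1)$, and Lemma \ref{thm:bridgelemma}(d) yields $|s_x|,|s_y|\sim 2^{-k_0}\delta_x$. If the threshold separating Cases 1 and 2 is chosen small enough (smaller than the constant $a$ of the segment separation lemma above divided by $2^{k_0}$), we obtain $|x-y|<a\min(|s_x|,|s_y|)$, so the segment separation lemma forces $s_x$ and $s_y$ to coincide or share a net vertex $v$. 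In the former case, $[x,y]\subseteq s_x\subseteq\tilde\Gamma$ works. In the latter case, write $s_x=[v,w_1]$ and $s_y=[v,w_2]$: by Lemma \ref{thm:bridgelemma}(c), $w_2\notin R_\lambda(v,w_1)$. A short law-of-cosines computation comparing $|w_2-\tfrac{v+w_1}{2}|$ with $\tfrac{1+\lambda}{2}|v-w_1|$ and using $|v-w_1|\sim|v-w_2|$ bounds the angle $\angle w_1 v w_2$ below by a universal $\theta_0>0$. A second application of the law of cosines to the triangle $xvy$ then gives $|x-v|+|v-y|\lec|x-y|$, so the polygonal path $[x,v]\cup[v,y]\subseteq\tilde\Gamma$ completes the argument.

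The main obstacle is the angle lower bound at a shared net vertex; once one has the $R_\lambda$-avoidance condition from Lemma \ref{thm:bridgelemma}(c), this reduces to a short geometric calculation. The bridge projection estimate is a straightforward summation using the level structure prescribed by Lemma \ref{thm:bridgelemma}, and the case split is a direct comparison of length scales.
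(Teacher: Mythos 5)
Your proof is correct and takes essentially the same route as the paper's: project $x$ and $y$ into $\Gamma$ by walking along the bridges they sit on, invoke the $\Gamma$-case hypothesis for the middle portion, and handle the case where $|x-y|$ is small relative to $\dist(x,\Gamma)$ and $\dist(y,\Gamma)$ by appealing to the segment-separation lemma and the non-degenerate angle at a shared vertex of $\cN$. The organization differs slightly — you split cases by comparing $|x-y|$ with $\max(\delta_x,\delta_y)$, which unifies the two sub-cases the paper treats separately ($x,y$ both off $\Gamma$, and $x\in\Gamma$, $y\notin\Gamma$), and you make explicit the angle lower bound at a shared net vertex via the $R_\lambda$-avoidance property, which the paper only implicitly invokes when it says ``if $x,y$ are in the same segment or two adjacent segments, then we're done.'' Both you and the paper gloss over the same small point: the claim that the sub-bridge from $w$ to its $\Gamma$-endpoint has length $\lec\delta_w$ is justified by a geometric-tail summation, which tacitly uses that the levels along the bridge grow essentially monotonically toward $\Gamma$ (equivalently, that the bridge is chord-arc), a fact that follows from the construction and the cone-containment property but is not spelled out in Lemma \ref{thm:bridgelemma}.
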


\begin{proof}

If $x,y$ are in the same segment or two adjacent segments in $\tilde{\Gamma}$, then we're done. If they are in two different segments $s_{x}$ and $s_{y}$, then $|x-y|\geq a\min\{|s_{x}|,|s_{y}|\}$, but by \eqref{eq:close}, the endpoints of these segments are all of comparable distances between each other, so in fact $|x-y|\gec \max\{|s_{x}|,|s_{y}|\}$. 
Let $p_{x}$ and $p_{y}$ be shortest paths connecting $x$ and $y$ to points $x',y'\in \Gamma$ respectively. 
These paths will have lengths $\sim |s_{x}|$ and $\sim |s_{y}|$ respectively by the construction (and  Lemma \ref{thm:bridgelemma}), so $|x'-y'|\lec |x-y|$. 
Moreover, between $x'$ and $y'$, by assumption, there is a path $p$ connecting them of length $\lec |x-y|$, and thus the path $p_{x}\cup p\cup p_{y}$ connects $x$ and $y$ and has length
\[\ell(p_{x}\cup p\cup p_{y})\lec |s_{x}|+|x'-y'|+|s_{y}|\lec |x-y|+|x-y|+|x-y|\lec |x-y|.\]

Suppose $x\in \Gamma$ and $y\in \tilde{\Gamma}\backslash \Gamma$. There is a path of shortest length $p_{y}$ connecting $y$ to a point $y'\in \Gamma$ which has length $\ell(p_{y})\sim d(y,\Gamma))\leq \min\{|y-y'|,|y-x|\}$, and then a path $p_{xy'}$ connecting $y'$ to $x$ of length $\ell(p_{xy'})\lec |x-y'|$. If $|x-y'|<2|x-y|$, then
\[\ell(p_{y}\cup p_{xy'})\lec |x-y|+|x-y'|<3|x-y|\]
and otherwise, if $|x-y'|\geq 2|x-y|$, then
\[|x-y|\geq d(y,\Gamma)\sim |y-y'|\geq |x-y'|-|x-y|\geq |x-y|,\] 
thus 
\[|x-y|\sim d(y,\Gamma)\sim \ell(p_{y})\sim |y-y'|,\] 
hence
\[\ell(p_{y}\cup p_{xy'})\lec |x-y|+|x-y'|\leq 2|x-y|+|y-y'|\lec |x-y|\]
and that finishes the proof.

\end{proof}

\subsection{Main Lemma}\label{s__main-lemma}

\begin{lemma}[Main lemma]
Let $x,y\in \Gamma$. Then we may find a path $\gamma$ connecting $x$ and $y$ that is either a chord-arc path in $\Gamma$ or a union of segments $S$, with endpoints in $\Gamma$ and a set $P\subseteq\tilde{\Gamma}$ that satisfy
\begin{equation} 
\sigma=\sum_{s\in S}\ell(s)\leq |x-y|
\label{eq:segments}
\end{equation}
and
\begin{equation}
\cH^{1}(P)\lec |x-y|-\sigma.
\label{eq:paths}
\end{equation}

\label{thm:thefinallemma}
\end{lemma}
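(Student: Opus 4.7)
I would prove the Main Lemma by a case analysis on the flatness of $\Gamma$ at the scale $r = |x-y|$. Fix a ball $B$ of radius comparable to $r$ centered at the midpoint of the chord $[x,y]$. The dichotomy is on whether $\beta(MB)\geq \delta\ve$ (``large-$\beta$'' case) or $\beta(MB)<\delta\ve$ (``small-$\beta$'' case); the first and most of the second will produce option (b) of the conclusion, while a subcase of the second will yield option (a).

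In the large-$\beta$ case $B$ is \typetwo at scale $r$, so the bridge construction fired there. With $\xi_1,\xi_2\in\Delta_k$ chosen nearest $x$ and $y$ (where $2^{-k}\sim r$), I would verify that the geometric condition used in the \typetwo construction is satisfied for these centers, so that the already-built polygonal bridge connects points $\eta_1,\eta_2\in\Gamma$ with $|\eta_j-x|,|\eta_j-y|\lec\ve r$; then $P$ is taken to be this bridge, and $S$ consists of at most two short corrections $[x,\eta_1]$ and $[\eta_2,y]$. Both inequalities are immediate: $\sigma\lec\ve r\leq r$ and $\cH^{1}(P)\lec r\lec r-\sigma$ when $\ve$ is small.

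In the small-$\beta$ case, Remark \ref{r:tsp_extras} gives that the long component of $\Gamma\cap B$ is $O(\ve r)$-close in Hausdorff distance to a line $L$. I apply Lemma \ref{thm:bj} to obtain the collection $E(x,y)$ of maximal non-\typezero cubes near $\overline{S(x,y)}$, satisfying $\sum_{Q\in E(x,y)}|Q|\lec r$. If $E(x,y)=\emptyset$, then the arc $\Gamma_{x,y}$ of $\Gamma$ from $x$ to $y$ meets only \typezero cubes at every scale below $r$, so the truncated Jones sums along it are uniformly small. Lemma \ref{thm:betasmall}, used in the form indicated in the remark following its proof (with the cubes $\cQ_{kJ}$ in place of individual balls), then yields that $\Gamma_{x,y}$ is chord-arc, and we take option (a). Otherwise, order the cubes $Q_1,\ldots,Q_n\in E(x,y)$ by projection onto $L$, choose $a_i,b_i\in\Gamma\cap 2Q_i$ corresponding to entry and exit of $Q_i$ along $L$, and use the bridge at $Q_i$ (of length $\lec |Q_i|$) to connect $a_i$ to $b_i$ inside $\tilde\Gamma$; let $S=\{[x,a_1],[b_1,a_2],\ldots,[b_n,y]\}$ and let $P$ be the union of these bridges.

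The two quantitative bounds then follow by orthogonal projection onto $L$. Writing primes for projections, the projected segments $[x',a_1'],[b_1',a_2'],\ldots,[b_n',y']$ are disjoint subintervals of $[x',y']$, so after absorbing Pythagorean errors of order $\beta_0^2$ via the smallness of $\ve$ we obtain $\sigma\leq |x'-y'|\leq r$. For the bridge estimate, $r-\sigma\gec \sum_i|a_i'-b_i'|\gec \sum_i|Q_i|$, while $\cH^{1}(P)\lec \sum_i|Q_i|$, giving $\cH^{1}(P)\lec r-\sigma$. The main obstacle I anticipate is the bookkeeping in this last step: ensuring the choice of $a_i,b_i$ (constrained by the specific bridge built at $Q_i$, which lands on some point $\xi'$ near but not exactly at a $\Delta_k$-center) is consistent with a monotone ordering along $L$ when projected cube footprints can overlap, and keeping the constant on $\sigma\leq r$ exactly $1$ rather than $1+O(\ve)$, so that the lemma can be iterated without drift in the recursive application that proves the main theorem. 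A secondary subtlety is the large-$\beta$ case when the natural $\xi_1,\xi_2$ do not coincide with the centers used when the original \typetwo bridge fired, possibly requiring us to appeal to a slightly different \typetwo ball nearby or to the flat-local exception built into that construction.
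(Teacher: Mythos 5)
Your high-level dichotomy (small vs.\ large $\beta$ at scale $|x-y|$) is the right starting point and does match the paper's first split (the paper uses $\beta(M'B)\lessgtr\ve$ for the smallest $\cB$-ball $B\ni x,y$), and the small-$\beta$ branch is broadly on track: the paper also builds a polygonal approximation as in Lemma \ref{thm:betasmall}, replaces edges hitting non-\typezero cubes by the bridges built there, and controls $\sigma=\sum\ell(s)$ by $\lec M\ve|x-y|$ using Lemma \ref{thm:bj} --- which makes $\sigma\ll|x-y|$, so the sharp constant is not actually an issue in this branch. However, the large-$\beta$ branch has a genuine gap that the paper spends the bulk of its effort resolving and that your sketch only flags as a ``secondary subtlety.'' Nothing in the construction guarantees that \emph{this particular} ball $B$ was declared \typetwo with a bridge whose endpoints are usefully placed near $x$ and $y$; condition $\star$ may simply fail for the $\xi_1,\xi_2\in\Delta_k$ closest to $x,y$. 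The paper's solution is structural and has no counterpart in your proposal: it picks $z\in(x,y)$ maximizing $r_z=\dist(z,\Gamma)$, introduces the cones $V_1(z), V_2(z)$ and half-spaces $H_z^\pm$, and proves the key geometric Lemma \ref{thm:geometriclemma} showing that \emph{any} chord-arc connection between points $x'\in H_z^+\cap V_1(z)$ and $y'\in H_z^-\cap V_1(z)$ yields $\sigma\leq|x-y|$ \emph{with constant exactly $1$} plus the needed slack for $\cH^1(P)$ --- this is precisely the ``drift'' problem you flag, and it is solved by the law-of-sines computation, not by absorbing Pythagorean errors. It then splits further into $\beta(B(z,M'r_z))<\ve$ and $\geq\ve$, and in the first subcase uses the slope Proposition \ref{thm:prop} (repeatedly, via its corollaries and a delicate induction on the chain $\xi_j$) to \emph{locate} a \typetwo ball whose $\star$-condition provably holds, with bridge endpoints landing inside $V_2(z)$; in the second subcase it uses the cones $C_\alpha(\xi_\pm)$ and the set $\cN$ directly to produce a valid $\star$-pair. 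These steps --- especially the maximal-$r_z$ choice, Proposition \ref{thm:prop}, and Lemma \ref{thm:geometriclemma} --- are the technical heart of the argument and are missing from your proposal, so as written the large-$\beta$ case does not close.

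A secondary issue in your small-$\beta$ branch: when you write ``after absorbing Pythagorean errors\,\ldots\,we obtain $\sigma\leq|x'-y'|\leq r$,'' the inequality goes the wrong way --- the segment lengths \emph{exceed} their projections, so disjointness of projected footprints alone gives $\sigma\geq\sum|a_i'-b_i'|$, not $\leq$. The paper avoids this by instead bounding $\sigma$ directly via $\sum_{Q\in E(x,y)}M\ve|Q|\lec M\ve|x-y|$ (Lemma \ref{thm:bj}), which makes $\sigma$ a small fraction of $|x-y|$ outright rather than relying on a near-equality with cancellation.
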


In the proof that follows, we set several constants.
Let us mention our order of choosing of the constants we use below so there is no ambiguity.
Let $\phi$ be a small angle that we will fix later. Then we will set $\alpha$ in terms of $\phi$, and $C$ in terms of $\phi$ and $\alpha$. We pick $M$ to be large in terms of $\phi$, $M'$ in terms of $M$, $\delta$ in terms of $M$ and $M'$, and $\ve$ small depending on $M$ and $M'$. 

\subsection{Proof of main lemma}
\begin{proof}
We describe a process of constructing $\gamma$, i.e. obtaining $P$ and $S$ as in the statement of the lemma. We do so inductively.  In particular, we will have a sequence of paths $\gamma_{j}$ such that each path is a union of paths in $\tilde{\Gamma}$ and line segments, and the consecutive $\gamma_{j+1}$ is constructed by replacing each of the segments with another path according to some schema.


Let $L$ be the infinite line through $x$ and $y$, $\Pi$ the projection onto this line, and $\phi>0$ some small angle to be chosen later, and $M'<M$ a large number to be chosen later (in fact, $M'$ will be picked proportionally to $M$). The schema for replacing a segment $[x,y]$ with a new path is organized into four cases:

{
\begin{enumerate}
\item $(x,y)\cap \Gamma\neq\emptyset$
\item $(x,y)\cap \Gamma=\emptyset$
	\begin{enumerate}[(a)]
	\item $\beta(M'B)<\ve$
	\item $\beta(M'B)\geq \ve$. For $z\in[x,y]$, let 
\begin{equation}
z\in (x,y) \;\; \sothat \;\; r_{z}=\sup\{r:B(z,r)\cap \Gamma=\emptyset\}\mbox{ is maximum.}
\end{equation}
		\begin{enumerate}[(i)]
			\item $\beta(B(z,M'r_{z})<\ve$
			\item $\beta(B(z,M'r_{z}))\geq \ve$.
		\end{enumerate}
	\end{enumerate}
\end{enumerate}

We treat  each  case  below.
}

\subsubsection{ Case 1: $(x,y)\cap \Gamma\neq\emptyset$}


Decompose $[x,y]\backslash \Gamma$ into subintervals $\{(a_{i},b_{i})\}$ and let $\gamma$ be the path with segments $[a_{i},b_{i}]$.\\

From now on, we assume $(x,y)\cap \Gamma=\emptyset$. Let $B\in\cB $ be the smallest ball such that $2B\ni x,y$.

\subsubsection{ Case 2.a: $\beta(M'B)<\ve$}

Suppose first that $x,y\in \Delta_{kJ}$ are adjacent. We will construct a sequence of paths $\gamma_{N}$ by adjusting or adding edges. When adjusting a $\gamma_{N}$ to get $\gamma_{N+1}$, we may add some edges that will be permanent in the sense that they will be contained in $\gamma_{i}$ for all $i>n$. We will keep track of these edges by placing them in a collection $S$.

Let $x=x_{0},x_{1},...,x_{n}=y$ be the points in $\Delta_{(k+1)J}$ between $x$ and $y$ 
{
(note that such a path must exist by Remark \ref{r:tsp_extras})}. 
Let $\gamma_{1}'$ be the path obtained by connecting these points in order. If $[x_{j},x_{j+1}]$ is an edge such that $Q=Q(x_{j})\in \cQ_{k+1}$ is   \typeone, then since $\ve$ is small enough, there is a path $p_{Q}\subseteq \tilde{\Gamma}$ and segments $e_{Q}^{1}$ and $e_{Q}^{2}$ with $\ell(e_{Q}^{i})\lec M\ve |Q|$ such that $p_{Q}\cup e_{Q}^{1}\cup e_{Q}^{2}$ is a path connecting $x_{j}$ to $x_{j+1}$. Do similarly if $Q(x_{j+1})$ is  \typeone. Add these edges to our set $S$. Doing this on each edge in $\gamma_{1}'$ makes a new path $\gamma_{1}$. 

{
If $Q$ is \typetwo, then a path $p_{Q}$ and edges $e_{Q}^{1}$ and $e_{Q}^{2}$ exist just as above if $\ve>0$ is small by the discussion after the definition of Condition \star. }

Repeat the above process on each edge $[x_{j},x_{j+1}]$ in $\gamma_{1}'$ that remained in $\gamma_{1}$ (i.e. both $Q(x_{j})$ and $Q(x_{j+1})$ are \typezero) to get a path $\gamma_{2}$ and so on to get a sequence of paths $\gamma_{N}$ that converge to a path $\gamma$ with length $\lec |x_{0}-x_{1}|=|x-y|$ by Lemma \ref{thm:betasmall} and Remark \ref{thm:case2remark}. Moreover, $\gamma\backslash\tilde{\Gamma}=\bigcup_{s\in S}s^{\circ}$, where $s^{\circ}$ denotes the relative interior of $s$. 

By construction, each $s\in S$ is associated to some maximal \typeone or \typetwo cube $Q$ with $\ell(s)\lec M\ve |Q|$ (that is, $Q$ is contained in no other \typeone or \typetwo cube with an edge associated to it),  and each such cube has no more than three edges associated to it. By Lemma \ref{thm:bj}, for $\ve$ small enough,
\[\sigma = \sum_{s\in S}\ell(s)\lec M\ve \sum_{Q\in E(x,y)}|Q|\lec M\ve |x-y|.\]

Let $P=\gamma\cap \Gamma$. Then we can pick $\ve$ small enough so that $\sigma<\ve_{1}|x-y|$, where we pick $\ve_{1}<\frac{1}{4}$ below, and hence
\[\cH^{1}(P)\leq \ell(\gamma)\lec |x-y|\lec |x-y|-\sigma.\]

Now suppose that $x,y$ are arbitrary. Again, we will construct a collection $S$ of edges. Choose $k$ 

{
so that $2^{-(k-1)J}<|x-y|\leq 2^{-(k-2)J}$, so if $x_{0},...,x_{n}\in \Delta_{kJ}$ are those points between $x$ and $y$ ordered by their distance from $x$, then because $J\geq 100$,
\begin{equation} |x_{0}-x|+|x_{n}-y|<\frac{1}{4}|x-y|
\label{e:1/4|x-y|}\end{equation}}
Add $[x_{0},x]$ and $[x_{n},y]$ to $S$. Let $\gamma' =[x,x_{0}]\cup\bigcup_{j=0}^{n-1} [x_{j},x_{j+1}]\cup [x_{n},y]$. Then $\ell(\gamma')\lec |x-y|$. If any edge $[x_{j},x_{j+1}]$ has $Q(x_{j})$ or $Q(x_{j+1})$ not \typezero, as before there is a path $\gamma_{j}=p_{Q}\cup e_{Q}^{1}\cup e_{Q}^{2}$ connecting $x_{j}$ to $y_{j}$, add $e_{Q}^{1}$ and $e_{Q}^{2}$ to $S$. Let $S_{j}=\{e_{Q}^{1},e_{Q}^{2}\}$ and $P_{j}=p_{Q}$. For $\ve$ small, these satisfy

\begin{equation}
\cH^{1}(P_{j})\lec |x_{j}-x_{j+1}|-\sigma_{j}, \;\; \sigma_{j}=\sum_{s\in S_{j}}\ell(s)<\ve_{1}|x_{j}-x_{j+1}|.
\label{eq:pj}
\end{equation}
Otherwise, since $x_{j},x_{j+1}\in \Delta_{kJ}$ are adjacent, we may apply the previous construction to these points  (if $\ve$ is sufficiently small) to get a path $\gamma_{j}$ that is the union of a set $P_{j}$ and collection of segments $S_{j}$ satisfying \eqref{eq:pj} as well. Add the segments from each such $S_{j}$ segments to $S$. 

Note there is a universal constant $C_{0}$ such that
\[|x-x_{0}|+|x_{n}-y|+\sum_{j}|x_{j}-x_{j+1}|\leq C_{0}|x-y|.\]
{
This follows from equation \eqref{e:1/4|x-y|} and because, for $\ve>0$ small enough, the angle between $[x_{j},x_{j+1}]$ and $(x,y)$ is no more than $\frac{\pi}{4}$ for each $j$, and so the polygonal path $x_{1},...,x_{n}$ is a Lipschitz graph along $(x,y)$.}

Let $\ve_{1}<\frac{1}{4C_{0}}$, then by \eqref{eq:pj},
\begin{align*}
\sigma = \sum_{s\in S}\ell(s) & =|x-x_{0}|+|x_{n}-y|+\sum_{j}\sum_{s\in S_{j}}\ell(s) \\
& < \frac{1}{4}|x-y|+\ve_{1}\sum|x_{j}-x_{j+1}|\\
& 
<(\frac{1}{4}+\frac{1}{4})|x-y|  =\frac{1}{2}|x-y|.
\end{align*}
Let
\[\gamma=[x,x_{0}]\cup [x_{n},y]\cup \bigcup_{j} \gamma_{j}\]
and $P=\bigcup P_{j}$. Then 
\[\cH^{1}(P)\lec \sum_{j}|x_{j}-x_{j+1}|
\lec |x-y|\lec |x-y|-\sigma\]
and $\gamma$ thus satisfies the conditions of our lemma and settles the case of when $\beta(M'B)<\ve$.\\

\subsubsection{Case 2.b: $\beta(M'B)>\ve$. Preliminaries}
From now on, we assume $\beta(M'B)>\ve$. \\

%
%
Pick 

\begin{equation}
z\in (x,y) \;\; \sothat \;\; r_{z}=\sup\{r:B(z,r)\cap \Gamma=\emptyset\}\mbox{ is maximum.}
\end{equation}

\noindent Identify $L\ni x,y$ with $\bR$ so that $x>y$ and let $\Pi:\bR^d\to\bR$ be the orthogonal projection onto $L$.
Define
\[H_{z}^{-}:=\{v:y<\Pi(v)<z\}, \;\; H_{z}^{+}:=\{v:z<\Pi(v)<x\}.\]

\noindent That is, $\cH_{z}^{+}$ is the area trapped in between two parallel half planes, once centered at $x$ and another at $z$, that are perpendicular to $L$. 
Similarly for  $\cH_{z}^{-}$. For $j=1,2$, define $V_{j}(z)=C_{\frac{1}{\sin j\phi},L}(z)^{c}$, that is, a cone centered at $z$ with axis $L$. 

\begin{lemma}
For sufficiently small $\phi>0$, if 
\[x'\in H_{z}^{+}\cap V_{1}(z)\]
and

\[y'\in H_{z}^{-}\cap V_{1}(z)\]
are connected by a chord-arc path $p$, then $\gamma=[x,x']\cup p\cup [y,y']$ satisfies \eqref{eq:segments} and \eqref{eq:paths}.
\label{thm:geometriclemma}
\end{lemma}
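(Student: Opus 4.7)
\medskip

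\noindent\textbf{Proof plan for Lemma \ref{thm:geometriclemma}.} The natural decomposition of $\gamma$ is $S=\{[x,x'],[y,y']\}$ and $P=p$, so that $\sigma=|x-x'|+|y-y'|$ and $\cH^{1}(P)=\ell(p)$. Since $p$ is a chord-arc path, $\ell(p)\lec |x'-y'|$, so both \eqref{eq:segments} and \eqref{eq:paths} reduce to elementary Euclidean estimates using the location of $x'$ and $y'$ relative to $L$ and $z$. To set these up, let
\[d_{x}=|x-\Pi(x')|,\qquad d_{z}=|\Pi(x')-z|,\qquad d_{z}'=|z-\Pi(y')|,\qquad d_{y}=|\Pi(y')-y|.\]
The conditions $x'\in H_{z}^{+}$ and $y'\in H_{z}^{-}$ force $\Pi(x')$ and $\Pi(y')$ to lie on opposite sides of $z$ between $y$ and $x$, so $|x-y|=d_{x}+d_{z}+d_{z}'+d_{y}$ and $|\Pi(x')-\Pi(y')|=d_{z}+d_{z}'$.

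The key geometric input is the cone condition. Unwinding the definition of $V_{1}(z)=C_{1/\sin\vp,L}(z)^{c}$, a point $w\in V_{1}(z)$ has $\angle(w-z,L)\leq\vp$; hence if $\theta$ is the angle for $x'$, then $|\Pi(x')-z|=|x'-z|\cos\theta$ and $\dist(x',L)=|x'-z|\sin\theta$, giving $\dist(x',L)\leq d_{z}\tan\vp$. Symmetrically $\dist(y',L)\leq d_{z}'\tan\vp$. Plugging into Pythagoras I get
\[|x-x'|\leq\sqrt{d_{x}^{2}+d_{z}^{2}\tan^{2}\vp}\leq d_{x}+d_{z}\tan\vp,\qquad |y-y'|\leq d_{y}+d_{z}'\tan\vp,\]
so $\sigma\leq d_{x}+d_{y}+(d_{z}+d_{z}')\tan\vp$. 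For $\vp<\pi/4$ this is bounded by $|x-y|$, giving \eqref{eq:segments}, and it also yields the crucial lower bound
\[|x-y|-\sigma\geq (d_{z}+d_{z}')(1-\tan\vp).\]

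For \eqref{eq:paths} I split $x'-y'$ into its component along $L$, which has length exactly $d_{z}+d_{z}'$, and its component orthogonal to $L$, which has length at most $\dist(x',L)+\dist(y',L)\leq(d_{z}+d_{z}')\tan\vp$. Hence $|x'-y'|\leq(d_{z}+d_{z}')\sec\vp$. Combining with the chord-arc bound on $p$ and the lower bound on $|x-y|-\sigma$,
\[\cH^{1}(P)=\ell(p)\lec |x'-y'|\leq\frac{\sec\vp}{1-\tan\vp}\,(|x-y|-\sigma),\]
which is \eqref{eq:paths} once $\vp$ is small enough that the prefactor is a universal constant.

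The only obstacle is calibration: $\vp$ must be chosen small enough so that both $\tan\vp<1$ (to keep $\sigma<|x-y|$ and keep $|x-y|-\sigma$ positive) and so that $\sec\vp/(1-\tan\vp)$ is absorbed into the implicit chord-arc constant for $p$. Since $\vp$ is reserved to be chosen first in the hierarchy of constants outlined in subsection \ref{s__main-lemma}, this is a clean fix and is the only place where the angular opening of $V_{1}(z)$ really gets used. No input about $\Gamma$, the stopping-time construction, or the \typezero/\typeone/\typetwo classification enters at this step; the lemma is purely a geometric bookkeeping statement that the cones $V_{1}(z)$ are narrow enough for the triangle-inequality-with-Pythagorean-correction to work.
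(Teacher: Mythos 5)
Your proposal is correct, and it uses the same high-level decomposition as the paper ($S=\{[x,x'],[y,y']\}$, $P=p$, the heart being that the cone $V_{1}(z)$ keeps $x'$ and $y'$ so close to $L$ that the two segments waste little length), but it gets the quantitative bound by a more elementary route. The paper works triangle by triangle with the law of sines: writing $\theta$ for the angle $[x,x']$ makes with $L$ and $\vp'<\vp$ for the angle $[x',z]$ makes with $L$, it derives $|x-x'|(1-\cos\theta)\leq\sin\vp'\,|x'-z|$ and combines this with the projection identity $|x-x'|\cos\theta+|x'-z|\cos\vp'=|x-z|$ to obtain $|x-z|-|x-x'|\geq\tfrac12|\Pi(x')-z|$, and hence $|x-y|-\sigma\geq\tfrac12|\Pi(x')-\Pi(y')|$; the smallness condition on $\vp$ used there is $\sin\vp<\tfrac12\cos\vp$. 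You bypass the law of sines entirely: since $x,\Pi(x')\in L$ and $x'-\Pi(x')\perp L$, Pythagoras gives $|x-x'|=\sqrt{d_{x}^{2}+\dist(x',L)^{2}}\leq d_{x}+d_{z}\tan\vp$ directly, and the rest follows in one line, giving the slightly sharper $|x-y|-\sigma\geq(1-\tan\vp)(d_{z}+d_{z}')$. Both arguments then bound $|x'-y'|\leq(d_{z}+d_{z}')\sec\vp$ and absorb $\sec\vp/(1-\tan\vp)$ (resp.\ $2/\cos\vp$) into the chord-arc constant. You also correctly flag that \eqref{eq:segments} must hold as an exact inequality, not up to a constant, and verify it holds once $\tan\vp\leq 1$; this point is glossed over with a single word in the paper. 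Your version is cleaner and there is no gap.
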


\begin{proof}

Clearly \eqref{eq:segments} is satisfied. Choose $\phi>0$ small enough so that $\sin \phi<\frac{\cos\phi}{2}$. Let $\phi'<\phi$ be the angle that $[x',z]$ makes with $L$ and $\theta$ the angle $[x,x']$ makes with $L$. Then by the law of sines,
\begin{align*}
|x-x'|(1-\cos\theta) & 
=\frac{\sin^{2}\theta}{1+\cos\theta}|x-x'|
=\frac{\sin\theta\sin\phi'}{1+\cos\theta}|x'-z|
 \leq \sin\phi'|x'-z|\\
& <\frac{\cos\phi'}{2}|x'-z|
\end{align*}

and since $|x-x'|\cos\theta+|x'-z|\cos\phi'=|x-z|$,
\begin{align*}
|x-z|-|x-x'| & 
=(\cos\theta-1)|x-x'|+\cos\phi'|x'-z|
 \geq \frac{\cos\phi'}{2}|x'-z|\\
& =\frac{1}{2}|x''-z|
\end{align*}
where $x''=\Pi(x')$. Similarly,
\[|y-z|-|y-y'|\geq \frac{1}{2} |y-y''|\]
where $y''=\Pi(y')$. Hence,
\begin{align*}
|x-y|-|x-x'|-|y-y'| & 
=|x-z|+|z-y|-|x-x'|-|y-y'| \\
& \geq \frac{1}{2}(|x-x''|+|y-y''|)
=\frac{1}{2}|x''-y''|.
\end{align*}

\noindent Note that the angle that $[x',y]$ makes with $L$ can be no more than $\phi$, thus
\[\ell(p)\lec |x'-y'| \leq \frac{1}{\cos\phi}|x''-y''|\lec |x-y|-|x-x'|-|y-y'|\]
which proves \eqref{eq:paths}.

\end{proof}

\begin{remark} By this Lemma, if we pick $\phi$ small so that $\sin2\phi<\frac{\cos2\phi}{2}$, it now suffices for us to find $x',y'\in \Gamma$ in each component of $V_{2}(z)$ that are connected by a chord arc path in $\tilde{\Gamma}$, which is what we'll do in the next two cases (see Figure \ref{fig:jistofconeargument}).
\end{remark}

\pic{4in}{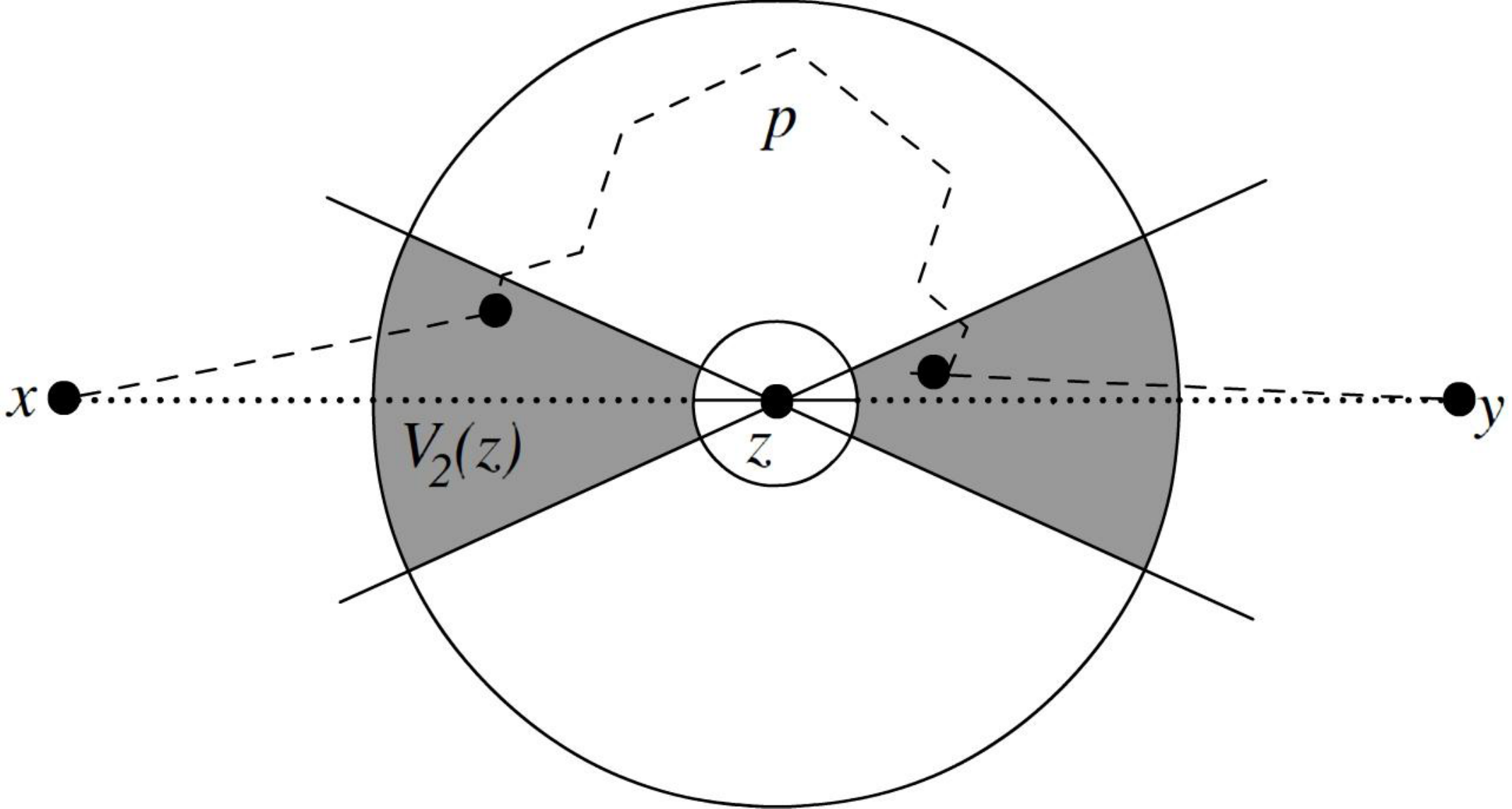}{$\gamma$ as constructed in cases 2.b. (i) and (ii).}{jistofconeargument}

\subsubsection{Case 2.b.i: $\beta(B(z,M'r_{z}))<\ve$} 

We need a proposition that will be used rather frequently in the arguments below:

\begin{proposition}

Identify $L$ with $\bR$ and suppose $a$ and $b$ are points so that $\Pi(a)>\Pi(b)$ and the following hold:

\begin{itemize}
\item[(a)] $\Pi(a)\in (x,y)$
\item[(b)] $a,b\in B(z_{0},\frac{M'r_{0}}{4})$ for some $z_{0}\in L$,
\item[(c)] $|a-b|>cr_{0}$, $c$ some small constant, 
\item[(d)] $\frac{\dist(a,L)-\dist(b,L)|}{|\Pi(a)-\Pi(b)|}\geq \frac{|\dist(a,L)-\dist(b,L)|}{|a-b|}>\frac{10}{M'}$.
\item[(e)] $\beta(B(z_{0},M'r_{0}))<\ve$.
\end{itemize}

Let $z'=\Pi(a)+w+2r_{0}$, where
\[w=\isif{ \frac{2r_{0}(\Pi(a)-\Pi(b))}{\dist(a,L)-\dist(b,L)} & \Pi(a)>\Pi(b) \\
0 & \mbox{ otherwise }}.\]
Then $z'\in B(z_{0},\frac{M'r_{0}}{2})$ and $\dist(z',\Gamma)>\frac{3}{2}r_{0}$ for small $\ve$ (depending on c). Furthermore, $x,y\not\in B(z_{0},M'r_{0})\cap [\Pi(a),\infty)$, so in particular, $z'\in [x,y]$. (See Figure \ref{fig:slope}).
\label{thm:prop}
\end{proposition}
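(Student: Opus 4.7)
The plan is to verify the three conclusions of Proposition \ref{thm:prop} in order, using direct geometric calculations.

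First, to show $z' \in B(z_{0}, M'r_{0}/2)$: by (b) we have $|\Pi(a) - z_{0}| \leq M'r_{0}/4$, and condition (d) gives $H/U \geq 10/M'$ where $H = \dist(a,L) - \dist(b,L) > 0$ and $U = \Pi(a) - \Pi(b) > 0$. Hence $w = 2r_{0} U/H \leq M'r_{0}/5$, yielding
\[
|z' - z_{0}| \leq M'r_{0}/4 + M'r_{0}/5 + 2r_{0} < M'r_{0}/2
\]
provided $M'$ is sufficiently large.

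Second, for the bound $\dist(z', \Gamma) > \tfrac{3}{2}r_{0}$: by (e) together with Remark \ref{r:tsp_extras}, $\Gamma \cap B(z_{0}, M'r_{0})$ lies within an $O(M'\ve r_{0})$-tube about some line $L'$. In the intended application $a, b \in \Gamma \cap B(z_{0}, M'r_{0})$, and since $|a - b| \gec r_{0}$ by (c), the line $L''$ through $a$ and $b$ differs from $L'$ by a small angle $O(\ve M'/c)$. So it suffices to show $\dist(z', L'') \geq 2r_{0}$. To do this, project orthogonally onto the $2$-plane $P$ spanned by $L$ and $a$. Since $z' \in P$ and projections are $1$-Lipschitz, $\dist(z', L'') \geq \dist(z', \tilde{L}'')$, where $\tilde{L}''$ is the projected line. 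In $P$, the projected image $\tilde{b}$ of $b$ satisfies $|h(\tilde{b})| \leq h(b)$, so that the slope $H'/U$ of $\tilde{L}''$ satisfies $H' := h(a) - h(\tilde{b}) \geq H$. The standard planar point-to-line distance formula gives
\[
\dist(z', \tilde{L}'') = \frac{H'(w + 2r_{0}) + U h(a)}{\sqrt{(H')^{2} + U^{2}}},
\]
and substituting $H'w = 2r_{0}(H'/H)U \geq 2r_{0} U$ reduces the problem to the inequality $(H')^{2}(U + 2H) \geq H^{2}U$, which holds since $H' \geq H$. Thus $\dist(z', \tilde{L}'') \geq 2r_{0}$, and combined with the $O(M'\ve r_{0})$ tube width this gives $\dist(z', \Gamma \cap B(z_{0}, M'r_{0})) > \tfrac{3}{2}r_{0}$ for $\ve$ small enough in terms of $c$ and $M'$. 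Points $\xi \in \Gamma \setminus B(z_{0}, M'r_{0})$ lie at distance at least $M'r_{0}/2 > \tfrac{3}{2}r_{0}$ from $z' \in B(z_{0}, M'r_{0}/2)$, provided $M' \geq 4$.

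Finally, the claim that $x, y \notin B(z_{0}, M'r_{0}) \cap [\Pi(a), \infty)$ follows because $y < \Pi(a)$ by (a), so $y \notin [\Pi(a), \infty)$ automatically, while in the invocation of the proposition $z_{0}$ is placed well inside $(x, y)$ with $r_{0}$ much smaller than $|x-y|$, forcing $x \notin B(z_{0}, M'r_{0})$. The conclusion $z' \in [x, y]$ then falls out: $z' > \Pi(a) > y$, and $z' \in B(z_{0}, M'r_{0}/2)$ together with $x \notin B(z_{0}, M'r_{0})$ gives $z' < x$. The main obstacle I expect is the $2$-dimensional geometric estimate for $\dist(z', \tilde{L}'')$; the remaining steps are bookkeeping, with the definition of $w$ chosen precisely so that translating from $\Pi(a)$ along $L$ by $w$ shifts the approximating line by $2r_{0}$ away from $L$, and the extra $2r_{0}$ in the definition of $z'$ supplies the required clearance.
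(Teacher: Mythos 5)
Your proof of the first two conclusions is correct, and the argument for $\dist(z',\Gamma)>\tfrac{3}{2}r_{0}$ is in fact cleaner than the paper's. The paper splits into a coplanar case and a skew case (treated by projecting $a,b,L_{a,b}$ onto a hyperplane containing $L$). You instead project once onto the $2$-plane $P$ spanned by $L$ and $a$, note that $z'\in P$, that the projection commutes with $\Pi$, is $1$-Lipschitz (so distances only decrease), and that the ``height gap'' $h(a)-h(\tilde b)$ can only grow, then finish with a single planar computation. The numerator bound $H'(w+2r_{0})+Uh(a)\geq 2r_{0}(U+H')\geq 2r_{0}\sqrt{(H')^{2}+U^{2}}$ (using $H'w\geq 2r_{0}U$) gives $\dist(z',\tilde L'')\geq 2r_{0}$ directly, which is a tidy unification of the paper's two cases.

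There is, however, a genuine gap in your proof of the third conclusion. You dispose of $y$ correctly (it lies on the wrong side of $\Pi(a)$), but for $x$ you write that $x\notin B(z_{0},M'r_{0})$ because ``in the invocation of the proposition $z_{0}$ is placed well inside $(x,y)$ with $r_{0}$ much smaller than $|x-y|$.'' That is not a consequence of hypotheses (a)--(e), and appealing to the application is circular: the very purpose of this conclusion, via Corollary \ref{thm:noxy}, is to \emph{establish} that $x$ and $y$ are far from the centers of the balls in question. The paper derives it from (a)--(e) directly: with $f(t)=\dist(t,L_{a,b})$ for $t\in L$, condition (d) forces $f$ to be increasing on $[\Pi(a),\infty)$ with $f(\Pi(a))\gec cr_{0}/M'$, so if $x\in B(z_{0},M'r_{0})\cap[\Pi(a),\infty)$ then $\dist(x,L_{a,b})\gec cr_{0}/M'$; but $x\in\Gamma\cap B(z_{0},M'r_{0})$ must lie in the $O(M'\ve r_{0})$-tube around $L_{a,b}$, which is a contradiction for $\ve$ small. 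You need to supply an argument of this kind rather than import the conclusion from outside the proposition.
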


\pic{4.5in}{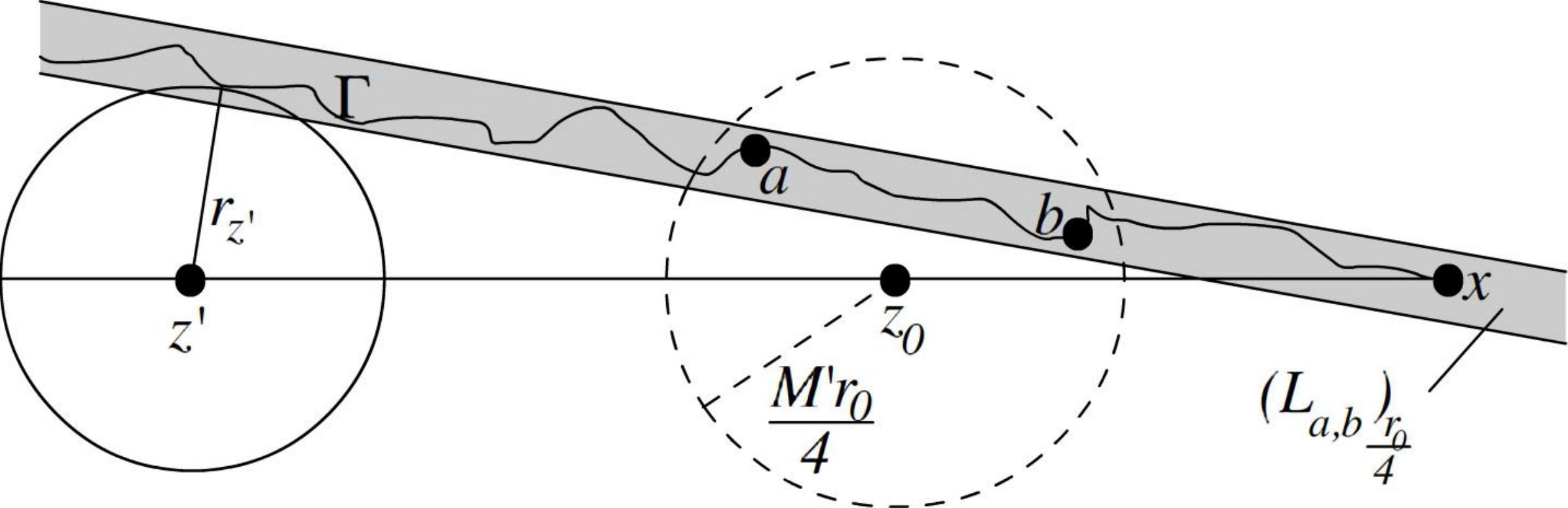}{The heuristic for Proposition \ref{thm:prop} is that if two points $a$ and $b$ satisfy these conditions, then $\Gamma$ is contained in a tube around $L_{a,b}$ that is moving away from $L$, so somewhere along $L$ there must be a point far away from $L_{a,b}$ and hence from $\Gamma$.}{slope}

\begin{proof}

First consider the case when $L$ and $L_{a,b}$ lie in the same two dimensional plane 
{
and $a=\Pi(a)$.} 
Identify this plane with $\bR^{2}$ so that $L=\bR$, $\Pi(a)\geq \Pi(b)$, and $a$ lies in the upper half plane. Suppose $\Pi(a)>\Pi(b)$. Let $f(t)=\dist(t,L_{a,b})$, which, in this case, is a linear function with slope at least $\frac{10}{M'}$. Hence 
\[
f(\Pi(a)+w)  \geq 2r_{0}
\]
and so $\Pi(a)+w+2r_{0}$ is at least $2r_{0}$ from $L_{a,b}$. Thus
\[\dist(L_{a,b},B(\Pi(a)+w+2r_{0},\frac{3}{2}r_{0}))>\frac{1}{4}r_{0}.\]
Moreover, since $\Pi(a)\in B(z_{0},\frac{M'r_{0}}{4})$,
\[|z'-z_{0}|\leq \frac{M'r_{0}}{4}+|w|+2r_{0}\leq r_{0}(\frac{M'}{4}+\frac{2M'}{10}+2)<\frac{M'}{2}r_{0}\]
for large enough $M'$, hence
\begin{equation}
\dist(B(z',\frac{3}{2}r_{0}),L_{a,b}\cup \d B(z_{0},M'r_{0}))\geq (\frac{M'}{2}-\frac{3}{2})r_{0}>\frac{1}{4}r_{0}. \label{e: 1/4r_0}\end{equation}
Therefore, for small enough $\ve>0$, by property (e), $\Gamma\cap B(z_{0},M'r_{0})$ is contained in a tube of radius no more than $2M'r_{0}\ve$, and by property (c), for small enough $\ve>0$ this tube is contained in $(L_{a,b})_{\frac{r_{0}}{4}}$, and thus we have $\dist(z',\Gamma)>\frac{3}{2}r_{0}$. 
The case $\Pi(a)=\Pi(b)$ can be treated similarly. 
{
For the case $\Pi(a)\neq a$,  we just note that $|\Pi(a)-a|<M' r_{0}\ve$, and so by translating the line $L_{a,b}$, the constants above will change by no more than a multiplicative constant if we pick $\ve>0$ small enough.}

Now consider the case when $L$ and $L_{a,b}$ lie in two parallel hyperplanes $H$ and $H_{a,b}$ in $\bR^{d}$. Let $a',b',L_{a,b}'=L_{a',b'}$ be the projections of $a,b,$ and $L_{a,b}$ onto the hyperplane containing $L$ . Since this projection is isometric, $\Pi(a)=\Pi(a'),\Pi(b)=\Pi(b')$, and $\dist(a,L)-\dist(b,L)$ decreases as $H_{a,b}$ moves away from $H$, properties (a) through (d) of the proposition still hold with $a',b'$ instead of $a,b$. Thus the estimate \eqref{e: 1/4r_0} holds with $z'$ as before since it holds with $a'$ and $b'$ in place of $a$ and $b$. The proof that $\dist(z',\Gamma)\geq \frac{3}{2}r_{0}$ is similar to the previous case.

To prove the final statement, notice that if $x\in B(z_{0},M'r_{0})\cap [\Pi(a),\infty)$, then 
\[f(x)\geq f(\Pi(a))=\dist(a,L)\geq \dist(a,L)-\dist(b,L)> |a-b|\frac{10}{M'}>\frac{10cr_{0}}{M'},\]
which means that $x$ can't be contained in $(L_{a,b})_{\frac{r_{0}}{4}}$ if we pick $\ve$ small enough.

\end{proof}

In particular, we have the following useful corollary:

\begin{corollary}
If $r_{0}\geq r_{z}$, there are no points $a,b\in \Gamma$ satisfying (a)-(e).
\end{corollary}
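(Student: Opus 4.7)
The plan is to argue directly by contradiction, leveraging the proposition just proved. Suppose, toward contradiction, that there exist $a,b\in\Gamma$ satisfying (a)--(e) with $r_{0}\geq r_{z}$. Applying Proposition \ref{thm:prop} to this pair produces a point $z'\in[x,y]$ (the proposition explicitly places $z'$ on $[x,y]$) with
\[
\dist(z',\Gamma)>\tfrac{3}{2}r_{0}.
\]

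Next I would observe that since $r_{0}>0$, the point $z'$ lies in the open segment $(x,y)$: indeed, $\dist(z',\Gamma)>0$ forces $z'\notin\Gamma$, and in particular $z'\neq x,y$ (recall $x,y\in\Gamma$). Therefore $z'$ is a legitimate candidate in the definition of $r_{w}$ for $w\in(x,y)$, and by definition of $r_{z'}$,
\[
r_{z'}\;\geq\;\dist(z',\Gamma)\;>\;\tfrac{3}{2}r_{0}\;\geq\;\tfrac{3}{2}r_{z}\;>\;r_{z}.
\]

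This contradicts the choice of $z$ as the point in $(x,y)$ that maximizes $r_{w}$. Hence no such pair $a,b$ can exist, proving the corollary. The argument is essentially immediate once Proposition \ref{thm:prop} is in hand; the only subtlety is checking that $z'$ is not accidentally equal to $x$ or $y$, which is handled by the strict positivity of the distance estimate. No real obstacle arises beyond bookkeeping.
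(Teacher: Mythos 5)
Your argument is correct and is essentially the paper's proof, which the paper compresses to one sentence: the existence of such $a,b$ would by Proposition \ref{thm:prop} yield $z'$ with $r_{z'}>r_{z}$, contradicting the maximality of $r_{z}$. Your extra check that $z'\in(x,y)$ rather than merely $[x,y]$ (since $\dist(z',\Gamma)>0$ while $x,y\in\Gamma$) is a legitimate detail the paper leaves implicit.
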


\begin{proof}
This is simply because $r_{z'}>r_{z}$ contradicts the maximality of $r_{z}$. 
\end{proof}

\begin{corollary}
If $z_{0}\in (x,y)$ and $\beta(B(z_{0},M'r_{0}))<\ve$, $r_{0}\geq r_{z}$, then $x,y\not\in B(z_{0},M'r_{0}2^{-4})$.
\label{thm:noxy}
\end{corollary}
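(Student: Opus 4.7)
The plan is to argue by contradiction: suppose $x \in B(z_0, M'r_0 2^{-4})$, with the case of $y$ being symmetric. I aim to produce points $a, b \in \Gamma$ satisfying conditions (a)--(e) of Proposition~\ref{thm:prop}, which will contradict the preceding corollary.

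First, applying Remark~\ref{r:tsp_extras} to the flatness hypothesis $\beta(B(z_0, M'r_0)) < \varepsilon$ yields a line $L'$ such that $\Gamma \cap B(z_0, M'r_0)$ lies within Hausdorff distance $4\varepsilon M'r_0$ of $L' \cap B(z_0, M'r_0)$, together with a connected component $\Gamma_0$ of $\Gamma \cap B(z_0, M'r_0)$ of diameter at least $(1-\varepsilon)M'r_0$. The heart of the argument is a lower bound on the angle $\alpha$ between $L$ and $L'$. Since $x \in \Gamma \cap B(z_0, M'r_0/16)$, we have $\dist(x, L') \leq 4\varepsilon M'r_0$. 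On the other hand, $r_0 \geq r_z$ together with the maximality of $r_z$ on $(x,y)$ and the flatness forces $\dist(z_0, L') \gtrsim r_0$: if not, the $4\varepsilon M'r_0$-closeness of $L'$ to $\Gamma$ would produce a point in $(x,y)$ near $z_0$ whose distance to $\Gamma$ exceeds $r_z$, contradicting its maximality. Combining these two distance estimates over the short segment $[x, z_0] \subset L$ gives $\sin \alpha \geq 16(1 - O(\varepsilon M'))/M' > 10/M'$ for $\varepsilon$ sufficiently small.

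With this angle bound, I select $a, b \in \Gamma_0 \cap B(z_0, M'r_0/4)$ with $|a-b| \gtrsim r_0$, $\Pi(a) \in (x, y)$, and $\Pi(a) > \Pi(b)$; the large diameter of $\Gamma_0$ and its closeness to $L'$ make such a choice possible by orienting the pair along the piece of $L'$ whose projection onto $L$ lies on the $y$-side of $x$. Conditions (a)--(c) and (e) of Proposition~\ref{thm:prop} are then immediate, while (d) follows because $a, b$ lie within $4\varepsilon M'r_0$ of $L'$ and are well-separated, so the secant $L_{a,b}$ is nearly parallel to $L'$ (up to an angular error of $O(\varepsilon M'r_0/|a-b|) = O(\varepsilon M')$), and therefore the slope $|\dist(a,L) - \dist(b,L)|/|a-b|$ stays within $O(\varepsilon M')$ of $\sin \alpha$, exceeding $10/M'$. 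The existence of such $a,b$ contradicts the preceding corollary, completing the argument.

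The main obstacle is the lower bound $\dist(z_0, L') \gtrsim r_0$: while the maximality of $r_z$ and the flatness together suggest it, a careful argument requires using that any portion of $(x,y)$ close to $z_0$ and inside $B(z_0, M'r_0)$ would be forced too close to $\Gamma$ (via its proximity to $L'$) if $L'$ approached $z_0$, contradicting $r_0 \geq r_z$. One must check that the intersection $(x,y) \cap B(z_0, M'r_0)$ contains a neighborhood of a point realizing a distance $\geq r_z$ to $\Gamma$, which is built into the setup of Case 3 of Lemma~\ref{thm:thefinallemma} where this corollary is applied.
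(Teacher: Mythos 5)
Your overall strategy — manufacture a pair $(a,b)\in\Gamma$ satisfying (a)--(e) of Proposition~\ref{thm:prop} and so contradict the preceding corollary — matches the paper's, but the particular pair you go after introduces a genuine gap. The paper pairs $b=x$ with a point $a=\xi\in\Gamma$ that projects (roughly) onto $z_0$, so that $\dist(\xi,L)\geq \dist(z_0,\Gamma)$ and $\dist(x,L)=0$ make condition (d) essentially immediate once $|\xi-x|$ is controlled. You instead first try to bound the angle between $L$ and $L'$, and then choose $a,b$ along $L'$; this detour is workable in principle, but it stands or falls on the lower bound $\dist(z_0,L')\gec r_0$.

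That lower bound is not established, and your stated justification for it is logically inverted. By flatness, $\dist(z_0,L')=\dist(z_0,\Gamma)+O(\ve M' r_0)=r_{z_0}+O(\ve M' r_0)$, and the corollary's hypotheses only give $r_{z_0}\leq r_z\leq r_0$ — a one-sided bound in the wrong direction, so $r_{z_0}$ (hence $\dist(z_0,L')$) could a priori be far smaller than $r_0$. Your claim that ``if not, the \dots closeness of $L'$ to $\Gamma$ would produce a point in $(x,y)$ near $z_0$ whose distance to $\Gamma$ exceeds $r_z$'' runs backwards: if $L'$ passed close to $z_0$ it would make $z_0$ \emph{closer} to $\Gamma$, i.e.\ $r_{z_0}$ small, which is perfectly compatible with the maximality of $r_z$ and contradicts nothing. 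Without $\dist(z_0,L')\gec r_0$, your angle estimate $\sin\alpha > 10/M'$ fails and with it condition (d). Note that the paper's own proof invokes $\dist(z_0,\Gamma)=r_0$ directly, which is really the case $r_0=r_{z_0}$ (e.g.\ $z_0=z$, $r_0=r_z$, the main case of application); the point you need is an input at this step, not something derivable from the stated hypotheses as you attempted.
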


\begin{proof}
Suppose $x\in B(z_{0},M'r_{0}2^{-4})$ (the case with $y$ is identical). Let $\xi$ be a point in $B(z_{0},M'r_{z_{0}})$ that projects onto $z_{0}$, so $|z_{0}-\xi|<2M'r_{0}\ve$. Then for $\ve$ small enough,
\[|\xi-x|\geq \dist(\xi,L)\geq \dist(z_{0},\Gamma)=r_{0},\]
\[\frac{\dist(\xi,L)-\dist(x,L)}{|\xi-x|}\geq \frac{r_{0}-0}{|\xi-z_{0}|+|z_{0}-x|}\geq \frac{r_{0}}{2M'r_{0}+M'r_{0}2^{-4}}>\frac{10}{M'}.\]
Hence $\xi$ and $x$ satisfy the conditions of Proposition \ref{thm:prop}, and there exists a point $z'\in [x,y]\cap B(z_{0},\frac{M'r_{z_{0}}}{2})\cap [x,y]$ that is at least $\frac{3}{2}r_{z_{0}}>r_{z}$ from $\Gamma$, contradicting the maximality of $r_{z}$. 
\end{proof}

Let $m$ be such that $B\in \cB_{m}$. 
There is 
\begin{equation} 
\xi_{k}\in B(z,2r_{z})\cap\Delta_{k}, \mbox{ where } 2^{-k}\leq r_{z}<2^{-k+1}.
\end{equation}
 
By Corollary \ref{thm:noxy} applied to $z_{0}=z$, $z_{k}=\Pi(\xi_{k})\in (x,y)$. 
Since $\beta(M'B)>\ve$ and $\xi_{k}\in B$, 
	there is a sequence $\{\xi_{j}\}_{j=m}^{k}$ such that 
	$\xi_{m}$ is the center of $B$, $\xi_{j}\in\Delta_{j}$ and $\xi_{j}\in B(\xi_{j-1},2^{-j})$ and furthermore,
there must be a smallest ball $B(\xi_{j},2^{-j})$, $m\leq j \leq k$ such that $\beta(B(\xi_{j},M'2^{-j}))>\ve$ and $x,y\not\in B(\xi_{j},2^{-j})$. Let $z_{i}$ denote the projection of $\xi_{i}$ onto $L$ for $j\leq i\leq k$, which are contained in $(x,y)$ again by Corollary \ref{thm:noxy}.\\

\begin{corollary}
$|\xi_{i}-z_{i}|<2^{-i+2}$ for $j\leq i\leq k$. 
\end{corollary}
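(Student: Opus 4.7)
The plan is to prove the bound by downward induction on $i$ from $k$ to $j$. Observing that $z_i = \Pi(\xi_i)$ is the orthogonal projection of $\xi_i$ onto $L$, the desired inequality $|\xi_i - z_i| < 2^{-i+2}$ is just $\dist(\xi_i, L) < 2^{-i+2}$, which is a very mild constraint that grows as $i$ decreases.

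For the base case $i=k$, I would use that $\xi_k \in B(z, 2r_z) \cap \Delta_k$ with $z \in L$ and $r_z < 2^{-k+1}$, so
\[\dist(\xi_k, L) \;\leq\; |\xi_k - z| \;<\; 2r_z \;<\; 2^{-k+2}.\]
For the inductive step, suppose $\dist(\xi_i, L) < 2^{-i+2}$ for some $j < i \leq k$. By construction of the sequence $\{\xi_i\}$, we have $\xi_i \in B(\xi_{i-1}, 2^{-i})$, hence $|\xi_i - \xi_{i-1}| < 2^{-i}$. The triangle inequality then yields
\[\dist(\xi_{i-1}, L) \;\leq\; |\xi_{i-1} - \xi_i| + \dist(\xi_i, L) \;<\; 2^{-i} + 2^{-i+2} \;=\; 5 \cdot 2^{-i} \;<\; 2^{-(i-1)+2},\]
which is exactly the bound at level $i-1$, closing the induction.

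There is no real obstacle here: the coarser net positions at scale $2^{-(i-1)}$ remain close to $L$ because each step in the telescoping sum $\sum_{\ell > i-1} 2^{-\ell}$ only contributes $2^{-\ell}$, and the slack factor of $4$ in the exponent $2^{-i+2}$ easily absorbs the single-step jump $2^{-i}$ when we pass from $i$ to $i-1$. The corollary is really just a bookkeeping lemma that records how far the chain $\xi_m, \xi_{m+1}, \dots, \xi_k$ can stray from the line $L$ given that its finest element $\xi_k$ lies near $z \in L$.
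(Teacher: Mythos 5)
Your proof is correct and is genuinely different from the paper's. You proceed by a direct downward induction: from $\dist(\xi_i,L)<2^{-i+2}$ and $|\xi_{i-1}-\xi_i|\lec 2^{-i}$ you get $\dist(\xi_{i-1},L)<2^{-i}+2^{-i+2}<2^{-(i-1)+2}$, and the factor $4$ of slack built into the bound $2^{-i+2}$ absorbs the single-step jump at each scale, closing the induction. The paper argues instead by contradiction: assuming $\dist(\xi_{i-1},L)\geq 2^{-(i-1)+2}$, it shows that the pair $\xi_{i-1},\xi_i$ satisfies the hypotheses of Proposition \ref{thm:prop}, which produces a point $z'\in (x,y)$ with $\dist(z',\Gamma)>\tfrac{3}{2}2^{-i}>r_z$, contradicting the maximality of $r_z$.

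The distinction is more than cosmetic. The paper's inductive step quietly carries the uniform-in-$i$ estimate $|\xi_i-z_i|\leq 2r_z$ (it appears explicitly in the line $|\xi_{i-1}-z_i|\leq 2^{-i+1}+2r_z$ and is re-used in the subsequent Claim as $\dist(\xi_{j+1},L)\leq 2r_z$), and this uniform bound is exactly what the appeal to Proposition \ref{thm:prop} buys: the flatness $\beta(B(z_i,M'2^{-i}))<\ve$ at every intermediate scale keeps the chain pinned within $O(r_z)$ of $L$, whereas the naive telescoping of triangle inequalities accumulates increments $\sum_{i>j}2^{-i}\sim 2^{-j}\gg r_z$ and can only give the scale-dependent bound $2^{-i+2}$. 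Your argument delivers precisely the scale-dependent bound, which is what the corollary literally asserts; checking the subsequent Claim shows that $\dist(\xi_{j+1},L)<2^{-j+1}$ is in fact sufficient there (the numerator $2^{-j+3}-\dist(\xi_{j+1},L)$ stays comparable to $2^{-j}$ either way), so your proof is a valid and more elementary substitute for the statement as written, even though it does not reproduce the sharper uniform control the paper's contradiction argument secretly establishes.
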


\begin{proof}
Let $\xi_{j}'\in\Gamma$ be such that $|\xi_{j}'-z_{j}|=r_{z_{j}}<r_{z}$. We prove this by induction. If $j=k$, then 
\[|\xi_{k}-z_{k}|\leq \sqrt{|\xi_{k}-z_{k}|^{2}+|z_{k}-z|^{2}}=|\xi_{k}-z|<2r_{z}<2^{-k+2}.\]
Suppose the claim is true down to some $i>j$ but not true for $i-1$, that is,
\begin{equation}
|\xi_{i}-z_{i}|\geq 2^{-(i-1)+2}=2^{-i+3}.
\label{e:xi_i-z_i}
\end{equation}
Then since $\xi_{i-1},\xi_{i}\in \Delta_{i}$,
\[|\xi_{i-1}-\xi_{i}|\geq 2^{-i}.\]

\noindent Furthermore,
\begin{align*}
|\xi_{i-1}-z_{i}| & 
\leq |\xi_{i-1}-\xi_{i}|+|\xi_{i}-z_{i}|
<2^{-i+1}+2^{-k+2}\\
&<2^{-i+1}+2^{-i+2}
=2^{-i+3}.
\end{align*}
{
Here, the second inequality followed by our induction hypothesis that $|\xi_{i}-z_{i}|<2^{-i+2}$.}
Hence $\xi_{i-1},\xi_{i}\in B(z_{i},\frac{M'2^{-i}}{4})$ if we pick $M'>2^{5}$. Furthermore, by the choice of $j$, since $i>j$, we know $\beta(B(z_{i},M'2^{-i}))<\ve$, and by how we chose $\xi_{i}$ and $z_{i}$,
\[|z_{i}-z_{i-1}|\leq |\xi_{i}-\xi_{i-1}|\leq 2^{-i+1},\]
we have
{
\begin{multline*}
\frac{\dist(\xi_{i-1},L)-\dist(\xi_{i},L)}{|\xi_{i-1}-\xi_{i}|} 
=\frac{|\xi_{i-1}-z_{i-1}|-|\xi_{i}-z_{i}|}{|\xi_{i-1}-\xi_{i}|}\\
\geq \frac{2^{-i+3}-2^{-i+2}}{2^{-i+1}}  \geq \frac{2^{-i+1}}{2^{-i+1}}
=1.
\end{multline*}
Here we used \eqref{e:xi_i-z_i} and the induction hypothesis that $|\xi_{i}-z_{i}|<2^{-i+2}$. } Hence $\xi_{i-1},\xi_{i}$ satisfy the the conditions of Proposition \ref{thm:prop}, so there is a $z'\in B(z_{i},\frac{M'2^{-i}}{2})$ with $r_{z'}>\frac{3}{2}2^{-i}>r_{z}$, a contradiction 
{
since $z$ was picked so that $r_{z}$ was maximal}. The second inequality in the corollary follows from the way we picked $k$. 
\end{proof}

\noindent \Claim
\[(B(z_{j},2^{-j+4}\csc\phi)\backslash B(z_{j},2^{-j+3}\csc\phi))\cap \Gamma\subseteq V_{1}(z_{k}).\]
\begin{proof}
We assume $j<k$, since the case of $j=k$ can be proven in a similar manner. Suppose there is a point $\xi\in\Gamma$ in the annulus but outside the cone. Then 

\begin{equation}
\dist(\xi,L)\geq 2^{-j+3}\csc\phi\sin\phi=2^{-j+3} .
\label{eq:outofcone}
\end{equation}

\noindent Then, for $M'>40(2^{4}\csc\phi+3)$, and since our choice of $\phi$ gives $\csc\phi>2$,
\begin{multline*}
|\xi-\xi_{j+1}|\geq |\xi-z_{j+1}|-|z_{j+1}-\xi_{j+1}|
\\
\geq 
2^{-j+3}\csc\phi-2^{-j+3}\geq 2^{-j+4}-2^{-k+3}\geq 2^{-j+3},
\end{multline*}

\[|\xi-z_{j+1}|\leq 2^{-j+4}\csc\phi<\frac{M2^{-j-1}}{4},\]
and
\begin{align*}
\frac{\dist(\xi,L)-\dist(\xi_{j+1},L)}{|\xi-\xi_{j+1}|} & 
\geq \frac{2^{-j+3}-2r_{z}}{|\xi-z_{j}|+|z_{j}-z_{j+1}|+|z_{j+1}-\xi_{j+1}|}\\
& \geq \frac{2^{-j+3}-2^{-k+2}}{2^{-j+4}\csc\phi +2^{-j}+r_{z}}
\geq \frac{2^{-j-2}}{2^{-j}(2^{4}\csc\phi+1+2)}\\
&\geq \frac{10}{M'}.
\end{align*}

\noindent Hence $\xi,\xi_{j+1}\in B(z_{j+1},\frac{M'2^{-j-1}}{4})$ and $\beta(B(z_{j+1},M'2^{-j-1}))<\ve$ by our choice of $j$, thus $\xi$ and $\xi_{j+1}$ satisfy conditions (a)-(e) of the proposition, which is a contradiction, giving us the claim.
\end{proof}

Now, fix $k'$ so that $2^{-k'}\leq \frac{1}{2}|1-e^{i\phi}|<2^{-k'+1}$. Then we may find points $\eta_{i}\in \Delta_{j+k'}$, $i=1,2$, on either component of the cone $V_{2}(z_{j})$ such that 
\[B(\eta_{i},M2^{-j-k'})\supseteq B(\xi_{j},M'2^{-j}),\] 
which is true if we fix $M>M'2^{k'}$. Then

\[ \beta(B(\eta_{i},M2^{-j-k'}))>\frac{M'2^{-j}}{M2^{-j-k'}} \beta (B(\xi_{j},M'2^{-j}))>\delta\ve\]

\noindent for $i=1,2,$ if we pick $\delta<\frac{M'}{M2^{-k'}}$. Hence the balls $B(\eta_{i},2^{-j-k'})$ are \typetwo and satisfy \star (so long as we pick $C>\frac{2\cdot 2^{-j+4}\csc\phi}{2^{-(j+k')}}$) since both points are in $B(z_{j},2^{-j+2})$, hence there are $\eta_{i}' \in B(\eta_{i},M\ve 2^{-j-k'})$ that are contained in the two components of $V_{2}(z_{j})$ that are connected by a chord-arc path $p\subseteq\tilde{\Gamma}$ with length $\lec 2^{-j}$ as desired. (See Figure \ref{fig:case423}.) 
This follows from our choice of $k'$ and that 
\begin{align*}
\dist(\eta_{i}',\Gamma   & \cap (B(z_{j},2^{-j+4}\csc\phi)\backslash B(z_{j},2^{-j+3}\csc\phi)))
 \leq \dist(\eta_{i}',\eta_{i})+2^{-j-k'}\\
& < (M\ve+1) 2^{-j-k'}
 <2\cdot 2^{-j}\frac{1}{2}|1-e^{i\phi}| 
 =2^{-j}|1-e^{i\phi}|
\end{align*}

\noindent hence $\eta_{i}-z_{j}$ makes an angle of no more than $2\phi$ with $L$.

\pic{4.5in}{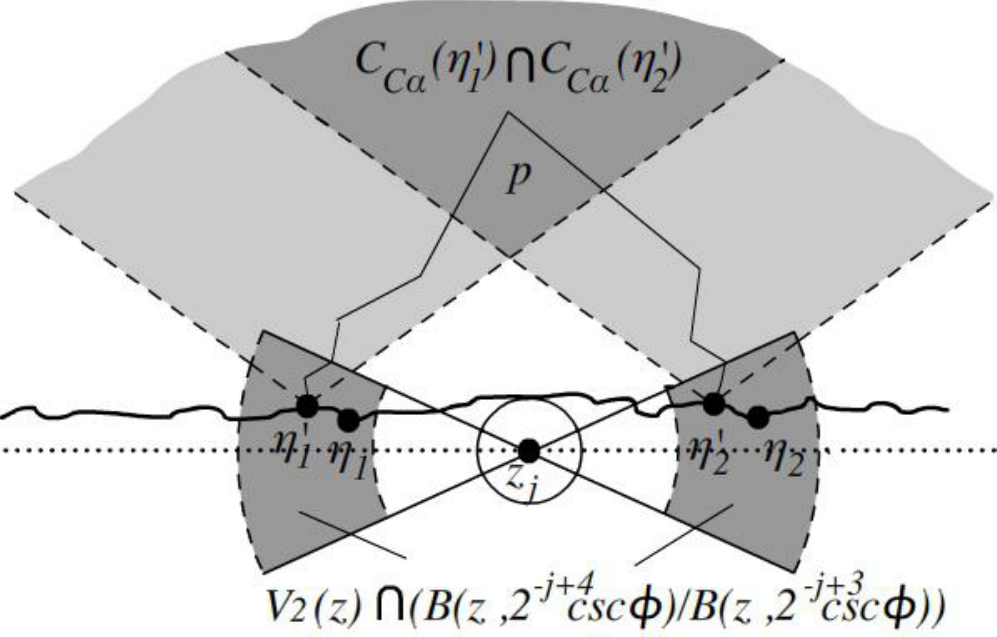}{The $\eta_{j}$ satisfy \star since $\beta(\xi_{j+1},M'2^{-j-1})<\ve$.}{case423}

\subsubsection{Case 2.b.ii: $\beta(B(z,M'r_{z}))\geq \ve$}

\Claim There exist $\xi_{\pm}\in \Gamma\cap V_{1}(z)\cap H_{z}^{\pm} \cap B(z,\alpha r_{z})$ with $z\in C_{\alpha}(\xi_{j})$. 

\begin{proof}
It suffices to find $\xi=\xi_{+}$, since the proof is the same for $\xi_{-}$. Suppose there was no such point. Let $\xi\in \Gamma\cap V_{1}(z)\cap \cH_{z}^{+}\cap B(z,\alpha r_{z})$ be closest to $z_{j}$. Then since $z$ cannot be in $C_{\alpha}(\xi_{j})$,
\[|z-\xi|\geq \alpha \dist(z,\Gamma)=\alpha r_{z},\]
thus $\Gamma\cap V_{1}(z)\cap H_{z}^{+} \cap B(z,\alpha r_{z})=\emptyset$. For large enough $\alpha$, however, we may find another ball centered on $\cH_{z}^{+}\cap [x,y]$ with radius larger than $r_{z}$, which is a contradiction (see Figure \ref{fig:alphacone}).
\end{proof}

\pic{4.5in}{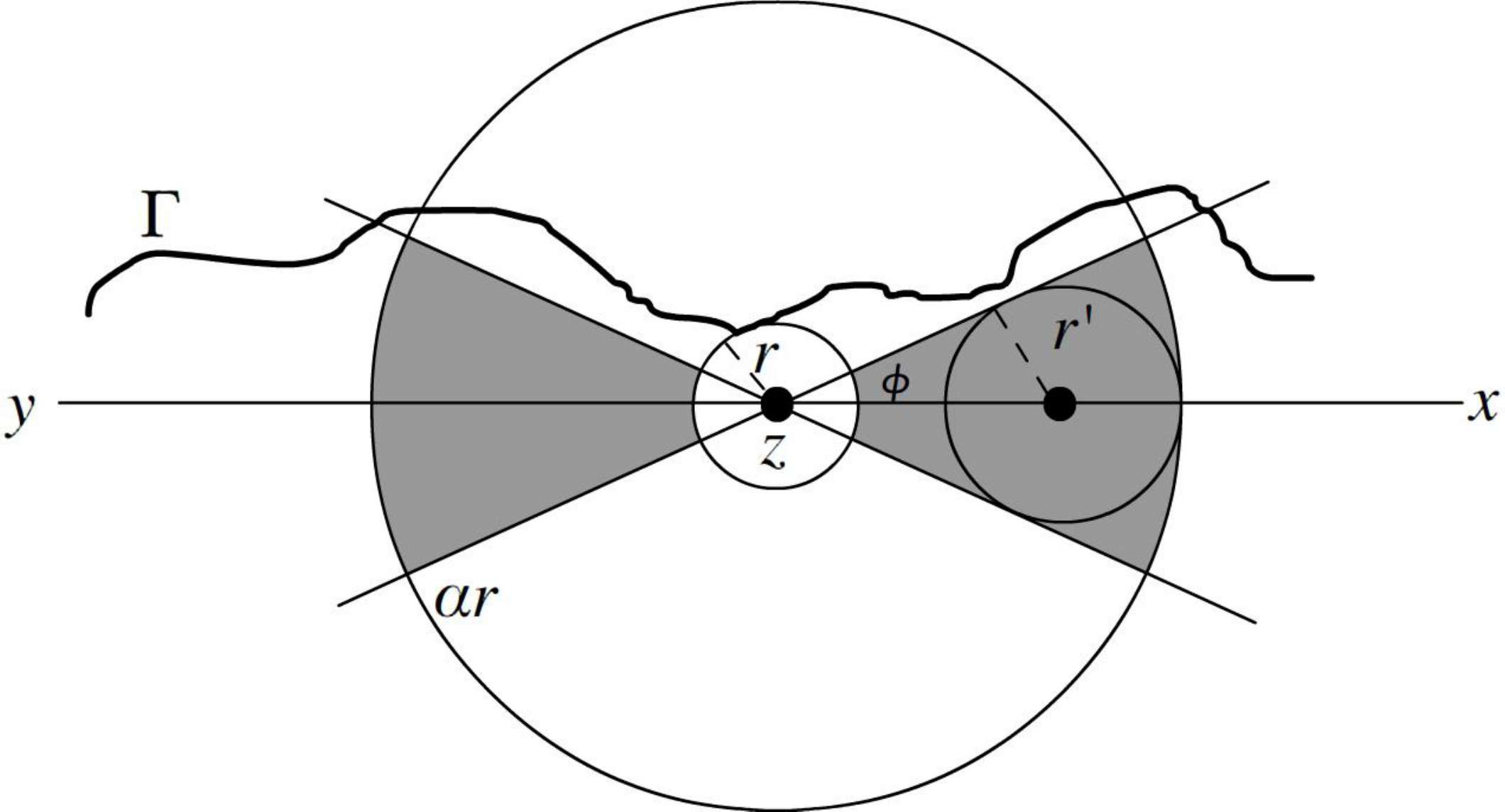}{If $\Gamma\cap V_{1}(z)\cap H_{z}^{+} \cap B(z,\alpha r_{z})=\emptyset$, for large $\alpha$ we may find a ball in the cone with radius larger than $r_{z}$ and disjoint from $\Gamma$.}{alphacone}

Let $\xi_{+}$ be the point in $\Gamma\cap V_{1}(z)\cap \cH^{+}\cap B(z,\alpha r_{z})$ closest to $z$ that satisfies the claim. Identify the two-dimensional plane containing $x,z,$ and $\xi_{+}$ with $\bC$ so that $z=0$, $x>0$, and $\Im \xi_{+}\geq 0$. 

We may pick $C'$ large enough so that $C_{C'\alpha}(\xi_{+})$ contains the path $[\xi_{+},t]\cup[t,z]$, where $t\in\bR$ is such that $\arg(\xi_{+}-t)\geq \frac{\pi}{4}$, and then $C''$ larger so that  $B(z,\frac{r_{z}}{2})\subseteq C_{C''\alpha}(\xi_{+})$.
Note that  we may pick $C''$ independent of $\xi_{+}$ and $z$.

Pick $x'\in B(\xi_{+},2^{-k'})\cap \Delta_{k'}$, where $k'=k'(r_{z})$ is now the integer such that
\[2^{-k'}\leq \frac{1}{2}|r_{z}e^{i\phi}-r_{z}e^{i2\phi}|=\frac{1}{2}r_{z}|1-e^{i\phi}|<2^{-k'+1}.\]

\noindent We may find $y'$ and a $\xi_{-}$ satisfying similar conditions in $\cH_{z}^{-}$, and clearly $|x'-y'|<C''' 2^{-k'}$ for some constant $C'''$.

\pic{4.5in}{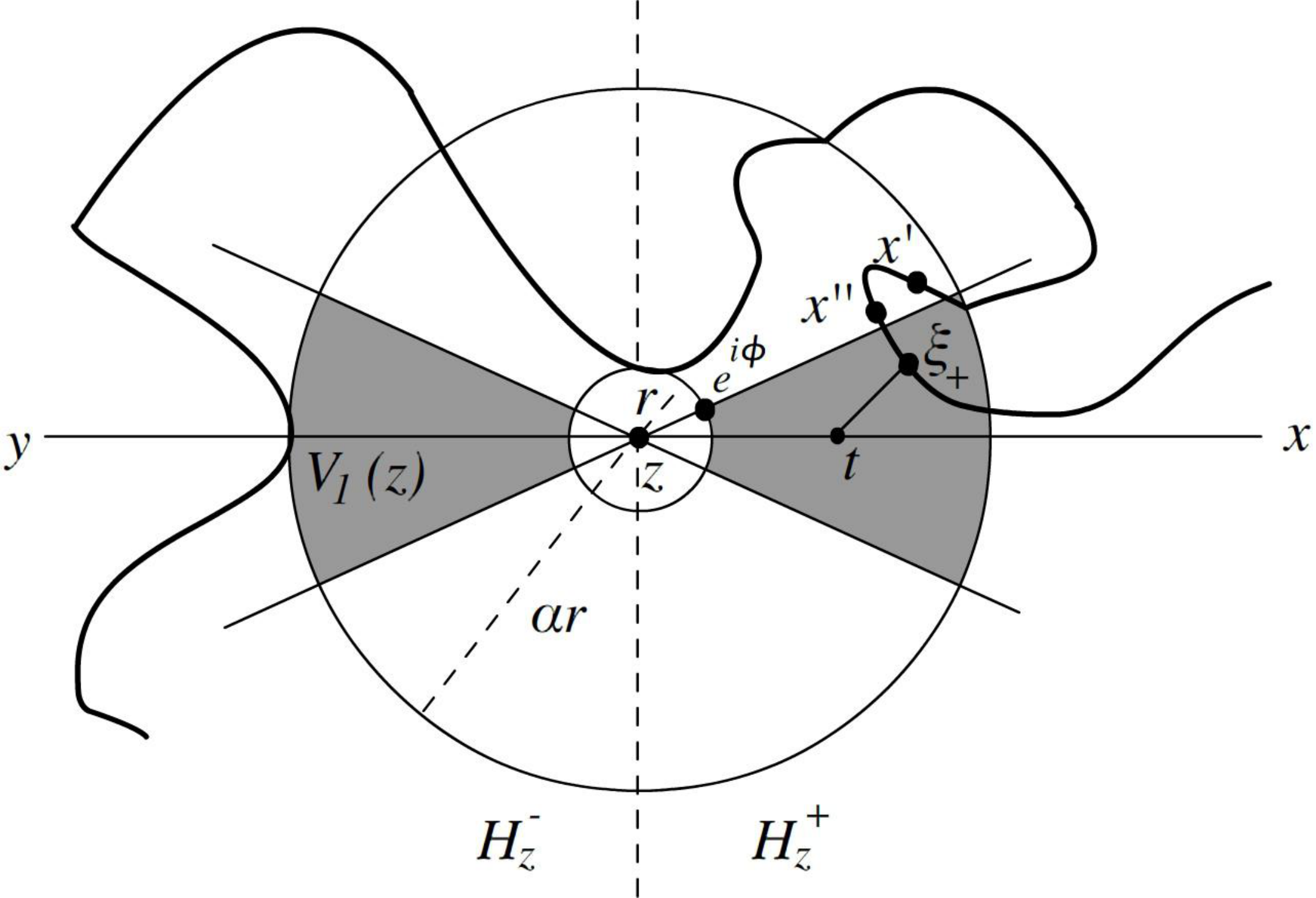}{}{cones}

Now fix $k_{1}$ so that there is $w\in B(z,\frac{r_{z}}{2})\cap \cN_{k'+k_{1}}'$ (recall the relationship between $k'$ and $r$, so we can pick $k_{1}$ independent of these quantities). Then we may connect $\xi_{+}$ to $w$ via the path $[\xi_{+},t]\cup [t,z]\cup [z,w]$, which has length $\lec 2^{-k'}$, and a similar path can be found for $\xi_{-}$. Hence $x'$ and $y'$ satisfy the condition \star if we pick $C>\max\{C'',C'''\}$. Also, 
\[\beta(B(x',M2^{-k'}))\geq \frac{M'r_{z}}{M2^{-k'}}\beta(B(z,M'r_{z}))>\delta\ve\]

\noindent so $B(x',2^{-k'})$ is \typetwo if we pick $\delta<\frac{4M'}{M|1-e^{i\phi}|}$). Hence, we know that there are $x''\in B(x',2^{-k'})$ and $y''\in B(y'',2^{-k'})$ that are connected by a polygonal path of length $\lec 2^{-k'}$, and our choice of $k'$ guarantees that $x'',y''\in V_{2}(z)\cap H_{z}^{+}$, as desired.

\end{proof}

\subsection{Putting it all together}
Using the above lemma, we can fully describe the construction of the curve connecting $x$ and $y$. We run $[x,y]$ through our schema to get a new curve $\gamma_{1}$, which is a union of segments $S_{1}$ and a set $P_{1}$ that satisfy
\[\sigma_{1}=\sum_{s\in S_{1}}\ell(s)<|x-y|\]
and
\[\cH^{1}(P_{1})\lec |x-y|-\sigma_{1}.\]

On each segment $s\in S_{1}$, we replace each of them with new paths $\gamma_{s}$, which are unions of segments $s'\in S_{s}$ and a set $P_{s}$ such that
\[\sigma_{s}=\sum_{s'\in S_{s}}\ell(s')\leq \ell(s)\]
and
\[\cH^{1}(P_{s})\leq C_{1}(\ell(s)-\sigma_{s}).\]

After replacing each of these segments, we form a new path $\gamma_{2}$ connecting $x$ and $y$ that is a union of segments $S_{2}$ and a set $P_{2}=P_{1}\cup \bigcup_{s\in S_{1}}P_{s}$ such that
\begin{align*}
\sigma_{2} & =\sum_{s\in S_{2}}\ell(s)=\sum_{s\in S_{1}}\sum_{s'\in S_{s}}\ell(s') \leq \sum_{s\in S_{1}}\ell(s)<|x-y|
\end{align*}

\noindent and
\begin{align*} \cH^{1}(P_{2})& 
\leq \cH^{1}(P_{1})+\sum_{s\in S_{1}}\cH^{1}(P_{s})
 \leq C_{1}(|x-y|-\sigma_{1})+\sum_{s\in S_{1}}C_{1}(\ell(s)-\sigma_{s}) \\
& = C_{1}|x-y|-C_{1}\sum_{s\in S_{1}}\sigma_{s}=C_{1}\ps{|x-y|-\sum_{s\in S_{2}}\ell(s)}\\
& = C_{1}(|x-y|-\sigma_{2}).
\end{align*}

Inductively, we may construct a sequence of curves $\gamma_{n}$ such that each $\gamma_{n}$ is a union of segments $S_{n}$ and a set $P_{n}$ such that
\[ \sigma_{n}:=\sum_{s\in S_{n}}\ell(s)<|x-y|\]
and
\[\cH^{1}(P_{n})\leq C_{1}(|x-y|-\sigma_{n}).\]

Hence, they converge to a Lipschitz curve $\gamma$ that is contained in $\tilde{\Gamma}$ that satisfies $\ell(\gamma)\leq (C_{1}+1)|x-y|$. This combined with Lemma \ref{thm:wlog} and Lemma \ref{thm:length} finishes the proof of the main theorem.

\bibliographystyle{amsalpha}
\bibliography{reference}

\end{document}